\documentclass[12pt]{article}
\usepackage{graphicx,psfrag,epsf}
\usepackage{natbib}
\usepackage{url}

\usepackage[margin = 2.3cm]{geometry}

\usepackage{amssymb, amsthm, amsmath, ascmac, authblk, booktabs, comment, enumerate, latexsym, lscape, longtable, mathrsfs, mathtools, multirow, newtxtext, psfrag, setspace, thmtools}
\usepackage[stable]{footmisc}

\usepackage{enumitem}

\usepackage{color}
\usepackage{pgfplots}
\pgfplotsset{compat=1.16}
\usepackage{here}
\usepackage{multirow}
\usepackage{caption}
\usepackage[subrefformat=parens]{subcaption}
\captionsetup{compatibility=false}
\usepackage[colorlinks=true, citecolor=teal, linkcolor=teal, urlcolor=teal, backref=page]{hyperref}
\usepackage{threeparttable}
\usepackage{stackengine}
\usepackage{tikz}
\usetikzlibrary{arrows.meta}
\usetikzlibrary{bending}

\usepackage{lineno}

\usepackage[capitalize]{cleveref}

\newtheorem{theorem}{Theorem}
\newtheorem{lemma}{Lemma}
\newtheorem{corollary}{Corollary}
\newtheorem{assumption}{Assumption}

\theoremstyle{definition}
\newtheorem*{rmk*}{Remark}
\newtheorem{rmk}{Remark}


\DeclareMathOperator{\Var}{Var}
\DeclareMathOperator{\Cov}{Cov}
\DeclareMathOperator{\E}{\mathbb{E}}
\DeclareMathOperator{\diag}{diag}

\newcommand{\ul}[1]{\underline{#1}}

\newcommand{\mcl}[1]{\mathcal{#1}}

\newcommand{\bra}[1]{\left(#1\right)}

\newcommand{\sbra}[1]{\left[#1\right]}
\newcommand{\abs}[1]{\left|#1\right|}

\newcommand{\eps}{\varepsilon}
\newcommand{\pr}{\mathbb{P}} 
\newcommand{\symg}{\mathbb S} 

\allowdisplaybreaks[4]

\begin{document}
\baselineskip=18pt

\title{Gaussian Approximation for High-Dimensional Second-Order $U$- and $V$-statistics with Size-Dependent Kernels under i.n.i.d.~Sampling}

\author[1]{Shunsuke Imai \thanks{This research was supported by JSPS KAKENHI Grant Number 24KJ1472. I am grateful to  Yuta Koike for numerous invaluable comments on many aspects of this paper. All errors are entirely my own.
}}

\affil[1]{Graduate School of Economics, Kyoto University}

\maketitle

\begin{abstract}
    We develop Gaussian approximations for high‑dimensional vectors formed by second‑order $U$- and $V$‑statistics whose  kernels depend on sample size under independent but not identically distributed (i.n.i.d.) sampling. 
    Our results hold irrespective of which component of the Hoeffding decomposition is dominant, thereby covering both non-degenerate and degenerate regimes as special cases.
    By allowing i.n.i.d.~sampling, the class of statistics we analyze includes weighted $U$- and $V$-statistics and two-sample $U$- and $V$-statistics as special cases, which cover estimators of parameters in regression models with many covariates, many-weak instruments as well as a broad class of smoothed two-sample tests and the separately exchangeable arrays, among others.
    In addition, we develop maximal inequalities for high-dimensional $U$-statistics with size-dependent kernels under the i.n.i.d.~setting, in a form that remains sharp across a broad range of applications, which may be of independent interest.
\vspace{2mm}

\noindent \textit{Keywords}: 
Adaptive test; 
high-dimensional central limit theorem; 
many covariates;
many-weak instrumental variables;
maximal inequalities;
separately exchangeable arrays;
$U$-statistics.   
\end{abstract}

\newpage
\section{Introduction} \label{sec:intro_inid}

In modern applications, the number of target parameters of statistical inference can be large and one might wish to conduct simultaneous inference on such high-dimensional parameters, as noted by many authors (e.g. \citealp{belloni2018high,CCKK22,chernozhukov2023high}).
For example, such situations arise when there are many outcomes, groups or time points (or combination thereof), and parameters are estimated for each outcome, group, and time point \citep[Section 1.1]{belloni2018high}; when economic theory implies a large number of testable conditions \cite[e.g.][]{chernozhukov2019inference}; or when one seeks to conduct uniform inference over tuning parameters for the purpose of adaptive inference \citep{horowitz2001adaptive}, sensitivity analysis or robustness check \citep{armstrong2018simple} or post-selection inference \citep{kuchibhotla2022post}. 

As a useful tool for such problems, \cite{CCK13} established Gaussian approximation for the sum of high-dimensional independent random vectors. 
Notably, their approximation is valid even when the dimension $p$ is much larger than the sample size $n$ and does not  restrict correlations of the coordinates of the random vectors. 
Since then, many authors have extended the remarkable result to several directions including $U$-statistic type statistics  \citep{chen2018gaussian,ChKa19,ChKa20,song2019approximating,song2023stratified,cheng2022gaussian,chiang2023inference,koike2023high,imai2025gaussian}. 
Among them, the most closely related work to ours is \cite{imai2025gaussian}.
In \cite{imai2025gaussian}, they provide Gaussian approximations for high‑dimensional $U$-statistics with size-dependent kernels under i.i.d.~sampling.
The point of their results is that the approximations remain valid regardless of which term in the Hoeffding decomposition is dominant and even in the case such dominant component is absent.

In this study, we establish Gaussian approximations for high‑dimensional second‑order $U$- and $V$‑statistics with kernel functions which depend on sample size under independent but not identically distributed (i.n.i.d.)~sampling.
Our bounds on Gaussian approximation errors are analytical and explicitly express the dependence on the dimension $p$ and remain valid regardless of whether the first‑ or second‑order Hoeffding component dominates or such dominant component does not exist.
As special cases, we allow non-degenerate cases and degenerate cases with known degrees.

At first glance, our results may appear to be a straightforward extension of \citet{imai2025gaussian} from i.i.d.~to i.n.i.d.~sampling.
However, allowing i.n.i.d.~observations substantially enlarges the scope of high-dimensional Gaussian approximation for $U$- and $V$-statistics. 
Beyond permitting heterogeneous marginal distributions across observations, the resulting $U$- and $V$-statistics have important subclasses such as weighted $U$-statistics and two-sample $U$-statistics as shown in \cref{subsec:motivation-inid}. 
These subclasses arise naturally in econometric applications. 
In \cref{subsec:motivation-inid}, we discuss representative examples of applications of weighted and two-sample $U$- and $V$-statistics with size dependent kernels (which is degenerate or whose Hoeffding dominant component does not always exist) to econometrics  including many-covariate asymptotics for linear regression estimators and series-based semiparametric estimators \citep{cattaneo2018alternative} and many-weak instrument asymptotics  \citep{chao2012asymptotic}, adaptive two-sample homogeneity testing \citep{li2024optimality}, and an inference framework for  separately exchangeable arrays \citep{chiang2023using}.
In the same section, we also discuss how high dimensionality can arise in these settings and why this motivates our Gaussian approximation.

On the technical side, we build on several lemmas and parts of the proof of Gaussian approximations in \citet{imai2025gaussian} while making suitable modifications of proof and developing new auxiliary results to accommodate the i.n.i.d.~sampling. 
In particular, we directly utilize their high-dimensional central limit theorem via exchangeable pairs \citep[Theorem 6]{imai2025gaussian} as the starting point for our analysis of Gaussian approximation error.
Also, to evaluate some quantities in the normal approximation error, we directly use their maximal inequalities for high-dimensional martingales \citep[Lemma 1]{imai2025gaussian}, with due care in the construction of the filtration.
However, in evaluating the Gaussian approximation error and deriving the primitive conditions for applications, \citet{imai2025gaussian} make substantial use of auxiliary results by \citet{DoPe19} in conjunction with the original lemmas by \cite{imai2025gaussian}, many of which can be applied only in the i.i.d.~setting and/or when the kernel function of $U$-statistics do not depend on the index. 
Accordingly, we reconstruct parts of the argument and several auxiliary lemmas tailored to the i.n.i.d.~setting.
In the i.n.i.d.~case, the marginal laws vary across observations, so simplifications using the exchangeability available under i.i.d.~sampling no longer apply.
In particular, the lack of a common distribution prevents usual simplifications of multi-index summations, and arguments of conditional expectation require delicate choices of conditioning $\sigma$-fields and filtrations, for the analysis of $U$-statistics.
Among them, the most important technical feature of this paper is that, both in deriving the Gaussian approximation error bounds and in developing the maximal inequalities for general-order $U$-statistics, we keep explicit the summations over certain sample indices to preserve sharpness of bounds on  Gaussian approximation error and the concentration of $U$-statistics for a broad class of applications, including many-covariate asymptotics for linear regression and series-based semiparametric estimators and many-weak instrumental variable asymptotics, where certain properties of the sum of elements in projection matrices plays important roles to derive the convergence rates of some quantities. 
Replacing these sums by maxima over sample indices would enable us to mimic the proof by \cite{imai2025gaussian} and give simpler results, but such simplification should lead to too rough bounds for  applications. 
As an auxiliary result for the purpose, we extend the $L_1$-maximal inequality for high-dimensional non-negative adapted sequences used in \citet{imai2025gaussian} to general $L_q$ norms and use it to establish maximal inequalities for general-order $U$-statistics under i.n.i.d.~sampling.
See \cref{sec:main_maximal} for details.

\paragraph{Organization:}
The rest of the paper is organized as follows.
Sections \ref{sec:notation-inid} and \ref{sec:main-inid} introduce the formal setup and state the main theoretical results, respectively.
\cref{sec:main_maximal} provides two maximal inequalities for high-dimensional general order $U$-statistics from i.n.i.d.~observations and an evaluation on the maximum of high-dimensional bilinear form, which may be of independent interest.
Section \ref{subsec:motivation-inid} discusses several concrete examples of potential applications.
\cref{sec:auxiliary-inid} presents the key building blocks of the proofs of main results, and the proofs of main results are in \cref{sec:Proof-main-results-inid}.
\cref{sec:proof_application} provides the proof of an auxiliary result (\cref{lem:gluing_separable_array}) for Gaussian approximation of the high-dimensional separately exchangeable arrays used in \cref{exm:separate}.
In \cref{sec:proof-auxiliary-inid}, we prove the auxiliary results.

\paragraph{Notations:} 
For a positive integer $m$, we write $[m]:=\{1,\dots,m\}$. We also set $[0]:=\emptyset$ by convention. 
Given a vector $x\in\mathbb R^p$, its $j$-th component is denoted by $x_j$. 
Also, we set $|x|:=\sqrt{\sum_{j=1}^px_j^2}$ and $\|x\|_\infty:=\max_{j\in[p]}|x_j|$. 
Given a $p\times q$ matrix $A$, its $(j,k)$-th entry is denoted by $A_{jk}$. Also, we set $\|A\|_\infty:=\max_{j\in[p],k\in[q]}|A_{jk}|$. 
$\mcl R_p$ denotes the set of all rectangles in $\mathbb R^p$. 
For a normed space $\mathfrak X$, its norm is denoted by $\|\cdot\|_{\mathfrak X}$. 
We denote the Frobenius norm and operator norm as $\|\cdot\|_F$ and $\|\cdot\|_{\text{op}}$, respectively.
We interpret $\max\emptyset$ as 0 unless otherwise stated. 
For two random variables $\xi$ and $\eta$, we write $\xi\lesssim\eta$ or $\eta\gtrsim\xi$ if there exists a \emph{universal} constant $C>0$ such that $\xi\leq C\eta$. 
Given parameters $\theta_1,\dots,\theta_m$, we use $C_{\theta_1,\dots,\theta_m}$ to denote positive constants, which depend only on $\theta_1,\dots,\theta_m$ and may be different in different expressions.

\section{Notation and Setting} \label{sec:notation-inid}
Given a probability space $(\Omega,\mcl A,\pr)$, let $X_1,\dots,X_n$ be independent random variables taking values in respective measurable spaces $(S_1,\mcl S_1), \dots, (S_n,\mcl S_n)$. 
Write $P_i$ for the marginal distribution of $X_i$.
For $r\geq1$ and each $(i_1, \dots, i_r)$, we say that a function $\psi_{(i_1, \dots, i_r)}:\prod_{s=1}^r S_{i_s}\to\mathbb R$ is \emph{symmetric} if $\psi_{(i_1, \dots, i_r)}(x_{1},\dots,x_{r})=\psi_{(i_{\sigma(1)}, \dots, i_{\sigma(r)})}(x_{\sigma(1)},\dots,x_{\sigma(r)})$ for all $(x_1,\dots,x_r)\in \prod_{s=1}^r S_{i_s}$ and $\sigma\in\symg_r$, where $\symg_r$ is the symmetric group of degree $r$. 
To lighten notation we write $\psi(X_{i_1}, \dots, X_{i_r})$ or $\psi_{(i_1,\dots, i_r)}$ for $\psi_{(i_1,\dots, i_r)}(X_{i_1}, \dots, X_{i_r})$ whenever no confusion can arise.
Let $\boldsymbol\psi \coloneqq \{\psi_{(i_1,\dots, i_r)}\}_{_{(i_1,\dots, i_r)\in I_{n,r}}}$ be  a family  of  $\otimes_{s=1}^r\mathcal{S}_{i_s}$-measurable symmetric functions $\psi_{(i_1, \dots, i_r)}:\prod_{s=1}^r S_{i_s}\to\mathbb R$, we define $r$-th order $U$-statistics as follows
\begin{align*}
    J_{r}(\boldsymbol\psi) \coloneqq \sum_{1\le i_1 < \cdots < i_r \le n} \psi(X_{i_1}, \dots, X_{i_r}) = \frac{1}{r!} \sum_{(i_1,\dots, i_r) \in I_{n,r}} \psi(X_{i_1}, \dots, X_{i_r}),
\end{align*}
where $I_{n,r}:=\{(i_1,\dots,i_r):1\leq i_1,\dots,i_r\leq n,~i_s\neq i_t\text{ for all }s\neq t\}$.
For $1\le s\le r-1$, we define a function $ \prod_{l=1}^{r-s} P_{k_l} \psi : \prod_{t=1}^{s} S_{i_t} \to \mathbb{R}$ as
\begin{align*}
    \prod_{l=1}^{r-s} P_{k_l}\psi (X_{i_1}, \dots,X_{i_s}) \coloneqq \int \psi(X_{i_1}, \dots, X_{i_s}, x_{k_1}, \dots, x_{k_{r-s}}) \prod_{l=1}^{r-s} dP_{k_l}(x_{k_l}),
\end{align*}
and when $r=s$, we define $\prod_{l=1}^{0} P_{k_l}$ as the identity function.
For a kernel $\psi_{(i_1,\dots, i_r)}$ with $\prod_{l=1}^rP_{i_l}\psi_{(i_1,\dots, i_r)} = 0$, we say $\psi_{(i_1,\dots, i_r)}$ is degenerate if $P_{i_l}\psi_{(i_1,\dots, i_r)} = 0 \text{ for every $1\le l \le r$}$.
For each $(i_1, \dots, i_s)$, define $K_{n,r-s}(i_1, \dots, i_s) \coloneqq \{1 \le k_1, \dots, k_{r-s} \le n, k_l \neq k_m \text{ for all } l\neq m \text{ and } k_l \notin \{i_1,\dots, i_s\} \text{ for all } 1\le l \le r-s\}$. 
Then, for $1\le s\le r$, Hoeffding projections are defined as follows
\begin{align*}
    & \pi_{s, (i_1, \dots, i_s)}\psi(X_{i_1}, \dots, X_{i_s}) \\
    & \coloneqq \sum_{(k_1,\dots, k_{r-s})\in K_{n,r-s}(i_1, \dots, i_s)}\left( \prod_{l=1}^s(\delta_{X_{i_l}} - P_{i_l}) \prod_{m=1}^{r-s} P_{k_m} \right) \psi(X_{i_1}, \dots, X_{i_s}, X_{k_1}, \dots, X_{k_{r-s}}).
\end{align*}
The Hoeffding decomposition of $J_r(\boldsymbol\psi)$ is given by
\begin{align*}
    J_r(\boldsymbol\psi) - \E[J_r(\boldsymbol\psi)] = \sum_{s=1}^r \frac{1}{s!} \sum_{(i_1, \dots, i_s)\in I_{n,s}}\pi_{s, (i_1, \dots, i_s)}\psi(X_{i_1}, \dots, X_{i_s}).
\end{align*}

\paragraph{Notations for Second-Order $U$-statistics:}
In this paper, we focus on Gaussian approximation in the case of $r=2$, though some auxiliary results for higher-order $U$-statistics are required (see \cref{sec:main_maximal}).
For reader's convenience, we restate, in the case of $r=2$, some definitions introduced above.
First, the second order $U$-statistics is given by
\begin{align*}
    J_{2}(\boldsymbol\psi) \coloneqq \sum_{1\le i_1 < i_2 \le n} \psi(X_{i_1}, X_{i_2}) = \frac{1}{2} \sum_{(i_1,i_2) \in I_{n,2}} \psi(X_{i_1},X_{i_2}).
\end{align*}
Then, for $s=\{1,2\}$, Hoeffding projections are defined as follows

\begin{align} \label{eq:Hoeffding-projections-2nd-inid}
\begin{split}
    & \pi_{1,i}\psi(X_i) = \sum_{m\neq i} (\delta_{X_i} - P_i)P_m \psi(X_i,X_m), \\
    & \pi_{2,im}\psi(X_i,X_m) =  (\delta_{X_i} - P_i)(\delta_{X_m} - P_m) \psi(X_i,X_m).
\end{split}
\end{align}
Finally, Hoeffding decomposition of $J_2(\boldsymbol\psi)$ is
\begin{align*}
    J_2(\boldsymbol\psi) - \E[J_2(\boldsymbol\psi)] = \sum_{i=1}^n \pi_{1,i}\psi(X_i) + \frac{1}{2}\sum_{(i,m)\in I_{n,2}}\pi_{2,im}\psi(X_i,X_m).
\end{align*}

\section{Main Results} \label{sec:main-inid}
In this section, we provide main results, that is, Gaussian Approximation for high-dimensional second-order $U$-statistics and $V$-statistics with size dependent kernels.
We prove theorems in this section in \cref{sec:Proof-main-results-inid}.

\subsection{Gaussian Approximation for High-dimensional $U$-statistics} \label{subsec:main-u-inid}
Let $p$ be a positive integer.
We assume $p\ge 3$ so that $\log p>1$.
For every $j\in[p]$, we consider a family of kernels 
$\boldsymbol{\psi}_j \coloneqq \{\psi_{j,(i,m)}:S_i\times S_m\to\mathbb R\}_{(i,m)\in I_{n,2}}$,
which are $\mathcal S_i\otimes\mathcal S_m$-measurable, symmetric,
and satisfy $\psi_{j,(i,m)}\in L^4(P_i\otimes P_m)$ for all $(i,m)\in I_{n,2}$.
We suppress $(i,m)$ in $\psi_{j,(i,m)}(x_i,x_m)$  when no confusion can arise and simply write $\psi_j(x_i,x_m)$.
Also, $\sigma_j \coloneqq \sqrt{\Var[J_2(\boldsymbol\psi_j)]}>0$.
Define
\begin{align*}
    W \coloneqq (J_2(\boldsymbol\psi_1) - \E[J_2(\boldsymbol\psi_1)], \dots, J_2(\boldsymbol\psi_p) - \E[J_2(\boldsymbol\psi_p)])^\top.
\end{align*}
Our main result is an explicit error bound on the normal approximation of $\pr(W\in A)$ uniformly over $A\in\mathcal{R}_p$.
To state the result concisely, we introduce some notation.
Set
\begin{align*}
     & \Delta_{1}^{(0)} \coloneqq \max_{(j,k)\in[p]^2} \frac{1}{\sigma_j^2\sigma_k^2} \sum_{m=1}^n\sum_{l\neq m} \left\| \sum_{i\neq m,l} \pi_{2,im}\psi_j \star_i^1 \pi_{2,il}\psi_k\right\|_{L^2(P_m\otimes P_l)}^2 , \\
     & \Delta_{1}^{(1)} \coloneqq \max_{(j,k)\in[p]^2}  \frac{1}{\sigma_j^2\sigma_k^2}\sum_{m=1}^n \left\|\sum_{i:i\neq m}\pi_{1,i}\psi_j \star_i^1 \pi_{2,im}\psi_k \right\|_{L^2(P_m)}^2,
\end{align*}
where we define $\pi_{2,im}\psi_j \star^1_i \pi_{2,il}\psi_k(X_m ,X_l) \coloneqq \int \pi_{2,im}\psi_j(x_i,X_m) \pi_{2,il}\psi_k(x_i,X_l)dP_i(x_i)$ and $\pi_{1,i}\psi_j \star^1_i \pi_{2,im}\psi_k(X_m) \coloneqq\int \pi_{1,i}\psi_j(x_i) \pi_{2,im}\psi_k(x_i,X_m)dP_i(x_i)$,
and
\begin{align*}
    & \Delta_{2,*}^{(1)}(1) \coloneqq \max_{j\in[p]} \frac{1}{\sigma_j^4} \sum_{i=1}^n \|\pi_{1,i}\psi_j\|_{L^4(P_i)}^4, ~~ \Delta_{2,q}^{(2)}(1) \coloneqq n^{4/q} \left\|  \max_{j\in[p]}\max_{i\in[n]} \frac{1}{\sigma_j}|\pi_{1,i}\psi_j|\right\|_{L^q(\pr)}^4 \log (np) 
\end{align*}
and
 \begin{align*}
    & \Delta_{2,*}^{(1)}(2) \coloneqq  \max_{j\in[p]} \frac{1}{\sigma_j^4} \sum_{i=1}^n\sum_{m\neq i}\|\pi_{2,im}\psi_j\|_{L^4(P_i\otimes P_m)}^4 \log^3 (np),\\
    & \Delta_{2,*}^{(2)}(2) \coloneqq \max_{j\in[p] } \frac{1}{\sigma_j^4} \sum_{i=1}^n \left\|\sum_{m\neq i}P_m(|\pi_{2,im}\psi_j|^2)\right\|^2_{L^2(P_i)}\log^2 (np), \\
    & 
    \Delta_{2,*}^{(3)}(2) \coloneqq \E\left[  \max_{j\in[p] }\max_{i\in[n]} \frac{1}{\sigma_j^4} \sum_{m\neq i} P_m(\pi_{2,im}\psi_j)^4(X_i) \right] \log^4 (np), \\
    & \Delta_{2,q}^{(4)}(2) = n^{4/q} \left\| \max_{j\in[p]} \max_{(i,m)\in I_{n,2}} \frac{1}{\sigma_j} |\pi_{2,im}\psi_j|\right\|_{L^q(\pr)}^4 \log^5(np),\\
    & \Delta_{2,q}^{(5)}(2) = n^{4/q}\left\| \max_{j\in[p]} \max_{i\in[n]} \frac{1}{\sigma_j^2} \sum_{m\neq i} P_m(|\pi_{2,im}\psi_j|^2) \right\|_{L^{q/2}(\pr)}^2 \log^3 (np).
 \end{align*}

Then, the error bound on the normal approximation is explicitly given as follows.
The following Gaussian approximation is an extension of Theorem 2 in \cite{imai2025gaussian} and is valid regardless of whether the first- or second-order Hoeffding component is dominant or neither is as long as the approximation error bound converges to $0$ as $n \to \infty$. 
The proof is in \cref{subsec:proof:thm:main-inid}.

\begin{theorem} \label{thm:main-inid}
Assume $\max_{j\in[p]}\max_{(i,m)\in I_{n,2}}\|\psi_{j,(i,m)}\|_{L^q(P_i\otimes P_m)} < \infty$ for some $q\in[4,\infty]$.
Then, there exists a universal constant $C$ such that
\begin{align}
    \sup_{A\in\mathcal{R}_p} |\pr(W\in A) - \pr(Z\in A)| \le C \left( \sqrt{\Delta_{1}'} + \left\{ (\Delta_{2,q}(1) + \Delta_{2,q}(2))\log^5 (np)\right\}^{1/4} \right), \label{eq:main-inid}
\end{align}
where $Z\sim N(0, \Cov(W))$ and
\begin{align*}
    & \Delta_{2,q}(1) \coloneqq \Delta_{2,*}^{(1)}(1) + \Delta_{2,q}^{(2)}(1), \quad \Delta_{2,q}(2) \coloneqq \sum_{\ell=1}^3 \Delta_{2,*}^{(\ell)}(2) + \sum_{\ell=4}^5 \Delta_{2,q}^{(\ell)}(2),\\
    & \Delta_1' \coloneqq  \sqrt{\Delta_1^{(0)}}\log^3 (np) +\sqrt{\Delta_1^{(1)}} \log^{5/2} (np) +  \left(\max_{j\in[p]}  \frac{1}{\sigma_j^2} \sum_{i=1}^n \|\pi_{1,i}\psi_j\|_{L^2(P_i)}^2 \right)^{1/2} \left( \Delta_{2,q}^{(5)}(2) \log^7 (np) \right)^{1/4}.
\end{align*}
\end{theorem}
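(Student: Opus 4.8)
The plan is to take the abstract high-dimensional central limit theorem via exchangeable pairs of \citet[Theorem 6]{imai2025gaussian} as the starting point, exactly as advertised in the introduction, and to specialize it to the i.n.i.d.\ second-order $U$-statistic at hand. First I would build the exchangeable pair by drawing an index $I$ uniformly from $[n]$ and replacing $X_I$ with an independent copy $X_I'$ drawn from \emph{its own} marginal $P_I$, letting $W'$ be the vector of $U$-statistics recomputed on the perturbed sample. Since resampling from the correct (heterogeneous) marginal leaves the joint law invariant, $(W,W')$ is genuinely exchangeable even without identical distributions. Using the Hoeffding decomposition \eqref{eq:Hoeffding-projections-2nd-inid}, the difference $W'-W$ splits into a first-order (linear) contribution and a second-order (quadratic) contribution that contract under the resampling at different rates of order $n^{-1}$; Theorem 6 of \citet{imai2025gaussian} is tailored to absorb this two-scale structure, so its output bound already has the shape $\sqrt{\Delta_1'}+\{(\Delta_{2,q}(1)+\Delta_{2,q}(2))\log^5 p\}^{1/4}$ appearing in \eqref{eq:main-inid}.

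Next I would reduce the abstract bound to the explicit $\Delta$-quantities. The square-root term measures the fluctuation of the conditional covariance $\E[(W'-W)(W'-W)^\top\mid\mathcal F]$ about its target $\lambda\,\Cov(W)$. Expanding each factor $W_j'-W_j$ through the Hoeffding projections and integrating out the resampled coordinate $X_I'$ produces precisely the partial contractions over the shared index: the quadratic self-contraction $\pi_{2,im}\psi_j \star^1_i \pi_{2,il}\psi_k$ and the linear--quadratic cross-contraction $\pi_{1,i}\psi_j \star^1_i \pi_{2,im}\psi_k$, which are exactly $\Delta_1^{(0)}$ and $\Delta_1^{(1)}$; the residual linear term together with the mixed linear--quadratic interaction furnishes the remaining summand of $\Delta_1'$. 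The remainder and higher-moment terms in Theorem 6 then yield the $\Delta_{2,q}(1)$ family (from the linear part, through $L^4$ and $L^q$ norms of $\pi_{1,i}\psi_j$) and the $\Delta_{2,q}(2)$ family (from the quadratic part, through $L^4$, $L^q$ and the partial-integral norms $\|P_m(|\pi_{2,im}\psi_j|^2)\|$). The point requiring genuine care in the i.n.i.d.\ case is that no common marginal can be factored out of the multi-index sums, so I would track each $P_i$ separately and exploit only the conditional independence of $X_I'$ given $\mathcal F$.

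To pass from these moment expressions to the stated suprema over $j\in[p]$ and over the index sets, I would invoke maximal inequalities. For the linear contribution I would apply the high-dimensional maximal inequalities for nonnegative adapted sequences and for martingales \citep[Lemmas 1 and 2]{imai2025gaussian} directly, but relative to the filtration generated by $X_1,\dots,X_n$ revealed in a fixed order, chosen so that each partial sum remains a martingale despite the loss of exchangeability. For the quadratic contribution, the relevant object is itself a degenerate high-dimensional $U$-statistic, and here I would use the i.n.i.d.\ maximal inequalities \cref{thm:max-is-inid} and \cref{lem:max-is-inid}; a decoupling step together with the separation of diagonal and off-diagonal parts then generates the five terms $\Delta_{2,*}^{(1)}(2),\dots,\Delta_{2,q}^{(5)}(2)$ with their respective logarithmic weights.

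The hard part will be the quadratic contribution. Proving the i.n.i.d.\ maximal inequalities and then matching the resulting bound against the \emph{sharp} i.i.d.\ bound of \citet{imai2025gaussian} is the crux: the loss of common-measure symmetry prevents the multi-index summations and the $\sigma$-field conditioning from factoring cleanly, so the decoupling argument must be redone without appealing to identical distributions, and the filtration must be arranged so that the martingale maximal inequalities still apply term by term. Keeping the powers of $\log p$ in $\Delta_{2,q}(2)$ as tight as in the i.i.d.\ regime, rather than incurring extra logarithmic losses from the heterogeneity, is where I expect the main technical difficulty to lie.
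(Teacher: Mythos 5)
Your proposal follows essentially the same route as the paper's proof: the same exchangeable pair (uniform index, resampling from the index's own marginal), the same invocation of Theorem~6 of \cite{imai2025gaussian} with the two-scale antisymmetric $G$, the same reduction of the conditional-covariance term to the contractions defining $\Delta_1^{(0)}$ and $\Delta_1^{(1)}$, and the same reliance on the i.n.i.d.\ maximal inequalities (\cref{thm:max-is-inid}, \cref{lem:max-is-inid}) plus the martingale lemmas of \cite{imai2025gaussian} with carefully ordered filtrations for the moment terms $\Delta_{2,q}(1)$ and $\Delta_{2,q}(2)$. Your identification of the quadratic contribution and the loss of common-measure symmetry as the crux also matches where the paper's actual technical work lies.
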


As noted by \cite{imai2025gaussian}, it is often easy to evaluate the quantities in terms of kernel functions rather than those of Hoeffding projections, in application.
The following corollary is an extension of Corollary 2 in \cite{imai2025gaussian} and is useful for the purpose.
The proof is in \cref{subsec:proof:coro:main-kernel-inid}.
\begin{corollary} \label{coro:main-kernel-inid}
    Under the assumption of \cref{thm:main-inid}, there exists a universal constant $C$ such that
    \begin{align*}
        \sup_{A\in\mathcal{R}_p} |\pr(W\in A) - \pr(Z\in A)| \le C \left( \sqrt{\tilde\Delta_{1}'} + \left\{ (\tilde\Delta_{2,q}(1) + \tilde\Delta_{2,q}(2))\log^5 (np)\right\}^{1/4} \right),
    \end{align*}
    where 
    \begin{align*}
        & \tilde\Delta_{2,q}(1) \coloneqq \tilde\Delta_{2,*}^{(1)}(1) + \tilde\Delta_{2,q}^{(2)}(1), \quad \tilde\Delta_{2,q}(2) \coloneqq \sum_{\ell=1}^3 \tilde\Delta_{2,*}^{(\ell)}(2) + \sum_{\ell=4}^5 \tilde\Delta_{2,q}^{(\ell)}(2),\\
        & \tilde\Delta_1' \coloneqq  \sqrt{\tilde\Delta_1^{(0)}}\log^3 (np) + \sqrt{\tilde\Delta_1^{(1)} }\log^{5/2} (np) \\
        & \qquad\qquad+ \left(\max_{j\in[p]} \frac{1}{\sigma_j^2}  \sum_{i=1}^n \left\| \sum_{m\neq i}  P_m\psi_{j,(i,m)} \right\|_{L^2(P_i)}^2 \right)^{1/2} \left(\tilde\Delta_{2,q}^{(5)}(2) \log^7 (np)\right)^{1/4}.
    \end{align*}
and
\begin{align*}
    & \tilde\Delta_{2,*}^{(1)}(1) \coloneqq \max_{j\in[p]} \frac{1}{\sigma_j^4} \sum_{i=1}^n  \left\|\sum_{m\neq i} P_m\psi_{j,(i,m)}\right\|_{L^4(P_i)}^4, \\
    & \tilde\Delta_{2,q}^{(2)}(1)\coloneqq n^{4/q} \left\| \max_{j\in[p]}  \max_{i\in[n]} \frac{1}{\sigma_j} \left|\sum_{m\neq i}P_m\psi_{j,(i,m)}\right|\right\|_{L^q(\pr)}^4 \log (np) ,
\end{align*}
and
 \begin{align*}
    & \tilde\Delta_{2,*}^{(1)}(2) \coloneqq  \max_{j\in[p]} \frac{1}{\sigma_j^4} \sum_{i=1}^n\sum_{m\neq i} \|\psi_{j,(i,m)}\|_{L^4(P_i\otimes P_m)}^4 \log^3 (np), \\
    & \tilde\Delta_{2,*}^{(2)}(2) \coloneqq \max_{j\in[p] } \frac{1}{\sigma_j^4}\sum_{i=1}^n \left\|\sum_{m\neq i}P_m\psi_{j,(i,m)}^2\right\|^2_{L^2(P_i)}\log^2 (np), \\
    & 
    \tilde\Delta_{2,*}^{(3)}(2) \coloneqq \E\left[  \max_{j\in[p] }\max_{i\in[n]} \frac{1}{\sigma_j^4}  \sum_{m\neq i}P_m\psi_{j,(i,m)}^4(X_i) \right] \log^4 (np), \\
    & \tilde\Delta_{2,q}^{(4)}(2) = n^{4/q} \left\| \max_{j\in[p]} \max_{(i,m)\in I_{n,2}} \frac{1}{\sigma_j} |\psi_{j,(i,m)}|\right\|_{L^q(\pr)}^4 \log^5(np),\\
    & \tilde\Delta_{2,q}^{(5)}(2) = n^{4/q}\left\| \max_{j\in[p]} \max_{i\in[n]} \frac{1}{\sigma_j^2} \sum_{m\neq i}P_m\psi_{j,(i,m)}^2  \right\|_{L^{q/2}(\pr)}^2 \log^3 (np).
 \end{align*}
 and
    \begin{align*}
     & \tilde\Delta_{1}^{(0)} \coloneqq  \max_{(j,k)\in[p]^2} \frac{1}{\sigma_j^2\sigma_k^2}\sum_{m=1}^n\sum_{l\neq m} \left\| \sum_{i\neq m,l}  \psi_{j,(i,m)}\star_i^1 \psi_{k,(i,l)}\right \|_{L^2(P_m\otimes P_l)}^2 +\max_{j\in[p]} \left( \frac{1}{\sigma_j^2} \sum_{i=1}^n\sum_{m\neq i}  \|P_m\psi_{j,(i,m)}\|_{L^2(P_i)}^2\right)^2  , \\
     & \tilde\Delta_{1}^{(1)} \coloneqq\left(\max_{j\in[p]} \frac{1}{\sigma_j^2}  \sum_{i=1}^n \left\| \sum_{m\neq i}  P_m\psi_{j,(i,m)} \right\|_{L^2(P_i)}^2 \right) \left( \max_{j\in[p]} \frac{1}{\sigma_j^2} \sum_{i=1}^n \left\|\sum_{m\neq i}P_m\psi_{j,(i,m)}^2\right\|_{L^2(P_i)}\right),
\end{align*}
where we define $\psi_{j,(i,m)} \star^1_i \psi_{k,(i,l)}(X_m ,X_l) \coloneqq \int \psi_j(x_i,X_m) \psi_k(x_i,X_l)dP_i(x_i)$.
\end{corollary}

\subsection{Gaussian Approximation for High-dimensional $V$-statistics} \label{subsec:main-v-inid}
In applications, we often need to handle not only $U$-statistics but also $V$-statistics. 
Once Theorem~\ref{thm:main-inid} is established, the Gaussian approximation for the second-order high-dimensional $V$-statistic with size-dependent kernels follows immediately, because the only difference from the $U$-statistic case is the additional diagonal contribution to the first-order projection.
Nevertheless, to facilitate applications, we state it explicitly as a corollary.

Before stating the corollary, we introduce notations used below.
First, we define a family of kernels 
$\boldsymbol{\psi}_j \coloneqq \{\psi_{j,(i,m)}:S_i\times S_m\to\mathbb R\}_{(i,m)\in [n]^2}$ in this subsection.
We write the second order $ V$-statistics as $J_{2}^V(\boldsymbol\psi)$, that is,
\begin{align*}
    J_{2}^V(\boldsymbol\psi) \coloneqq \frac{1}{2}\sum_{i=1}^n\sum_{m=1}^n \psi(X_{i}, X_{m}),
\end{align*}
and define
\begin{align*}
    W^V \coloneqq (J_2^V(\boldsymbol\psi_1) - \E[J_2^V(\boldsymbol\psi_1)], \dots, J_2^V(\boldsymbol\psi_p) - \E[J_2^V(\boldsymbol\psi_p)])^\top.
\end{align*}
For $s=\{1,2\}$, Hoeffding projections are defined as follows
\begin{align} \label{eq:Hoeffding-projections-2nd-V-inid}
\begin{split}
    & \pi_{1,i}^V\psi(X_i) =  \frac{1}{2}(\delta_{X_i} - P_i)\psi(X_i,X_i) + \sum_{m\neq i} (\delta_{X_i} - P_i)P_m \psi(X_i,X_m), \\
    & \pi_{2,im}\psi(X_i,X_m) =  (\delta_{X_i} - P_i)(\delta_{X_m} - P_m) \psi(X_i,X_m).
\end{split}
\end{align}
Then, Hoeffding decomposition of $J_2^V(\boldsymbol\psi)$ is given by
\begin{align*}
    J_2^V(\boldsymbol\psi) - \E[J_2^V(\boldsymbol\psi)] = \sum_{i=1}^n \pi_{1,i}^V\psi(X_i) + \frac{1}{2}\sum_{(i,m)\in I_{n,2}}\pi_{2,im}\psi(X_i,X_m).
\end{align*}
We define $\sigma_j \coloneqq \sqrt{\Var[J_2^V(\boldsymbol{\psi}_j)]}$ in this subsection.

Then, the error bound on the normal approximation is explicitly given as follows.
As in the case of $U$-statistics, the following Gaussian approximations are valid regardless of whether the first- or second-order Hoeffding component is dominant or neither is, as long as the approximation error bound converges to $0$ as $n \to \infty$.

\begin{corollary} \label{coro:main-V-inid}
Assume $\max_{j\in[p]}\max_{(i,m)\in [n]^2}\|\psi_{j,(i,m)}\|_{L^q(P_i\otimes P_m)} < \infty$ for some $q\in[4,\infty]$.
There exists a universal constant $C$ such that
\begin{align}
    \sup_{A\in\mathcal{R}_p} |\pr(W^V\in A) - \pr(Z\in A)| \le C \left( \sqrt{(\Delta_{1}')^V} + \left\{ (\Delta_{2,q}^V(1) + \Delta_{2,q}(2))\log^5 (np)\right\}^{1/4} \right), \label{eq:main-V-inid}
\end{align}
where $Z \sim N(0, \Cov(W^V))$ and
\begin{align*}
    & \Delta_{2,q}^V(1) \coloneqq (\Delta_{2,*}^{(1)})^V(1) + (\Delta_{2,q}^{(2)})^V(1), \\
    & (\Delta_1')^V \coloneqq  \sqrt{\Delta_1^{(0)}}\log^3 (np) + \sqrt{(\Delta_1^{(1)})^V}\log^{5/2} (np) \\
    & \qquad\qquad\qquad+  \left(\max_{j\in[p]}  \frac{1}{\sigma_j^2} \sum_{i=1}^n \|\pi_{1,i}^V\psi_j\|_{L^2(P_i)}^2 \right)^{1/2} \left(\Delta_{2,q}^{(5)}(2) \log^7 (np)\right)^{1/4},
\end{align*}
with
\begin{align*}
    (\Delta_{1}^{(1)})^V \coloneqq  \max_{(j,k)\in[p]^2}  \frac{1}{\sigma_j^2\sigma_k^2}\sum_{m=1}^n \left\|\sum_{i:i\neq m}\pi_{1,i}^V\psi_j \star_i^1 \pi_{2,im}\psi_k \right\|_{L^2(P_m)}^2,
\end{align*}
and
\begin{align*}
    & (\Delta_{2,*}^{(1)})^V(1) \coloneqq \max_{j\in[p]} \frac{1}{\sigma_j^4} \sum_{i=1}^n \|\pi_{1,i}^V\psi_j\|_{L^4(P_i)}^4 ,\quad (\Delta_{2,q}^{(2)})^V(1) \coloneqq n^{4/q} \left\|  \max_{j\in[p]}\max_{i\in[n]} \frac{1}{\sigma_j}|\pi_{1,i}^V\psi_j|\right\|_{L^q(\pr)}^4 \log (np) . 
\end{align*}
\begin{proof}
    Replacing $\pi_{1}\psi_j$ in quantities in \cref{thm:main-inid} with $\pi_{1}^V\psi_j$ and straightforward evaluation complete the proof. 
\end{proof}
\end{corollary}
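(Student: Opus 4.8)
The plan is to exploit the structural fact that underlies the author's one-line proof: the Hoeffding decomposition of the $V$-statistic $W^V$ has exactly the same two-block shape as that of the $U$-statistic $W$ in \cref{thm:main-inid}. Comparing the projection formulas \eqref{eq:Hoeffding-projections-2nd-inid} and \eqref{eq:Hoeffding-projections-2nd-V-inid}, the second-order projections $\pi_{2,im}\psi_j$ are \emph{identical} in the two cases, and the only change is that the first-order projection is shifted by a diagonal term,
\begin{align*}
    \pi_{1,i}^V\psi_j(X_i) = \pi_{1,i}\psi_j(X_i) + \tfrac12(\delta_{X_i}-P_i)\psi_j(X_i,X_i).
\end{align*}
Thus, for the purposes of Gaussian approximation, the $V$-statistic is nothing but the $U$-statistic with its linear part enlarged by a centered function of single observations.

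First I would check that $\pi_{1,i}^V\psi_j$ possesses the three abstract properties that the proof of \cref{thm:main-inid} demands of a first-order projection. (i) It is centered, since $\E[(\delta_{X_i}-P_i)\psi_j(X_i,X_i)]=0$ gives $\E[\pi_{1,i}^V\psi_j(X_i)]=0$. (ii) It depends on $X_i$ alone, so $\{\pi_{1,i}^V\psi_j(X_i)\}_{i\in[n]}$ remains a family of summands independent across $i$ under i.n.i.d.~sampling. (iii) It is orthogonal to the quadratic block, i.e.\ $\E[\pi_{1,i}^V\psi_j(X_i)\,\pi_{2,lm}\psi_k]=0$ for all indices, because conditioning on the observation that $\pi_{1,i}^V$ shares with $\pi_{2,lm}$ (or independence when there is none) annihilates the degenerate second-order factor exactly as in the $U$-statistic case.

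Next I would invoke \cref{thm:main-inid} verbatim with $\pi_{1,i}$ replaced by $\pi_{1,i}^V$ throughout. The crucial point is that the proof of \cref{thm:main-inid}---which starts from the exchangeable-pairs central limit theorem \citep[Theorem 6]{imai2025gaussian} and then controls the approximation error through the maximal inequalities \citep[Lemmas 1 and 2]{imai2025gaussian} and \cref{thm:max-is-inid}---accesses the statistic only through its two Hoeffding blocks together with properties (i)--(iii). Hence every error quantity built from the first-order projection, namely $\Delta_{2,*}^{(1)}(1)$, $\Delta_{2,q}^{(2)}(1)$, $\Delta_1^{(1)}$ and the $\pi_{1,m}$-factor in the last summand of $\Delta_1'$, is replaced by its $V$-analog with $\pi_{1,i}^V$ in place of $\pi_{1,i}$, whereas the quantities depending solely on $\pi_{2,im}$---that is, $\Delta_1^{(0)}$ and the entire block $\Delta_{2,q}(2)$---are inherited unchanged. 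Assembling these yields precisely the bound \eqref{eq:main-V-inid}.

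The hard part, and the only point that genuinely needs care, is to confirm that the proof of \cref{thm:main-inid} is truly indifferent to the internal form of the first-order projection, so that the substitution introduces no new terms. I would therefore re-examine the filtration used in the exchangeable-pairs step and the subsequent error decomposition to verify that the diagonal contribution $\tfrac12(\delta_{X_i}-P_i)\psi_j(X_i,X_i)$, being measurable with respect to the same single-observation $\sigma$-field and centered, occupies exactly the role played by the off-diagonal first-order projection. Consequently no cross-terms or boundary effects beyond those already handled for $W$ can arise, and no genuinely new estimate is required.
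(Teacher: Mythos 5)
Your proposal is correct and takes essentially the same route as the paper: the paper's one-line proof of \cref{coro:main-V-inid} is precisely the observation you elaborate, namely that the $V$-statistic's Hoeffding decomposition shares the second-order block with the $U$-statistic and differs only in the first-order projection, so one substitutes $\pi_{1,i}^V\psi_j$ for $\pi_{1,i}\psi_j$ in every quantity appearing in \cref{thm:main-inid}. Your extra verification that $\pi_{1,i}^V\psi_j$ is centered, depends on $X_i$ alone, and interacts with the exchangeable-pair and maximal-inequality machinery exactly as $\pi_{1,i}\psi_j$ does simply makes explicit what the paper's terse proof leaves implicit.
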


To facilitate applications, we formulate an analogue of our result expressed through quantities in terms of kernel functions, for the $V$-statistic case as well. 
The proof is in \cref{subsec:proof:coro:main-kernel-V-inid}.

\begin{corollary} \label{coro:main-kernel-V-inid}
Under the assumption of \cref{coro:main-V-inid}, there exists a universal constant $C$ such that
\begin{align*}
    \sup_{A\in\mathcal{R}_p} |\pr(W^V\in A) - \pr(Z\in A)| \le C \left( \sqrt{(\tilde\Delta_{1}')^V} + \left\{ (\tilde\Delta_{2,q}^V(1) + \tilde\Delta_{2,q}(2))\log^5 (np)\right\}^{1/4} \right), 
\end{align*}
where 
\begin{align*}
    & \tilde\Delta_{2,q}^V(1) \coloneqq (\tilde\Delta_{2,*}^{(1)})^V(1) + (\tilde\Delta_{2,q}^{(2)})^V(1), \\
    & (\tilde\Delta_1')^V \coloneqq  \sqrt{\tilde\Delta_1^{(0)}}\log^3 (np) + \sqrt{(\tilde\Delta_1^{(1)})^V}\log^{5/2} (np) \\
    & \quad +   \left(\max_{j\in[p]}  \frac{1}{\sigma_j^2} \sum_{i=1}^n \left[ \|\psi_j(X_i,X_i)\|_{L^2(P_i)}^2 +  \left\| \sum_{m\neq i}  P_m\psi_{j,(i,m)}(X_i) \right\|_{L^2(P_i)}^2 \right]\right)^{1/2}\left(\tilde\Delta_{2,q}^{(5)}(2) \log^7 (np)\right)^{1/4},
\end{align*}
with
\begin{align*}
    (\tilde\Delta_1^{(1)} )^V
         &\coloneqq \left(\max_{j\in[p]}   \frac{1}{\sigma_j^2}  \sum_{i=1}^n \|\psi_j(X_i,X_i)\|_{L^2(P_i)}^2 \right) \left( \max_{j\in[p]} \frac{1}{\sigma_j^2}  \sum_{i=1}^n\left\|\sum_{m\neq i} P_m \psi_{j,(i,m)}^2(X_i) \right\|_{L^2(P_i)} \right) \\
         & \quad +  \left( \max_{j\in[p]} \frac{1}{\sigma_j^2}  \sum_{i=1}^n \left\| \sum_{m\neq i}  P_m\psi_{j,(i,m)}(X_i) \right\|_{L^2(P_i)}^2 \right) \left( \max_{j\in[p]}  \frac{1}{\sigma_j^2} \sum_{i=1}^n \left\|\sum_{m\neq i} P_m \psi_{j,(i,m)}^2(X_i) \right\|_{L^2(P_i)}\right),
\end{align*}
and
\begin{align*}
    & (\tilde\Delta_{2,*}^{(1)})^V(1) \coloneqq \max_{j\in[p]} \frac{1}{\sigma_j^4} \sum_{i=1}^n \left\|\sum_{m\neq i}P_m\psi_{j,(i,m)}\right\|_{L^4(P_i)}^4+ \max_{j\in[p]} \frac{1}{\sigma_j^4} \sum_{i=1}^n \|\psi_{j,(i,i)}\|_{L^4(P_i)}^4, \\
    &(\tilde\Delta_{2,q}^{(2)})^V(1) \coloneqq  n^{4/q} \left(\left\| \max_{j\in[p]}  \max_{i\in[n]} \frac{1}{\sigma_j} \left|\sum_{m\neq i}P_m\psi_{j,(i,m)}\right|\right\|_{L^q(\pr)}^4 +   \left\| \max_{j\in[p]}  \max_{i\in[n]} \frac{1}{\sigma_j}|\psi_{j,(i,i)}|\right\|_{L^q(\pr)}^4 \right)\log (np). 
\end{align*}
\end{corollary}

\section{Maximal Inequalities} \label{sec:main_maximal}
In this section, we provide maximal inequalities for the high-dimensional non-negative adapted sequence (\cref{lem:nonada-inid}) and for high-dimensional general order $U$-statistics based on i.n.i.d.~observations (\cref{thm:max-is-inid} and \cref{lem:max-is-inid}).

The first result is the maximal inequality for the high-dimensional non-negative adapted sequence.
This result is the extension of Lemma 2 in \cite{imai2025gaussian}. 
\cite{imai2025gaussian} have treated $L_1$ norm of the maximum of the high-dimensional non-negative adapted sequence, whereas we do the general $L_q$ norm. 
We use this inequality for the proof of \cref{lem:max-is-inid} but this inequality is useful for other problems involving the  high-dimensional non-negative adapted sequence.
\begin{lemma} \label{lem:nonada-inid}
Let $(\eta_i)_{i=1}^N$ be a sequence of random vectors in $\mathbb R^p$ adapted to a filtration $(\mathcal{F}_i)_{i=1}^N$.
Suppose that $\eta_{ij} \ge 0$ and $\eta_{ij} \in L^1(\mathbb P)$ for all $i\in [N]$ and $j\in[p]$.
Then, for any $q\ge 1$, there exists a universal constant $C$ such that
\begin{align*}
    \left\| \max_{j\in[p]} \sum_{i=1}^N \eta_{ij}\right\|_{L^q(\mathbb{P})} \le C\left( \left\| \max_{j\in[p]} \sum_{i=1}^N\mathbb{E}[ \eta_{i,j} \mid \mathcal{F}_{i-1}]\right\|_{L^q(\mathbb{P})} + (q+\log p) \left\|\max_{i\in[N]} \max_{j\in[p]} \eta_{ij}\right\|_{L^q(\mathbb{P})} \right),
\end{align*}
where we set $\mathcal{F}_0 \coloneqq \{\emptyset, \Omega\}$.
\end{lemma}

The following two theorems are maximal inequalities for high-dimensional general order $U$-statistics based on i.n.i.d.~observations.
These maximal inequalities are the extensions of Theorem 7 and 8 in \cite{imai2025gaussian} and are used in the proofs of the Gaussian approximation results in the previous section.
They are also useful for developing theoretical analysis in applications that involve high-dimensional $U$-statistics. 
As such, they may be of independent interest.
The proofs are in \cref{subsec:proof:maximal_inid}.

\begin{theorem} \label{thm:max-is-inid}
Let $q\ge 1$ and $\boldsymbol{\psi}_j \coloneqq  \{\psi_{j,(i_1,\dots, i_r)}\}_{{(i_1,\dots, i_r)\in I_{n,r}}}$. 
Assume $\psi_{j,(i_1,\dots, i_r)}\in L^{q\vee 2}(\otimes_{s=1}^r P_{i_s})$ be degenerate, symmetric kernels of order $r\ge 1$ for all $(i_1, \dots, i_r)\in I_{n,r}$. 
Then there exists a constant $C_r$ depending only on $r$ such that
\begin{align*}
    & \left\|\max_{j\in[p]} |J_r(\boldsymbol\psi_j)|\right\|_{L^q(\pr)}
    \le C_r \max_{0 \le s \le r} (q+\log (np))^{\frac{r+s}{2}} \\
    & \quad\times \left\| \max_{j\in[p]} \max_{(i_1,\dots,i_s)\in I_{n,s}} \sum_{(k_1,\dots k_{r-s})\in K_{n,r-s}(i_1, \dots, i_s)}\int \psi_j^2(X_{i_1}, \dots, X_{i_s}, x_{k_1}, \dots x_{k_{r-s}} )\prod_{l=1}^{r-s} P_{k_l}(dx_{k_l}) \right\|_{L^{1\vee q/2}(\pr)}^{1/2}.
\end{align*}
\end{theorem}

\begin{theorem} \label{lem:max-is-inid} 
Let $q\ge 1$ and $\boldsymbol{\psi}_j \coloneqq  \{\psi_{j,(i_1,\dots, i_r)}\}_{{(i_1,\dots, i_r)\in I_{n,r}}}$. 
Assume $\psi_{j,(i_1,\dots, i_r)}\in L^{q}(\otimes_{s=1}^r P_{i_s})$ be non-negative, symmetric kernels of order $r\ge 1$ for all $(i_1, \dots, i_r)\in I_{n,r}$. 
Then there exists a constant $c_r$ depending only on $r$ such that
\begin{align*}
& \left\|\max_{j\in[p]} J_r(\boldsymbol\psi_j)\right\|_{L^q(\pr)}  \le c_r \max_{0 \le s \le r}  (q+\log (np))^{s} \\
    & \quad\times\left\| \max_{j\in[p]} \max_{(i_1,\dots,i_s)\in I_{n,s}} \sum_{(k_1,\dots k_{r-s})\in K_{n,r-s}(i_1, \dots, i_s)}\int \psi_j(X_{i_1}, \dots, X_{i_s}, x_{k_1}, \dots x_{k_{r-s}}) \prod_{l=1}^{r-s} P_{k_l}(dx_{k_l}) \right\|_{L^{q}(\pr)}. 
\end{align*}
\end{theorem}

\begin{rmk}[Comparison with \cite{imai2025gaussian}]

    The maximal inequalities above differ from i.i.d. counterparts in \cite{imai2025gaussian} in several respects. 
   First, our results apply to $U$-statistics based on i.n.i.d. observations with index-dependent kernels. This extension is important to facilitate applications because such $U$-statistics include weighted $U$-statistics and two-sample $U$-statistics as special cases.
    Second and most importantly, the right-hand sides in \cref{thm:max-is-inid} and \cref{lem:max-is-inid} partially retain the summations over sample indices.
    In principle, one could obtain simpler bounds by replacing such sums with maxima over sample indices and multiplying by suitable powers of $n$. 
    Such simplification would enable us to build on the proof strategy of \cite{imai2025gaussian}. 
    However, such a replacement can be too crude in applications.
    For instance, in weighted statistics arising from many-weak IV and many-covariates asymptotics, the sharp stochastic order is often governed by certain properties of sum of elements in projection matrices rather than by the largest single entry. 
    Finally, as a technical issue, the logarithmic factors of $\log (np)$ in our maximal inequalities  may be larger than those of $\log p$ in the corresponding inequalities of \cite{imai2025gaussian}.
    This deterioration appears to come from the proof strategy and we do not claim that these logarithmic factors are optimal. 
    Improving the logarithmic factors while still keeping the summations over sample indices would be an interesting technical refinement.
\end{rmk}

\section{Special Cases and Potential Applications} \label{subsec:motivation-inid}
As discussed in \cref{sec:intro_inid}, $U$-statistics constructed from i.n.i.d.~observations include practically important subclasses such as weighted and two-sample $U$-statistics.
In what follows, we first make this connection explicit and then provide representative examples to illustrate how our framework can be applied in concrete settings, rather than providing full technical proofs.

\subsection{Weighted $U$-statistics} \label{subsec:weighted-U-inid}

In the following remark, we confirm that $U$-statistics constructed from i.n.i.d.~observations include weighted $U$-statistics as a special case.
See also \cite[pp. 7]{DoPe17} for this observation. 

\begin{rmk}[Weighted $U$-statistics as $U$-statistics under i.n.i.d.~sampling] \label{rmk:weighted_as_inid}
Suppose we have an i.i.d.~sample $ (X_i)_{i=1}^n$ and let $(w_{(i,m)})_{(i,m)\in I_{n,2}}$ be deterministic symmetric weights.
Also, let $\varphi:S\times S\to\mathbb R$ be an index-independent symmetric kernel, and consider the second-order weighted $U$-statistic  $\sum_{1\le i<m\le n} w_{(i,m)} \varphi(X_i,X_m)$.
Define an index-dependent kernel family $\boldsymbol\psi \coloneqq \{\psi_{(i,m)}\}_{(i,m)\in I_{n,2}}$ with $\psi_{(i,m)}(X_i,X_m) = w_{(i,m)} \varphi(X_i,X_m)$.
Then, we can see that $\sum_{1\le i<m\le n} w_{(i,m)} \varphi(X_i,X_m) = 2^{-1}\sum_{(i,m)\in I_{n,2}}\psi_{(i,m)}(X_i,X_m) = J_2(\boldsymbol\psi)$ in the notation of \cref{sec:notation-inid}. 
\end{rmk}

\begin{rmk}[Random Weight]
Even when the weight  $(w_{(i,m)})_{(i,m)\in I_{n,2}}$ is random, we can validate the Gaussian approximation of $\sum_{1\le i<m\le n} w_{(i,m)} \varphi(X_i,X_m)= J_2(\boldsymbol\psi)$.
Specifically, we first apply our Gaussian approximation results to $J_2(\boldsymbol\psi)$ conditional on $(w_{(i,m)})_{(i,m)\in I_{n,2}}$ and then show the convergence of the expected value of the conditional Gaussian approximation error bound.
\end{rmk}

As representative applications of the high-dimensional weighted second order $U$-statistics, we consider settings where the target parameter becomes high-dimensional because it is indexed by many outcomes, groups, or moment conditions, considered as  “many approximating means (MAM)” framework by \citet{belloni2018high}. 
We focus on two econometric examples; (i) many-weak instrument asymptotics \citep{chao2012asymptotic} and (ii) many-covariate asymptotics for partially linear models \citep{cattaneo2018alternative}, which is also closely related to \cite{cattaneo2018inference} and \cite{jochmans2022heteroscedasticity}. 
In these frameworks, the estimators have weighted $U$-and $V$-statistic form with size-dependent kernels and their Hoeffding dominant component do not always exist.


\subsubsection{Many-Weak Instrumental Variables Asymptotics in MAM framework} \label{exm:weak_IV}
In this subsection, we first provide a brief literature review on many-weak IV settings,
and then review the theoretical framework by \cite{chao2012asymptotic}.
After that, we illustrate how our Gaussian approximation and other technical results can be used to extend the asymptotic framework of \citet{chao2012asymptotic} within the MAM framework.

\paragraph{Literature :}
Settings with many-instrument and weak-instrument are empirically relevant in modern econometrics. 
Weak instruments are frequently encountered in applied work; for example, \citet{andrews2019weak} and \citet{lee2022valid} document that a nontrivial share of empirical papers report first-stage $F$-statistics below conventional thresholds.
Also, since the Quarter‑of‑Birth in the famous paper \cite{angrist1991does}, many-IV situations have been frequently considered in econometrics, for example, Hausman IVs \citep{berry1995automobile,nevo2001measuring}, judge IVs \citep{kling2006incarceration,dahl2014family,dobbie2018effects,mikusheva2022inference,frandsen2023judging}, Bartik IVs \citep{adao2019shift,goldsmith2020bartik,borusyak2022quasi,borusyak2025practical}, wind direction IVs  \citep{deryugina2019mortality,bondy2020crime}, granular IVs \citep{gabaix2024granular} and Mendelian randomization \citep{davies2015many}, to name but a few.

A canonical asymptotic framework for inference under many-weak IV setting is \citet{chao2012asymptotic}, who develop an asymptotic framework that yields valid Gaussian approximations for IV estimators across a wide range of identification strengths.
Their analysis highlights that IV estimators can be written as weighted $U$-statistic forms and that the relative contribution of linear and quadratic Hoeffding components depends on a strength-to-dimension ratio of IVs. 
In particular, as the instrument dimension grows, the dominant term in the relevant Hoeffding-type decomposition can shift between linear-dominant, balanced (no dominant component), and quadratic-dominant regimes.

Our contribution is to extend this regime-robust inference method to high-dimensional simultaneous inference by virtue of \cref{thm:main-inid} and \cref{thm:max-is-inid}. 
Since the strength-to-dimension ratio of IVs can vary across outcomes, groups, and time periods,  linear-dominant, balanced, and quadratic-dominant regimes may coexist within the same problem. 
However, in practice, explicitly classifying each coordinate into a correct regime is typically infeasible.
Also, estimates across coordinates can be highly dependent because they share the sample and/or instruments, but it is difficult to justify restrictive dependence assumptions or approximation results tailored to specific correlation structures. 
Our Gaussian approximation accommodates both coordinate-wise regime heterogeneity and general cross-coordinate dependence, without requiring prior identification of the asymptotic regime or imposing a particular dependence structure across coordinates.


\paragraph{Framework :}
Suppose,  for each $j\in[p]$, the following instrumental variable model;
\begin{align*}
    & Y_{ij} = X_{ij}^\top \theta_j + u_{ij},\\
    & X_{ij} = \gamma_{n,j}^\top Z_{ij} + \varepsilon_{ij}, \quad \mathbb{E}[u_{ij} \mid Z_{ij}] = 0, \quad \mathbb{E}[\varepsilon_{ij}\mid Z_{ij}]=0.
\end{align*}
where $Y_{ij}$ is scalar dependent variable, $u_{ij}$ and $\varepsilon_{ij}$ are scalar and $d_j$-dimensional error terms respectively,  $X_{ij}$ is a $d_j$-dimensional vector of explanatory variable, $Z_{ij}$ is a $K_{n,j}$-dimensional vector of instrumental variables and $\theta_j\in\mathbb{R}^{d_j}$ and $\gamma_{n,j}\in\mathbb{R}^{K_{n,j}\times d_j}$ are non-random coefficients.
The number of instruments $K_{n,j}$ is fixed or diverges to infinity as $n\to\infty$.
For the coefficient $\gamma_{n,j}$, we assume that
\begin{align}
    \gamma_{n,j} \coloneqq \frac{1}{\sqrt{n}} \mu_{n,j} \pi_j \label{eq:local_to_zero}
\end{align}
where $\mu_{n,j}$ is a scalar $n$-dependent sequence and $\pi_j$ is a $(K_{n,j}\times d_j)$-dimensional matrix of constants.

In this subsection, we consider the JIVE2 proposed by \cite{angrist1999jackknife}:
\begin{align*}
    \hat{\theta}_{n,j} = \left( \sum_{i=1}^{n-1}\sum_{m\neq i}^n X_{ij}\Pi_{im,j}X_{mj}^\top \right)^{-1} \sum_{i=1}^{n-1}\sum_{m\neq i}^n X_{ij}\Pi_{im,j}Y_{mj}, 
\end{align*}
where $\Pi_{im,j}$ is $(i,m)$-th element of the projection matrix $\Pi_j\coloneqq Z_j(Z_j^\top Z_j)^{-1}Z_j^\top$.
We show that $\hat{\theta}_{n,j} - \theta_j $ has a weighted $U$-statistic form which does not always have Hoeffding dominant component.
It is assumed $\sqrt{K_{n,j}}/\mu_{n,j}^2 \to 0$ as $n\to\infty$ to ensure the consistency of an estimator. 
Since $Y_{ij} = X_{ij}^\top\theta_j + u_{ij}$,  it can be seen that
\begin{align*}
    \hat{\theta}_{n,j} 
    &\coloneqq \theta_j + \hat{\Gamma}_{n,j}^{-1} S_{n,j},
\end{align*}
where $\hat{\Gamma}_{n,j} \coloneqq \mu_{n,j}^{-2}\sum_{i=1}^{n-1}\sum_{m\neq i}^n X_{ij}\Pi_{im,j}X_{mj}^\top$ and $S_{n,j} \coloneqq  \mu_{n,j}^{-2}\sum_{i=1}^{n-1}\sum_{m\neq i}^n X_{ij}\Pi_{im,j}u_{mj}$.
Some evaluations on $\hat\Gamma_{n,j} $ and Hoeffding-decomposition of $S_{n,j}$ give
\begin{align*}
    \hat{\theta}_{n,j} - \theta_j 
    &\approx \Gamma_{n,j}^{-1}\left(  \frac{1}{\mu_{n,j}\sqrt{n}}\sum_{i=1}^n(I - \Pi_{ii,j}) (Z_{i,j}^\top\pi_j)u_{ij} + \frac{1}{\mu_{n,j}^2} \sum_{i=1}^n\sum_{m\neq i}^n \Pi_{im,j}  \varepsilon_{ij}u_{mj}  \right)  ,
\end{align*}
with $\Gamma_{n,j} \coloneqq  \frac{1}{n}\sum_{i=1}^n (1 - \Pi_{ii,j})(\pi_j^\top Z_{ij}) (\pi_j^\top Z_{ij})^\top = O_p(1)$.
Note that the convergence rate of the first term is $O_p(1/\mu_{n,j})$ and  of the second term is $O_p(\sqrt{K_{n,j}}/\mu_{n,j}^2)$. 
Then, it can be seen that
\begin{align*}
    & \text{Case (i)} :  K_{n,j} \text{ is fixed and }\mu_{n,j}=O(n^{1/2}) \iff \text{the linear term is dominant},\\
    & \text{Case (ii)} :  K_{n,j} \to \infty \text{ and } K_{n,j}/\mu_{n,j}^2\to \kappa\in(0,\infty)  \iff \text{the linear term $\asymp$ the quadratic term}, \\
    & \text{Case (iii)} :  K_{n,j} \to \infty \text{ and } K_{n,j}/\mu_{n,j}^2\to\infty \iff \text{the quadratic term is dominant}.
\end{align*}
These three cases are corresponding to (i) the ordinary instrumental variable regression model, (ii) linear regression model with many instrumental variables, and (iii) linear regression model with many-weak instrumental variables. 
In $U$-statistic terms, Case (i) corresponds to dominance of the linear term, Case (ii) to absence of the dominant term and Case (iii) to dominance of the quadratic term.
Therefore, $\hat{\theta}_{n,j} - \theta_j $ has a weighted $U$-statistic form which does not always have Hoeffding dominant component.
\cite{chao2012asymptotic} established normal approximation of $\hat{\theta}_{n,j}$ under the fixed-dimensional setting, applying their Lemma A.2. 
Their approximation is valid for any of the these three regimes.

We can use our high-dimensional CLT (\cref{thm:main-inid}) and the evaluation on the high-dimensional $U$-statistics (\cref{thm:max-is-inid})  to extend the framework to the $(d_j\times p)$-dimensional joint vector $\hat{\theta}_n \coloneqq (\hat{\theta}_{n,1}, \dots, \hat{\theta}_{n,p})$ with a large $p$.
Specifically, we can use \cref{thm:max-is-inid} to derive the influential function by the uniform evaluation on $\hat{\Gamma}_{n,j}$ conditional on $Z$ and apply \cref{thm:main-inid} to obtain a simultaneous Gaussian approximation for the resulting influential function with weighted $U$-statistic form.
Such Gaussian approximation result imposes no structural restrictions on the dependence across coordinates.
More notably, our result accommodates heterogeneous regimes, for example, Case (i) holds for a coordinate $j\in[p]$ whereas Case (iii) holds for a different coordinate $k\in[p]$.

\subsubsection{Many Covariates Asymptotics in MAM framework}  \label{exm:many_cov}
In this subsection, we first provide a brief literature review on many-covariate settings,
and then discuss how our Gaussian approximation and maximal inequalities can be used to extend the asymptotic framework of \citet{cattaneo2018alternative} within the MAM framework.


\paragraph{Literature: }
Settings with many covariates are empirically important in modern econometrics and statistics, arising both in observational studies and in randomized experiments.
As noted by \cite[Section 1]{cattaneo2018alternative}, one motivation for using many covariates is to make the unconfoundedness assumption more credible in observational settings by conditioning on a rich set of controls, potentially including interactions, nonlinear transformations of covariates, and fixed effects.
Another motivation is efficiency gain in randomized controlled trials \citep[Section 7]{imbens2015causal} and the theoretical analysises have been conducted under high-dimensional covariates settings \citep{lei2021regression,chiang2025regression}.

A canonical asymptotic framework for inference with many covariates (and/or sieve dimension in the semiparametric analysis) is developed by \citet{cattaneo2018alternative}, with closely related results in \cite{cattaneo2018inference} and \cite{jochmans2022heteroscedasticity}.
A key feature of “many-covariate” asymptotics is that estimators have influential functions with weighted second-order $V$-statistic–type forms which admits decomposition into linear and quadratic form components, and their relative importance is determined by the covariate dimension-to-sample size ratio.

Our contribution is to extend this many-covariate asymptotics to high-dimensional simultaneous inference by virtue of \cref{thm:main-inid} and \cref{thm:max-is-inid}.
When estimates are reported for many outcomes, groups, and time periods, the covariate dimension and its ratio to sample size can vary across coordinates.
As a result, linear-dominant, balanced (no dominant component), and quadratic-dominant behaviors may coexist within the same empirical application.
In practice, identifying which regime applies to each coordinate and tailoring inference accordingly is typically infeasible. 
Moreover, the estimates can be strongly dependent across coordinates  because they are computed from the same sample and often share covariates.
Our Gaussian approximation enables valid simultaneous inference under such coordinate-specific regime heterogeneity while allowing general cross-coordinate dependence, without prespecifying the asymptotic regime and the correlation structure.

\paragraph{Framwork:}

Suppose, for each $j\in[p]$, we have an i.i.d.~sample $(Y_j,X_j,Z_j) \coloneqq (Y_{ij}, X_{ij}^\top, Z_{ij}^\top)_{i=1}^n$ and consider the following partially linear model
\begin{align*}
    Y_{ij} = X_{ij}^\top\beta_j + g_j(Z_{ij}) + \varepsilon_{ij}, \quad \mathbb{E}[\varepsilon_{ij} \mid X_{ij}, Z_{ij}] = 0,
\end{align*}
where $Y_{ij}$ is a scalar dependent variable, $X_{ij}$ and $Z_{ij}$ are $d_{X_j}$-dimensional and $d_{Z_j}$-dimensional random vector.
Let $\mathbf{p}_{K_{n,j}}(Z_j)$ be a $K_{n,j}$-dimensional vector of approximating functions, such as power series and splines, and its definition is given by as follows
\begin{align*}
    \mathbf{p}_{K_{n,j}}(Z_j) \coloneqq [p_{1}(Z_j), \dots, p_{K_{n,j}}(Z_j)]^\top.
\end{align*}
Also, let $P_{K_{n,j}}$ be $(n\times K_{n,j})$ matrix whose  $i$-th row of $P_{K_{n,j}}$ is $\mathbf{p}_{n,j}(Z_{ij})$, that is
\begin{align*}
    P_{K_{n,j}} \coloneqq [\mathbf{p}_{K_{n,j}}(Z_{1j}), \dots, \mathbf{p}_{K_{n,j}}(Z_{nj})]^\top.
\end{align*}
To describe estimator, define $M_j \coloneqq I_n - P_{K_{n,j}}(P_{K_{n,j}}^\top P_{K_{n,j}})^{-1} P_{K_{n,j}}^\top$.
Then, the series based estimator for $\beta$ is given by
\begin{align*}
    \hat{\beta}_j = \left( \sum_{i=1}^n \sum_{m=1}^n M_{im,j}X_{ij}X_{mj}^\top \right)^{-1} \sum_{i=1}^n \sum_{m=1}^n M_{im,j}X_{ij}Y_{mj},
\end{align*}
where $M_{ij,j}$ is the $(i,m)$-th element of $M_j$.

Next, we show $\hat{\beta}_j - \beta_j$ has a weighted $U$-statistic form whose Hoeffding dominant component is absent.
Since $Y_{ij} = X_{ij}^\top\beta_j + g_j(Z_{ij}) + \varepsilon_{ij}$, it holds that $\sqrt{n}(\hat{\beta}_j - \beta_j)  \coloneqq   \hat{\Gamma}_{n,j}^{-1} S_{n,j}$,
where $\hat{\Gamma}_{n,j} \coloneqq n^{-1} \sum_{i=1}^n \sum_{m=1}^n M_{im,j}X_{ij}X_{mj}^\top$ and $S_{n,j} \coloneqq n^{-1/2} \sum_{i=1}^n \sum_{m=1}^n M_{im,j}X_{ij}( g_j(Z_{mj}) + \varepsilon_{mj})$.
To describe the asymptotic representation of $\hat{\Gamma}_{n,j}$ and $S_{n,j}$, we define $h_j(Z_{ij}) \coloneqq \mathbb{E}[X_{ij} \mid Z_{ij}]$ and $v_{ij} \coloneqq X_{ij} - h_j(Z_{ij})$.
Assume, for all $j\in[p]$, $rank(P_{K_{n,j}}) = K_{n,j}$.
In addition, assume that there is a positive constant $C>0$ such that $C \le M_{ii,j}$ and for some $\alpha_{g,j}, \alpha_{h,j} > 0$ and there is a $C<\infty$ such that
\begin{align*}
        & \min_{\gamma_{g,j}\in\mathbb{R}^{K_{n,j}}} \mathbb{E}\left[ |g_j(Z_{ij}) - \gamma_{g,j}^\top p_{K_{n,j}}(Z_{ij})|^2 \right] \le C K_{n,j}^{-2\alpha_{g,j}}, \\
        &  \min_{\gamma_{h,j}\in\mathbb{R}^{K_{n,j}\times d_{X_j}}} \mathbb{E}\left[ \|h_j(Z_{ij}) - \gamma_{h,j}^\top \mathbf{p}_{K_{n,j}}(Z_{ij})\|^2 \right]  \le CK_{n,j}^{-2\alpha_{h,j}},
\end{align*}
for all $j\in[p]$.
Then, an evaluation gives $\hat{\Gamma}_{n,j} \approx \frac{1}{n} \sum_{i=1}^n M_{ii,j}\mathbb{E}[v_{ij} v_{ij}^\top \mid Z_{ij}]$.
Also, Hoeffding decomposition of $S_{n,j}$ is given by as follows.
\begin{align*}
    S_{n,j} 
    &= \frac{1}{\sqrt{n}} \sum_{i=1}^n \sum_{m=1}^n M_{im,j}X_{ij}( g_j(Z_{mj}) + \varepsilon_{mj}) = B_{n,j} + \Psi_{n,j} + R_{n,j} + U_{n,j},
\end{align*}
with
\begin{align*}
    & B_{n,j} \coloneqq \frac{1}{\sqrt{n}} \sum_{i=1}^n \sum_{m=1}^n M_{im,j} h_j(Z_{ij})g_j(Z_{mj}), 
    \quad  
    \Psi_{n,j} \coloneqq \frac{1}{\sqrt{n}} \sum_{i=1}^n M_{ii,j} v_{ij}\varepsilon_{ij}, \\
    & R_{n,j} \coloneqq \frac{1}{\sqrt{n}} \sum_{i=1}^n \sum_{m=1}^n M_{im,j} [v_{ij} g_j(Z_{mj}) + h_j(Z_{mj})\varepsilon_{ij}], 
    \quad  
    U_{n,j} \coloneqq  \frac{1}{2\sqrt{n}}  \sum_{i=1}^n \sum_{m\neq i}^n  M_{im,j}[ v_{ij}\varepsilon_{mj} + v_{mj}\varepsilon_{ij} ],
\end{align*}
From some evaluations using our developed maximal inequality (\cref{thm:max-is-inid}), we can see that $B_{n,j} $ and $R_{n,j} $ are negligible. 
Finally, we have the influential function:
\begin{align*}
    S_{n,j} \approx \Psi_{n,j} + U_{n,j}  = \frac{1}{\sqrt{n}} \sum_{i=1}^n \sum_{m=1}^n M_{im,j} v_{ij} \varepsilon_{mj}.
\end{align*}
In the fixed dimensional setting, \cite{cattaneo2018alternative} apply Lemma A.2 of \cite{chao2012asymptotic} and show the asymptotic normality which captures the uncertainty of both the linear term $\Psi_{n}$ and the quadratic term $U_{n}$. 

Similarly to the case of many-weak IV asymptotics, we can use our high-dimensional CLT for $V$-statistics (\cref{coro:main-V-inid}) and the evaluation on the high-dimensional $U$-statistics (\cref{thm:max-is-inid}) to extend this result to the $(d_{X_j}\times p)$-dimensional joint vector $\hat{\beta}_n \coloneqq (\hat{\beta}_{n,1}, \dots, \hat{\beta}_{n,p})$ with large $p$.

\subsection{Two-sample $U$-statistics} \label{subsec:two-sample-U}

In the following remark, we confirm that $U$-statistics constructed from i.n.i.d.~observations include two-sample $U$-statistics as a special case.

\begin{rmk}[Two-sample $U$-statistics as $U$-statistics under i.n.i.d.~sampling]
Let $\mathcal I=\{1,\dots,n\}$ and $\mathcal J=\{n+1,\dots,n+m\}$ be index sets for two independent samples
$(X_i)_{i\in\mathcal I}\stackrel{\text{i.i.d.}}{\sim}P$ and $(Y_j)_{j\in\mathcal J}\stackrel{\text{i.i.d.}}{\sim}Q$.
We write $N \coloneqq n + m$.
Define $Z:=(Z_a)_{a=1}^{N}$ by $Z_a=X_a$ for $1\le a\le n$ and $Z_a=Y_{a-n}$ for $n<a\le N$,
and set the marginals $P_a=P$ for $a\le n$ and $P_a=Q$ for $a>n$, then $Z$ is i.n.i.d.~sample.
Also, let  $\varphi:S\times S\to\mathbb R$ be index-independent symmetric kernel and consider second order two-sample $U$-statistics $\sum_{i=1}^n\sum_{j=1}^m \varphi(X_i,Y_j) $.
Define an kernel family ${\boldsymbol\psi} \coloneqq \{\psi_{(a,b)}\}_{(a,b)\in I_{N,2}}$ with $\psi_{(a,b)}(z_a,z_b) \coloneqq 
\varphi(z_a,z_b)\mathbf 1\{(a\in\mathcal I, b\in\mathcal{J}) \text{ or } (a\in\mathcal J, b\in\mathcal{I})\}$.
Then, we can see that $\sum_{i=1}^n\sum_{j=1}^m \varphi(X_i,Y_j) = 2^{-1}\sum_{(a,b)\in I_{N,2}} \psi_{(a,b)}(z_a,z_b) = J_2(\boldsymbol\psi)$ in the notation of \cref{sec:notation-inid}.
\end{rmk}

As an application of high-dimensional two-sample $U$-statistics, we revisit theoretical analysis of  \cite{li2024optimality} on an adaptive kernel based homogeneity test in \cref{exm:homo_test}, following \cite{imai2025gaussian}'s theoretical analysis on an adaptive goodness-of-fit test.
As another example, we consider inference for high-dimensional separately exchangeable arrays in \cref{exm:separate}, building on the Gaussian approximation of \cite{chiang2023using} under the fixed-dimensional setting. 

\subsubsection{Kernel Based Adaptive Homogeneity Test}\label{exm:homo_test}
In this subsection, we first provide a brief literature review on adaptive homogeneity-test,
and then discuss how our Gaussian approximation can be used to extend the theoretical result of \cite{li2024optimality}.

\paragraph{Literature:}

Testing whether different samples follows a same distribution is the standard problem in economics.
For example, \cite{bera2013smooth} compare the age distributions of employees with small employers in New York and Pennsylvania with group insurance before and after the enactment of the “community rating” legislation in New York, in order to test the conventional wisdom that if community rating is enforced (where the group health insurance premium does not depend on age or any other physical characteristics of the insured), then the insurance market will collapse, since only older or less healthy patients would prefer group insurance. 
See also, for example, Section 5 in \cite{li2009nonparametric} and Section 4 in \cite{zhang2024testing} for other concrete problem, to name but a few.

In many smoothing-based tests, the power and size-distortion are highly sensitive to the bandwidth choice.
However, the appropriate bandwidth depends on the smoothness of functions associated with data generating process, which is rarely known a priori.
In the framework of adaptive test, one compute a family of test statistics over a range of bandwidth and then aggregate them, typically by taking a maximum and using a critical value that accounts for searching over many tuning parameters, so that the procedure does not require a priori knowledge of the smoothness of the alternative \citep{horowitz2001adaptive,chetverikov2021adaptive}.

In this subsection we use adaptive distributional homogeneity testing as a example of high-dimensional inference for two-sample $U$-statistics with size-dependent kernels.
When the kernel function is positive definite, the maximum mean discrepancy (MMD) introduced later provides a population discrepancy that vanishes under the null hypothesis \citep{sriperumbudur2010hilbert,gretton2012kernel}. 
The standard estimator of MMD can be written as a weighted two-sample $U$-statistic with a kernel which depends on the sample size through the bandwidth and the adaptivity step turns the problem into a high-dimensional vector of test statistics indexed by many candidates of bandwidths, so it fits directly into our framework.
As a recent contribution, \cite{li2024optimality} establishes the minimax optimality and adaptivity of MMD-based two-sample tests with Gaussian kernels and appropriate bandwidth choices.

Our contribution is to provide the technical result (\cref{coro:main-kernel-inid}) to refine the theoretical results in \cite{li2024optimality}, likewise the refinement by \cite{imai2025gaussian} in the case of goodness-of-fit testing for the parametric specification of density function  (cf. Remark 6 in \citealp{imai2025gaussian}).

\paragraph{Framework:}
$\mathcal I=\{1,\dots,n\}$ and $\mathcal J=\{n+1,\dots,n+m\}$ be index sets for two independent samples
$(X_i)_{i\in\mathcal I}\stackrel{\text{i.i.d.}}{\sim}P$ and $(Y_j)_{j\in\mathcal J}\stackrel{\text{i.i.d.}}{\sim}Q$ and define the densities of $P$ and $Q$ as $f$ and $g$, respectively.
We define $N \coloneqq n + m$ and  $Z:=(Z_a)_{a=1}^{N}$ by $Z_a=X_a$ for $1\le a\le n$ and $Z_a=Y_{a-n}$ for $n<a\le N$, and set the marginals $P_a=P$ for $a\le n$ and $P_a=Q$ for $a>n$, then $Z$ is i.n.i.d.observation.
We aim to test whether two independent sample $X$ and $Y$ come from a common population or not.
Namely, we consider the following hypothesis testing problem:
\begin{align*}
    H_0 : P=Q, \quad \text{vs} \quad H_1 : P\neq Q.
\end{align*}
Let $K:\mathbb{R}^d \to \mathbb{R}$ be a bounded positive definite function.
For every positive number $h>0$, write
\begin{align*}
    \varphi_{h}(x,y) \coloneqq K_{h}(x-y), \quad \text{with} \quad K_{h}(u) \coloneqq \frac{1}{h^d}K\left( \frac{u}{h} \right).
\end{align*}
Lemma 6 in \cite{gretton2012kernel} give the following distance between $P$ and $Q$;
\begin{align*}
     \text{MMD}^2(P||Q) \coloneqq \int_{\mathbb{R}^d\times\mathbb{R}^d} \varphi_{h}(x,y) \{f(x) - g(x)\}\{f(y)-g(y)\}dxdy,
\end{align*}
which can be seen as the squared maximum mean discrepancy (MMD) between $P$ and $Q$, based on the kernel $\varphi_{h_n}$ (cf. Eq(10) in \cite{sriperumbudur2010hilbert}).
In particular, $\text{MMD}^2(P||Q) = 0$ if and only if $f=g$ a.e, provided that $\varphi_{h_n}$ is a characteristic kernel in the sense of \citep[Definition 6]{sriperumbudur2010hilbert}.
This suggests rejecting the null hypothesis when an estimator for $\text{MMD}^2(P||Q)$ is large.
As a sample analogue of $\text{MMD}^2(P||Q) $, \cite{gretton2012kernel} propose;
\begin{align*}
    & \widehat{\text{MMD}}_{h}^2 \coloneqq \sum_{(i,j)\in I_{N,2}} w_{ij} \varphi_h(Z_i,Z_j), \\
    & ~~\text{with}~~
     w_{ij} \coloneqq \frac{1\{i,j\in\mathcal{I}\}}{n(n-1)} + \frac{1\{i,j\in\mathcal{J}\}}{m(m-1)} -\frac{ 1\{(i\in\mathcal{I}, j\in\mathcal{J}) \text{ or } (i\in\mathcal{J},j\in\mathcal{I})\}}{mn}.
\end{align*}
Letting $\boldsymbol\psi_h \coloneqq \{w_{ij}\varphi_{h}\}_{(i,j)\in I_{N,2}}$, the test statistic satisfies $\widehat{\text{MMD}}_{h}^2 = J_2(\boldsymbol\psi_h)$.

Recently, \cite{li2024optimality} have shown that the test based on the normalized version of this estimator is minimax optimal against smooth alternatives if $K$ is Gaussian kernel and $h$ is chosen appropriately.
To be precise, denote by $\mathcal{P}_d$ the set of probability density functions on $\mathbb{R}^d$.
Fix a constant $R>0$.
Given a constant $\alpha>0$ and a sequence $\rho_n$ of positive numbers tending to $0$ as $n\to\infty$, we associate the sequence of alternatives as 
\begin{align*}
    H_1(\rho_n;\alpha) \coloneqq \{f,g \in\mathcal P_d, \|f\|_{H^\alpha} \vee \|g\|_{H^\alpha} \le R, \|f-g\|_{L^2(\mathbb{R}^d)} \ge \rho_n  \}.
\end{align*}
In Theorem 5(i) in \cite{li2024optimality}, if we choose $h \asymp n^{-2/(4\alpha + d)}$, the aforementioned test is consistent for the alternative $f,g \in H_1(\rho_n; \alpha)$ as long as $\rho_n/\rho_n^*(\alpha) \to \infty$, where $\rho_n^*(\alpha)\coloneqq n^{-2\alpha/(4\alpha+d)}$.
Moreover, Theorem 5(ii) in \cite{li2024optimality}, $\liminf_{n\to\infty} \rho_n/\rho_n^*(\alpha)<\infty$, there is no consistent test against $f,g \in H_1(\rho_n; \alpha)$ for some significant level.
To conduct the test without prior knowledge on $\alpha$,  \cite{li2024optimality} considered the maximum of $J_2(\boldsymbol\psi_h)$ over a range of $h$ and showed that this test is adaptive to $\alpha>d/4$ up to a logarithmic factor; see Theorem 10 in \cite{li2024optimality}.

In the same way as Theorem 4 in \cite{imai2025gaussian}, we can refine the theory of \cite{li2024optimality} using our developed Gaussian approximation (\cref{coro:main-kernel-inid}).
Specifically, by virtue of \cref{coro:main-kernel-inid}, we can establish an asymptotic theory which refines Theorem 10 in \cite{li2024optimality} in three directions: (i) It does not require $\alpha \ge d/4$. (ii) The kernel function $K$ is not necessarily Gaussian. (iii) It makes distinguishable separation rate $\rho_n(\alpha)$ smaller from  $(\log\log n/n)^{2\alpha/(4\alpha +d)}$  to the non-improvable rate $(\sqrt{\log\log n}/n)^{2\alpha/(4\alpha +d)}$. 
As noted in Remark 6 in \cite{imai2025gaussian}, \cite{schrab2023mmd} have addressed (i) and (ii) under the additional assumption that  the underlying densities are bounded.

\subsubsection{Inference for High-dimensional Separately Exchangeable  Array}\label{exm:separate}
In this subsection, we first provide a brief literature review on inference for separately exchangeable arrays,
and then illustrate how our Gaussian approximation for $V$-statistics can be to conduct simultaneous inference for high-dimensional separately exchangeable arrays.

\paragraph{Literature:}

Separately exchangeable arrays provide a natural stochastic framework for two-way clustered data in which the two indices represent different roles, such as products and markets in market data (cf. Example 1 in the supplementary material for \citealp{chiang2023inference}). 
Also, as discussed in \cite[Section 1]{chiang2023inference}, separately exchangeable arrays include row-column exchangeable models \citep{mccullagh2000resampling}, additive cross random models \citep{owen2007pigeonhole,owen2012bootstrapping} and multiway clustering \citep{cameron2011robust}.

Although, as a recent contribution, \cite{menzel2021bootstrap} showed that the sample average of exchangeable arrays have potentially non-Gaussian limiting distributions,
\citet{chiang2023using} derive a Gaussian approximation for two-way clustered triangular arrays using a central limit theorem (CLT) for degenerate two-sample $U$-statistics in conjunction with other technical results and argues that the non-Gaussianity arises only in exceptional situations.
We therefore focus on Gaussian regimes and consider the extension of \cite{chiang2023using} to the high-dimensional setting in the present example.

For simultaneous inference in high-dimensional setting, \cite{chiang2023inference} have already established high-dimensional CLTs  and developed multiplier bootstrap methods with theoretical guarantees for general-order-way separately and jointly exchangeable arrays.
Their proofs are based on an $U$-statistic like latent-structure via Aldous–Hoover–Kallenberg \citep{aldous1981representations,hoover1979relations,kallenberg2005probabilistic} representation of exchangeable arrays and a Hoeffding-type decomposition of the sample mean, where the leading term is the Hájek projection and the remaining higher-order terms are controlled.

Our contribution is to remove the non-degeneracy requirement of \cite{chiang2023inference} in the high-dimensional for the separately exchangeable arrays under the setting corresponding to the framework of \cite{chiang2023using},
by combining (i) our Hoeffding-degeneracy robust Gaussian approximations provided in \cref{sec:main-inid}, (ii) the existing high-dimensional CLT of \cite{CCKK22} for sums of  independent random vectors and (iii) a technical lemma that combines multiple Gaussian approximations to obtain a Gaussian approximation for the full statistic (\cref{lem:gluing_separable_array}).

\paragraph{Framework:} 
First, we formally introduce the setup.
The following argument is an extension of \cite{chiang2023inference} to the high-dimensional setting.
For $n,m\in\mathbb{N}_+$ and $j\in[p]$, let $\{D_{it,j}\}_{1\le i\le n, 1\le t\le m}$ be the separately exchangeable array, that is, we assume $(D_{it,j})_{1\le i\le n, 1\le t\le m} \overset{d}=({D_{\sigma_1(i)\sigma_2(t),j}})_{1\le i\le n, 1\le t\le m} $ holds for any permutation $\sigma_1$ and $\sigma_2$,
then,  Aldous–Hoover-Kallenberg representation theorem \citep{hoover1979relations,aldous1981representations,kallenberg2005probabilistic} implies that there exists a measurable function $\psi_{nm,j}$ such that
\begin{align*}
    D_{it,j} \coloneqq \psi_{nm,j}(\alpha_{i}, \gamma_{t}, \varepsilon_{it}),
\end{align*}
where $(\alpha_{i})_{1\le i\le n}$, $(\gamma_{t})_{1\le t \le m}$, and $(\varepsilon_{it})_{1\le i\le n, 1\le t\le m}$ are mutually independent i.i.d.~random variables.
Also define
\begin{align*}
    \hat{\theta}_{nm,j} \coloneqq \frac{1}{nm}\sum_{i=1}^n\sum_{t=1}^m D_{it,j} = \frac{1}{nm}\sum_{i=1}^n\sum_{t=1}^m \psi_{nm,j}(\alpha_{i}, \gamma_{t}, \varepsilon_{it}) .
\end{align*}
For simplicity, we assume $\mathbb{E}[D_{it,j}] = 0$ for all $j\in[p]$ without loss of generality.
We can see that
\begin{align*}
    \hat{\theta}_{nm,j} = \frac{1}{nm}\sum_{i=1}^n\sum_{t=1}^m \left\{ D_{it,j}  - \mathbb{E}[D_{it,j} \mid \alpha_i, \gamma_t] \right\}  + \frac{1}{nm}\sum_{i=1}^n\sum_{t=1}^m \mathbb{E}[D_{it,j} \mid \alpha_i, \gamma_t] \eqqcolon I_j + II_j.
\end{align*}
Although the second term $II_j$ further admits a Hoeffding-type decomposition into linear and degenerate components (cf. Eq.(3.1) in \citealp{chiang2023using}), the decomposition above is sufficient for the application of our Gaussian approximation since our Gaussian approximation is valid regardless of which component is dominant in the underlying Hoeffding decomposition and require an evaluation on quantities only in terms of kernel functions rather than Hoeffding projections, by virtue of \cref{coro:main-kernel-V-inid}. 

Let $I\coloneqq(I_j)_{j\in[p]}$, $II\coloneqq (II_j)_{j\in[p]}$ and $\mathcal{F} \coloneqq \sigma(\alpha_1,\dots, \alpha_n, \gamma_1,\dots, \gamma_m)$. 
Then $I$ is the sum of independent random variables conditional on $\mathcal{F}$.
Also, $II$ is the second-order two-sample $V$-statistics with size-dependent kernels.
Therefore, we can obtain (i) a Gaussian approximation for $I$ conditional on $\mathcal{F}$ via the high-dimensional CLT for sums of independent random vectors in \citet{CCKK22} and (ii) a Gaussian approximation for $II$ via our \cref{coro:main-kernel-V-inid}.
Hence, the remaining technical tasks are to control the discrepancy between the conditional approximation and the unconditional approximation for $I$ and to provide a device for gluing together multiple Gaussian approximations so as to approximate the distribution of $I+II$.
The following lemma serves as a convenient device to combine conditional and unconditional high-dimensional Gaussian approximations.
The proof is in \cref{subsec:proof:lem:gluing_separable_array}.

\begin{lemma}
\label{lem:gluing_separable_array}
Let $I, II \in \mathbb{R}^p$ and let $\mathcal{F}$ be a $\sigma$-field such that $II$ is $\mathcal{F}$-measurable.
Assume there exist random vectors $Z_I(\mathcal{F}), Z_I, Z_{II}$ such that
\[
Z_I(\mathcal{F}) \mid \mathcal{F} \sim N\bigl(0,\Sigma_I(\mathcal{F})\bigr), 
~~ 
Z_I \sim N(0,\Sigma_I), 
~~
Z_{II} \sim N(0,\Sigma_{II}),
~~ Z_I \perp\!\!\!\perp (II, Z_{II}),
\]
where $\Sigma_I(\mathcal{F})$ is $\mathcal{F}$-measurable and $\Sigma_I,\Sigma_{II}$ are deterministic positive semidefinite matrices.
Suppose that, for the nonnegative random sequences $\delta_{1,n}(\mathcal{F})$, $\delta_{2,n}(\mathcal{F})$ and the deterministic sequence $\delta_{3,n}$, the following statements hold:
\begin{align*}
    & \sup_{A\in\mathcal{R}_p}\Bigl| \mathbb{P}( I \in A \mid \mathcal{F}) - \mathbb{P}( Z_I(\mathcal{F}) \in A \mid \mathcal{F}) \Bigr|
\le \delta_{1,n}(\mathcal{F}), \\
    & \sup_{A\in\mathcal{R}_p}\Bigl| \mathbb{P}( Z_I(\mathcal{F}) \in A \mid \mathcal{F}) - \mathbb{P}( Z_I \in A ) \Bigr|
\le \delta_{2,n}(\mathcal{F}), \quad  \sup_{A\in\mathcal{R}_p}\Bigl| \mathbb{P}( II \in A ) - \mathbb{P}( Z_{II} \in A ) \Bigr|
\le \delta_{3,n}.
\end{align*}
Let $Z:=Z_I+Z_{II}$ where $Z_I$ and $Z_{II}$ are taken independent, so that $Z\sim N(0,\Sigma_I+\Sigma_{II})$.
Then,
\[
\sup_{A\in\mathcal{R}_p}\Bigl| \mathbb{P}( I+II \in A ) - \mathbb{P}( Z \in A ) \Bigr|
\le \mathbb{E}[\delta_{1,n}(\mathcal{F})]+\mathbb{E}[\delta_{2,n}(\mathcal{F})]+\delta_{3,n}.
\]
\end{lemma}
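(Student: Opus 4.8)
The plan is to fix a rectangle $A\in\mcl R_p$ and telescope from $I+II$ to $Z=Z_I+Z_{II}$ through the two intermediate laws $Z_I(\mcl F)+II$ and $Z_I+II$, i.e.\ along the chain
\be{
\pr(I+II\in A)\ \to\ \pr(Z_I(\mcl F)+II\in A)\ \to\ \pr(Z_I+II\in A)\ \to\ \pr(Z\in A),
}
applying one of the three hypotheses at each arrow. The single device that makes everything work is that the class $\mcl R_p$ is invariant under translations: for any $v\in\mathbb R^p$, the shifted set $A-v:=\{x:x+v\in A\}$ is again a rectangle. This lets me rewrite every event $\{X+Y\in A\}$ as $\{X\in A-Y\}$ with $A-Y$ still a rectangle, so the uniform‑over‑rectangles bounds in the hypotheses can be invoked even after a shift.

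First I would condition on $\mcl F$. Since $II$ is $\mcl F$-measurable, given $\mcl F$ the set $A-II$ is a fixed ($\mcl F$-measurable) rectangle, so $\pr(I+II\in A\mid\mcl F)=\pr(I\in A-II\mid\mcl F)$ and $\pr(Z_I(\mcl F)+II\in A\mid\mcl F)=\pr(Z_I(\mcl F)\in A-II\mid\mcl F)$. Applying the first hypothesis with the random rectangle $A-II$ (the bound holds for \emph{every} rectangle almost surely, hence for this $\mcl F$-measurable one) gives $|\pr(I\in A-II\mid\mcl F)-\pr(Z_I(\mcl F)\in A-II\mid\mcl F)|\le\delta_{1,n}(\mcl F)$ a.s.; taking expectations bounds the first gap by $\E[\delta_{1,n}(\mcl F)]$. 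Repeating the same conditioning with the second hypothesis, and noting that $\pr(Z_I\in A-II)$ is the deterministic map $v\mapsto\pr(Z_I\in A-v)$ evaluated at $v=II$, bounds the second gap by $\E[\delta_{2,n}(\mcl F)]$; here I additionally use $Z_I\perp\!\!\!\perp II$ to write $\pr(Z_I+II\in A)=\E[\pr(Z_I\in A-II)]$.

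For the last arrow I would condition on $Z_I$ instead. Because $Z_I\perp\!\!\!\perp(II,Z_{II})$, I can write $\pr(Z_I+II\in A)=\E_{Z_I}[\pr(II\in A-Z_I)]$ and $\pr(Z\in A)=\pr(Z_I+Z_{II}\in A)=\E_{Z_I}[\pr(Z_{II}\in A-Z_I)]$, where the inner probabilities are the deterministic maps $w\mapsto\pr(II\in A-w)$ and $w\mapsto\pr(Z_{II}\in A-w)$ evaluated at $w=Z_I$. The third hypothesis, applied to the rectangle $A-Z_I$, gives $|\pr(II\in A-Z_I)-\pr(Z_{II}\in A-Z_I)|\le\delta_{3,n}$ pointwise, so the third gap is at most $\delta_{3,n}$. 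Summing the three bounds by the triangle inequality and taking the supremum over $A\in\mcl R_p$ yields the claim.

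The argument is not deep, so the main obstacle is purely one of rigor: justifying that the three hypotheses, stated as almost‑sure (or pointwise) bounds holding uniformly over all \emph{fixed} rectangles, may legitimately be evaluated at the \emph{random} rectangles $A-II$ and $A-Z_I$. This is exactly where the uniform supremum in each hypothesis is essential — since the bound holds simultaneously for every rectangle, it holds for the random one — together with the $\mcl F$-measurability of $II$ and the independence $Z_I\perp\!\!\!\perp(II,Z_{II})$, which are precisely what allow me to factor the mixed expectations in the second and third steps.
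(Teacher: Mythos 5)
Your proposal is correct and follows essentially the same route as the paper's proof: the identical telescoping chain $I+II \to Z_I(\mathcal F)+II \to Z_I+II \to Z_I+Z_{II}$, with shift-invariance of $\mathcal R_p$ used to absorb $II$ (conditionally on $\mathcal F$) and $Z_I$ (via the independence $Z_I \perp\!\!\!\perp (II,Z_{II})$) into the rectangle, each gap then being controlled by the corresponding uniform hypothesis. The rigor point you flag — evaluating the uniform-over-rectangles bounds at the random rectangles $A-II$ and $A-Z_I$ — is handled in the paper exactly as you suggest, by passing to the supremum over fixed rectangles inside the outer expectation.
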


Given \cref{lem:gluing_separable_array}, under suitable regularity conditions, one can establish Gaussian approximation of $\hat{\theta}_{nm,j}$ by combination of (i) \cref{lem:gluing_separable_array}, (ii) the high-dimensional CLT for sum of independent variables (Theorem 2.1 in \citealp{CCKK22}), (iii) Gaussian-to-Gaussian comparison inequality (Proposition 2.1 in \citealp{CCKK22}) and (iv) our Gaussian approximation result for $V$-statistics (\cref{coro:main-kernel-V-inid}).
In particular, the high-dimensional CLT for the sum of independent random vectors (Theorem~2.1 in \citealp{CCKK22}) to $I\mid\mathcal{F}$ gives $\delta_{1,n}(\mathcal{F})$, applying  Gaussian-to-Gaussian comparison bound (Proposition~2.1 in \citealp{CCKK22}) to $N(0,\Sigma_I(\mathcal{F}))$ and $N(0,\Sigma_I)$ gives $\delta_{2,n}(\mathcal{F})$, and applying our Gaussian approximation for the $V$-statistic with evaluation of quantities in terms of the kernel function (\cref{coro:main-kernel-V-inid}) to $II$ gives $\delta_{3,n}$.

\appendix

\vspace{1cm}
\begin{center}
{\LARGE
{\bf Appendix}
}
\end{center}

\section{Auxiliary Results} \label{sec:auxiliary-inid}

\subsection{High-dimensional CLT via generalized exchangeable pairs} \label{subsec:gexch-inid}
Since Theorem~6 in \cite{imai2025gaussian} does not assume identical distributions of observations, we can use the theorem even under i.n.i.d.\ setting.
For convenience, we restate the theorem below. 

\begin{lemma}[Theorem~6 in \cite{imai2025gaussian}]\label{lem:gexch-restate}
Let $(Y,Y')$ be an exchangeable pair of random variables taking values in a measurable space $(E,\mcl E)$. 
Let $\mathsf{W}:E\to\mathbb R^p$ be $\mcl E$-measurable and set $W:=\mathsf{W}(Y)$, $W':=\mathsf{W}(Y')$, and $D:=W'-W$. 
Suppose there exists an antisymmetric $\mcl E^{\otimes2}$-measurable function $\mathsf{G}:E^2\to\mathbb R^p$ (i.e.\ $\mathsf{G}(Y,Y')=-\mathsf{G}(Y',Y)$) such that $G:=\mathsf{G}(Y,Y')$ satisfies
\begin{equation}\label{eq:nlr}
\E[\,G\mid Y\,]=-(W+R)
\end{equation}
for some random vector $R\in\mathbb R^p$. 
Let $\Sigma$ be a $p\times p$ positive semidefinite symmetric matrix and write $\ul\sigma:=\min_{j\in[p]}\sqrt{\Sigma_{jj}}>0$. 
Then there exists a universal constant $C>0$ such that for any $\eps>0$,
\begin{align}\label{eq:gech}
& \sup_{A\in\mathcal R_p}\bigl|\pr(W\in A)-\pr(Z\in A)\bigr|\notag\\
& \;\le\; \frac{C}{\ul\sigma}\Bigl\{
\E\sbra{\|R^\eps\|_\infty}\sqrt{\log p}
+\eps^{-1}\E\sbra{\|V^\eps\|_\infty}(\log p)^{3/2}
+\eps^{-3}\E\sbra{\Gamma^\eps}(\log p)^{7/2}
+\eps\sqrt{\log p}\Bigr\},
\end{align}
where $Z\sim N(0,\Sigma)$ and $\beta:=\eps^{-1}\log p$, and we define
\[
R^\eps \coloneqq R+\E\!\bigl[\,G\,\mathbf 1_{\{\|D\|_\infty>\beta^{-1}\}}\mid Y\bigr],\qquad
V^\eps \coloneqq \frac{1}{2}\E\!\bigl[\,GD^\top\,\mathbf 1_{\{\|D\|_\infty\le \beta^{-1}\}}\mid Y\bigr]-\Sigma,
\]
\[
\Gamma^\eps \coloneqq\max_{j,k,l,m\in[p]}\E\!\bigl[\,|G_jD_kD_lD_m|\,\mathbf 1_{\{\|D\|_\infty\le \beta^{-1}\}}\mid Y\bigr].
\]
\end{lemma}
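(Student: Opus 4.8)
The plan is to observe that the statement is verbatim Theorem~6 of \cite{imai2025gaussian}, whose proof is built entirely from Stein's method for exchangeable pairs and nowhere invokes that the underlying observations share a common law. The exchangeable-pair hypothesis, the antisymmetry of $\mathsf G$, and the linear regression identity \eqref{eq:nlr} are all distribution-agnostic, so the result transfers without change to the i.n.i.d.\ regime. For completeness I would reconstruct the argument as follows, assuming only the generic Stein/anti-concentration machinery.

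First I would reduce the rectangle probabilities to smooth test functions. For $A\in\mcl R_p$ I replace $\mathbf 1_A$ by a smooth surrogate $h=h_{A}$ obtained by composing a soft-max smoothing at scale $\beta=\eps^{-1}\log p$ of the coordinate constraints with a one-dimensional mollification at scale $\eps$. Gaussian anti-concentration over rectangles controls the replacement error by $C\,\ul\sigma^{-1}\eps\sqrt{\log p}$, which is exactly the last term of \eqref{eq:gech}, and reduces the task to bounding $|\E[h(W)]-\E[h(Z)]|$ with explicit control on the derivatives $\partial^k h$ for $k\le 3$.

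Next I would solve the multivariate Stein equation $\trace(\Sigma\nabla^2 f)(w)-w^\top\nabla f(w)=h(w)-\E[h(Z)]$ through the Ornstein--Uhlenbeck representation $f(w)=-\int_0^1\frac1{2t}\bra{\E[h(\sqrt t\,w+\sqrt{1-t}\,Z)]-\E[h(Z)]}\,dt$, which yields bounds on the derivatives of $f$ in terms of those of $h$ and $\ul\sigma$. I then exploit the exchangeable-pair structure: since $\mathsf G$ is antisymmetric and $(Y,Y')$ is exchangeable, $\E[\langle G,\nabla f(W)+\nabla f(W')\rangle]=0$, while \eqref{eq:nlr} gives $\E[W^\top\nabla f(W)]=-\E[\langle G,\nabla f(W)\rangle]-\E[R^\top\nabla f(W)]$. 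Combining these with a third-order Taylor expansion of $\nabla f(W')=\nabla f(W+D)$ and truncating $D$ at level $\beta^{-1}$ rewrites $\E[h(W)]-\E[h(Z)]$ as the sum of a first-order drift term governed by $R^\eps$ (the tail of $G$ on $\{\|D\|_\infty>\beta^{-1}\}$ being absorbed into $R^\eps$), a second-order covariance-mismatch term governed by $\trace(V^\eps\nabla^2 f)$, and a third-order remainder governed by $\Gamma^\eps$. Matching each against the derivative bounds for $f$ reproduces the remaining three terms of \eqref{eq:gech}, with the powers of $\log p$ dictated by the smoothing scales $\eps$ and $\beta$.

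The main obstacle is dimension-robustness. Naive operator-norm bounds on $\nabla^2 f$ and $\nabla^3 f$ would carry factors polynomial in $p$; to obtain the stated polylogarithmic dependence one must use the additively separable, coordinatewise-sparse structure of the smoothed rectangle test function, so that the Hessian and third-derivative contractions against $V^\eps$ and against the truncated products $G_jD_kD_lD_m$ only see $\ell_1$--$\ell_\infty$ pairings. Securing the exponents $(3/2,7/2)$ on $\log p$ in the $V^\eps$ and $\Gamma^\eps$ terms, and choosing the truncation threshold $\beta^{-1}$ so that the tail contribution to $R^\eps$ is balanced against the remainder in $\Gamma^\eps$, is the delicate bookkeeping on which the argument hinges; everything else is routine once the derivative bounds are in hand.
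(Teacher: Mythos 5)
Your proposal is correct and takes the same approach as the paper: the paper gives no proof of this lemma beyond observing, exactly as you do in your first paragraph, that it is a verbatim restatement of Theorem~6 in \cite{imai2025gaussian}, whose exchangeable-pair hypotheses (exchangeability of $(Y,Y')$, antisymmetry of $\mathsf{G}$, and the regression identity) are distribution-agnostic and therefore valid under i.n.i.d.\ sampling. Your subsequent Stein-method reconstruction is a plausible sketch of how the cited theorem itself is proved, but the paper neither reproduces nor requires it.
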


\subsection{Maximal Inequality} \label{subsec:inid:maximal-inequality}

The following inequalities are the extension of Lemma~3 in \cite{imai2025gaussian} to i.n.i.d. setting.
\begin{lemma} \label{lem:max-rosenthal-inid}
For $j\in[p]$ and $(i,m)\in I_{n,2}$ , let $\psi_{j,(i,m)} \in L^4(P_i\otimes P_m)$ be degenerate symmetric kernels of order $2$.
There exists a universal constant $C$ such that
\begin{align}
      & \E\left[ \max_{j\in[p]} \sum_{i=1}^n \int \left| \sum_{m\in[n]:m>i} \psi_j(x_i,X_m) \right|^4 dP_i(x_i) \right] \nonumber  \\
      & \le C \Bigg(\max_{j\in[p]} \sum_{i=1}^n \left\|\sum_{m\neq i}P_m(\psi_j^2)\right\|_{L^2(\mathbb P)}^2  \log^2 p \nonumber \\
      & \quad\quad + \max_{j\in[p]} \sum_{i=1}^n\sum_{m\neq i} \|\psi_j\|_{L^4(P_i\otimes P_m)}^4 \log^3 p + \E\left[ \max_{j\in[p]} \max_{m\in[n]} \sum_{i\neq m} P_i\psi^4_j(X_m) \right] \log^4 p\Bigg) ,\label{eq:influence-mom-inid}
\end{align}
and
\begin{align}
      &  \E\left[ \max_{j\in[p]} \sum_{i=1}^n \int \left| \sum_{m\in[n]:m<i} \psi_j(x_i,X_m) \right|^4 dP_i(x_i) \right] \nonumber  \\
      & \le C \Bigg( \max_{j\in[p]} \sum_{i=1}^n \left\|\sum_{m\neq i}P_m(\psi_j^2)\right\|_{L^2(\mathbb P )}^2  \log^2 p \nonumber \\
      & \quad\quad + \max_{j\in[p]} \sum_{i=1}^n\sum_{m\neq i} \|\psi_j\|_{L^4(P_i\otimes P_m)}^4 \log^3 p + \E\left[ \max_{j\in[p]} \max_{m\in[n]} \sum_{i\neq m} P_i\psi^4_j(X_m) \right] \log^4 p\Bigg) ,\label{eq:influence-mom-inid-<}
\end{align}
and
\begin{align} 
    & \E\left[ \max_{j\in[p]} \sum_{i=1}^n \left| \sum_{m\in[n]:m\neq i} \psi_j(X_i,X_m) \right|^4  \right] \nonumber\\
    & \le C\Bigg( \max_{j\in[p]} \sum_{i=1}^n \left\|\sum_{m\neq i}P_m(\psi_j^2)\right\|_{L^2(\mathbb P)}^2  \log^2 p \nonumber \\
      & \quad + \max_{j\in[p]} \sum_{i=1}^n\sum_{m\neq i} \|\psi_j\|_{L^4(P_i\otimes P_m)}^4 \log^3 p + \E\left[ \max_{j\in[p]} \max_{m\in[n]} \sum_{i\neq m} P_i\psi^4_j(X_m) \right] \log^4 p \nonumber\\
      & \quad+ \left\|  \max_{j\in[p]} \max_{i\in[n]} \sum_{m\neq i}   P_m \psi_j^2 (X_i) \right\|_{L^2(\mathbb P )}^2\log^3(np) + \E\left[ \max_{j\in[p]} \max_{(i,m)\in I_{n,2}}   \psi_j^4(X_i,X_m) \right] \log^5(np)  \Bigg). \label{eq:influence-mom-inid-2}
\end{align}
\end{lemma}

\section{Proof of Main Result} \label{sec:Proof-main-results-inid}

\subsection{Proof of \cref{thm:main-inid}} \label{subsec:proof:thm:main-inid}

We may assume $\sigma_j=1$ for all $j\in[p]$ without loss of generality.  
For the rest of the proof, we proceed in 5 steps.
\paragraph{Step 1} 
Regarding $X=(X_i)_{i=1}^n$ as a random element taking values in the measurable space $\otimes_{i=1}^n(S_i,\mcl S_i)$, we are going to apply \cref{lem:gexch-restate} to 
\[
\mathsf{W}(X):=\bra{J_{2,X}(\boldsymbol\psi_{1})-\E[J_{2,X}(\boldsymbol\psi_{1})],\dots,J_{2,X}(\boldsymbol\psi_{p})-\E[J_{2,X}(\boldsymbol\psi_{p})]}^\top.
\]
For this purpose, we need to construct an appropriate exchangeable pair $(X,X')$ and an antisymmetric function $\mathsf{G}$. 
Let $X^*=(X^*_i)_{i=1}^n$ be an independent copy of $X=(X_i)_{i=1}^n$. 
Also, let $\alpha$ be a random index uniformly distributed on $[n]$ and such that $X,X^*$ and $\alpha$ are independent. 
Then, define $X'=(X'_i)_{i=1}^n$ as $X_i':=X_i^*$ if $i=\alpha$ and $X_i':=X_i$ otherwise.
It is well-known that $(X,X')$ is an exchangeable pair. 

In addition, define a random vector $G = \mathsf{G}(X,X')$ in $\mathbb{R}^p$ as $G_j \coloneqq n D_{j,1} + \frac{n}{2}D_{j,2}$ for $j = 1,\dots, p$, where
\begin{align} \label{eq:def-D}
\begin{split}
    & D_{j,1} \coloneqq \sum_{i=1}^n \{\pi_{1,i} \psi_j(X_i') - \pi_{1,i} \psi_j(X_i)\}, \\
    & D_{j,2} \coloneqq \sum_{1\le i<m \le n} \{\pi_{2,im} \psi_j(X_i',X_m') - \pi_{2,im} \psi_j(X_i,X_m)\}.
\end{split}
\end{align}
$ \mathsf{G}$ is antisymmetric by construction. 
From Lemma 3.2 in \cite{DoPe17}, it holds that
\begin{align*}
    \E[D_{j,1} \mid X] = - \frac{1}{n}\sum_{i=1}^n\pi_{1,i} \psi_j(X_i), \quad \E[D_{j,2} \mid X] = -\frac{2}{n} \sum_{1\le i<m \le n}\pi_{2,im} \psi_j(X_i,X_m),
\end{align*}
so we have $\E[G\mid X] = -W$.
Therefore,  applying \cref{lem:gexch-restate}, we obtain for any $\varepsilon>0$
\begin{align*}
    & \sup_{A\in\mathcal{R}_p} |\pr(W\in A) - \pr(Z\in A)| \\
    & \lesssim 
\E\sbra{\|R^\eps\|_\infty}\sqrt{\log p}
+\eps^{-1}\E\sbra{\|V^\eps\|_\infty}(\log p)^{3/2}
+\eps^{-3}\E\sbra{\Gamma^\eps}(\log p)^{7/2}
+\eps\sqrt{\log p},
\end{align*}
where $R^\eps$, $V^\eps$ and $\Gamma^\eps$ are defined in the same way as in \cref{lem:gexch-restate} with $R=0$ and $(Y,Y')$ replaced by $(X,X')$.
From the same argument as proof of Theorem~2 in \cite{imai2025gaussian}, 
\begin{align*}
    &  \sup_{A\in\mathcal{R}_p} |\pr(W\in A) - \pr(Z\in A)|\\
& \lesssim \frac{n\sqrt{\log p}}{\beta \wedge \beta^3} \pr(\|D\|_\infty > \beta^{-1}) + \varepsilon^{-1}\E[\|V\|_\infty](\log p)^{3/2} + \varepsilon^{-3}\E[\Gamma_1 + \Gamma_2](\log p)^{7/2} + \varepsilon\sqrt{\log p},
\end{align*}
where $V \coloneqq \frac{1}{2}\E[GD^\top \mid X] - \Sigma$ and $\Gamma_s \coloneqq n \max_{j\in[p]} \E[|D_{j,s}|^4 \mid X]$ for $s=1,2$. 
In the remaining proof, we will bound the quantities on the right-hand side and then choose $\varepsilon$ appropriately. 

\paragraph{Step 2}
In this step, we bound $\E[\|V\|_\infty](\log p)^{3/2}$.
First, we derive a $U$-statistic representation of $V_{jk} = \frac{1}{2}\E[G_jD_k\mid X] - \E[W_jW_k]$ for $j,k\in[p]$.
For $j,k\in[p]$, observe that
\begin{align*}
    G_j D_k = \left(n D_{j,1} + \frac{n}{2}D_{j,2} \right) \left( D_{k,1} + D_{k,2}\right).
\end{align*}
From \cref{eq:def-D}, we can see that
\begin{align*}
    & n\E[D_{j,1}D_{k,1} \mid X] \\
    &= n\E\left[ \sum_{i=1}^n\sum_{m=1}^n \{\pi_{1,i} \psi_j(X_i') - \pi_{1,i} \psi_j(X_i)\} \{\pi_{1,m} \psi_k(X_m') - \pi_{1,m} \psi_k(X_m)\} \mid X \right] \\
    &= n\E\left[ \sum_{i=1}^n\sum_{m=1}^n 1_{\{\alpha=i\}} 1_{\{\alpha=m\}} \{\pi_{1,i} \psi_j(X_i^*) - \pi_{1,i} \psi_j(X_i)\} \{\pi_{1,m} \psi_k(X_m^*) - \pi_{1,m} \psi_k(X_m)\} \mid X \right] \\
    &= n\sum_{i=1}^n \E\left[ 1_{\{\alpha=i\}} \{\pi_{1,i} \psi_j(X_i^*) - \pi_{1,i} \psi_j(X_i)\}\{\pi_{1,i} \psi_k(X_i^*) - \pi_{1,i} \psi_k(X_i)\} \mid X\right] \\
    &= n\sum_{i=1}^n \E[1_{\{\alpha=i\}}] \Big( \E[\pi_{1,i} \psi_j(X_i^*) \pi_{1,i} \psi_k(X_i^*)] + \pi_{1,i} \psi_j(X_i)\pi_{1,i} \psi_k(X_i)   \Big) \\
    &= \sum_{i=1}^n\Big( P_i\{\pi_{1,i} \psi_j(X_i^*) \pi_{1,i} \psi_k(X_i^*)\} + \pi_{1,i} \psi_j(X_i)\pi_{1,i} \psi_k(X_i)   \Big) \\
    &= \sum_{i=1}^n\Big( P_i\{\pi_{1,i} \psi_j(X_i) \pi_{1,i} \psi_k(X_i)\} + \pi_{1,i} \psi_j(X_i)\pi_{1,i} \psi_k(X_i)   \Big) ,
\end{align*}
where the second equality follows from the definition of $(X_i')_{i=1}^n$, the third equality holds because the event $\{\alpha=i\}$ and $\{\alpha=m\}$ are disjoint for $i\neq m$,  the fourth equality follows from the degeneracy of Hoeffding projections ($\E[\pi_{1,i}\psi_j(X_i^*)]=0$), the fifth equality holds since $\alpha$ is uniformly distributed on $[n]$, and the final equality follows from the fact that $(X_i^*)_{i=1}^n$ is an independent copy of $(X_i)_{i=1}^n$.

Similarly, from \cref{eq:def-D}, we can see that
\begin{align*}
    & n\E[D_{j,1}D_{k,2} \mid X] \\
    & = n \E\Bigg[ \left(   \sum_{i=1}^n \{\pi_{1,i} \psi_j(X_i') - \pi_{1,i} \psi_j(X_i)\}\right) \left( \sum_{1\le i<m \le n} \{\pi_{2,im} \psi_k(X_i',X_m') - \pi_{2,im} \psi_k(X_i,X_m)\}\right)   \mid X\Bigg] \\
    & = n \E\Bigg[  \left(   \sum_{i=1}^n 1_{\{\alpha=i\}}\{\pi_{1,i} \psi_j(X_i^*) - \pi_{1,i} \psi_j(X_i)\}\right) \Bigg( \sum_{1\le i<m \le n} 1_{\{\alpha=i\}}\{\pi_{2,im} \psi_k(X_i^*,X_m) - \pi_{2,im} \psi_k(X_i,X_m)\} \\
    &\qquad\qquad\qquad + \sum_{1\le i<m \le n} 1_{\{\alpha=m\}}\{\pi_{2,im} \psi_k(X_i,X_m^*) - \pi_{2,im} \psi_k(X_i,X_m)\} \Bigg) \mid X\Bigg] \\
    & = n \sum_{1\le i<m \le n} \E\left[ 1_{\{\alpha=i\}}\{\pi_{1,i} \psi_j(X_i^*) - \pi_{1,i} \psi_j(X_i)\}\{\pi_{2,im} \psi_k(X_i^*,X_m) - \pi_{2,im} \psi_k(X_i,X_m)\}  \mid  X\right] \\
    & \quad + n \sum_{1\le i<m \le n} \E\left[ 1_{\{\alpha=m\}}\{\pi_{1,m} \psi_j(X_m^*) - \pi_{1,m} \psi_j(X_m)\}\{\pi_{2,im} \psi_k(X_i,X_m^*) - \pi_{2,im} \psi_k(X_i,X_m)\}  \mid  X\right] \\
    & = \sum_{1\le i<m \le n} \Big( P_i\{\pi_{1,i}\psi_j(X_i^*)\pi_{2,im}\psi_k(X_i^*,X_m)\} + \pi_{1,i}\psi_j(X_i)\pi_{2,im}\psi_k(X_i,X_m)  \Big) \\
    & \quad + \sum_{1\le i<m \le n} \Big( P_m\{\pi_{1,m}\psi_j(X_m^*)\pi_{2,im}\psi_k(X_i,X_m^*)\} + \pi_{1,m}\psi_j(X_m)\pi_{2,im}\psi_k(X_i,X_m)  \Big) \\
    & = \sum_{i=1}^n\sum_{m\neq i}^{n-1} \Big( P_i\{\pi_{1,i}\psi_j(X_i)\pi_{2,im}\psi_k(X_i,X_m)\} + \pi_{1,i}\psi_j(X_i)\pi_{2,im}\psi_k(X_i,X_m)  \Big),
\end{align*}
where the final equality follows from the symmetric property of the Hoeffding projections ($\pi_{2,im}\psi_k(X_i,X_m) = \pi_{2,mi}\psi_k(X_m,X_i)$).
In the same way, it holds that
\begin{align*}
    & \frac{n}{2}\E[D_{j,2}D_{k,1} \mid X] \\
    &= \frac{1}{2} \sum_{i=1}^n\sum_{m\neq i}^{n-1} \Big( P_i\{\pi_{1,i}\psi_k(X_i)\pi_{2,im}\psi_j(X_i,X_m)\} + \pi_{1,i}\psi_k(X_i)\pi_{2,im}\psi_j(X_i,X_m) \Big).
\end{align*}
Also, from \cref{eq:def-D}, we can see that
\begin{align*}
    & \frac{n}{2}\E[D_{j,2}D_{k,2} \mid X] \\
    & = \frac{n}{2}\E\Bigg[ \left( \sum_{1\le i<m \le n} \{\pi_{2,im} \psi_j(X_i',X_m') - \pi_{2,im} \psi_j(X_i,X_m)\}\right) \\
    & \qquad\qquad\qquad\times \left( \sum_{1\le i<m \le n} \{\pi_{2,im} \psi_k(X_i',X_m') - \pi_{2,im} \psi_k(X_i,X_m)\}\right) \mid X  \Bigg] \\
    & = \frac{n}{2}\E\Bigg[ \bigg( \sum_{1\le i<m \le n} 1_{\{\alpha=i\}}\{\pi_{2,im} \psi_j(X_i^*,X_m) - \pi_{2,im} \psi_j(X_i,X_m)\} \\
    & \qquad\qquad\qquad\qquad\qquad+ 1_{\{\alpha=m\}}\{\pi_{2,im} \psi_j(X_i,X_m^*) - \pi_{2,im} \psi_j(X_i,X_m)\}\bigg) \\
    & \quad\quad\quad\quad\times \bigg( \sum_{1\le i<m \le n} 1_{\{\alpha=i\}}\{\pi_{2,im} \psi_k(X_i^*,X_m) - \pi_{2,im} \psi_k(X_i,X_m)\} \\
    & \qquad\qquad\qquad\qquad\qquad\qquad+ 1_{\{\alpha=m\}}\{\pi_{2,im} \psi_k(X_i,X_m^*) - \pi_{2,im} \psi_k(X_i,X_m)\}\bigg) \mid X \Bigg] \\
    & = \frac{n}{2} \sum_{i}\sum_{m:i<m}\sum_{l:i<l} \E\Bigg[ 1_{\{\alpha=i\}}\{\pi_{2,im} \psi_j(X_i^*,X_m) - \pi_{2,im} \psi_j(X_i,X_m)\}\{\pi_{2,il} \psi_k(X_i^*,X_l) - \pi_{2,il} \psi_k(X_i,X_l)\} \mid X  \Bigg]\\
    & ~ + \frac{n}{2} \sum_{i}\sum_{m:i<m}\sum_{l:l<i} \E\Bigg[ 1_{\{\alpha=i\}}\{\pi_{2,im} \psi_j(X_i^*,X_m) - \pi_{2,im} \psi_j(X_i,X_m)\}\{\pi_{2,li} \psi_k(X_l,X_i^*) - \pi_{2,li} \psi_k(X_l,X_i)\} \mid X  \Bigg] \\
    & ~ + \frac{n}{2} \sum_{i}\sum_{m:i<m} \sum_{l:m<l} \\
    &\qquad\qquad\qquad\E\Bigg[  1_{\{\alpha=m\}}\{\pi_{2,im} \psi_j(X_i,X_m^*) - \pi_{2,im} \psi_j(X_i,X_m)\} \{\pi_{2,ml} \psi_k(X_m^*,X_l) - \pi_{2,ml} \psi_k(X_m,X_l)\} \mid X\Bigg] \\
    & ~ + \frac{n}{2} \sum_{i}\sum_{m:i<m} \sum_{l:l<m} \\
    &\qquad\qquad\qquad\E\Bigg[  1_{\{\alpha=m\}}\{\pi_{2,im} \psi_j(X_i,X_m^*) - \pi_{2,im} \psi_j(X_i,X_m)\} \{\pi_{2,lm} \psi_k(X_l,X_m^*) - \pi_{2,lm} \psi_k(X_l,X_m)\} \mid X \Bigg] \\
    & = \frac{1}{2} \sum_{i=1}^n\sum_{m\neq i}^{n-1}\sum_{l\neq i}^{n-1} \Big( P_i\{\pi_{2,im} \psi_j(X_i^*,X_m)\pi_{2,il} \psi_k(X_i^*,X_l)\} + \pi_{2,im} \psi_j(X_i,X_m)\pi_{2,il} \psi_k(X_i,X_l) \Big). 
\end{align*}
Then, since $\E[W_jW_k] = \frac{1}{2}\E[G_jD_k]$, we have
\begin{align*}
    2V_{jk} = \sum_{i=1}^n  \chi^{(1,1)}_{i,jk}(X_i) + \sum_{i=1}^n\sum_{m\neq i}^{n-1} \{\chi^{(1,2)}_{im,jk}(X_i,X_m) + \chi^{(2,1)}_{im,jk}(X_i,X_m)\} +  \sum_{i=1}^n\sum_{m\neq i}^{n-1}\sum_{l\neq i}^{n-1} \chi_{iml,jk}^{(2,2)}(X_i,X_m,X_l),
\end{align*}
with
\begin{align*}
    & \chi^{(1,1)}_{i,jk}(X_i) \coloneqq  \left( \pi_{1,i}\psi_{j}(X_i)\pi_{1,i}\psi_{k}(X_i) - P_i\{ \pi_{1,i}\psi_{j}(X_i)\pi_{1,i}\psi_{k}(X_i)\}  \right), \\
    & \chi^{(1,2)}_{im,jk}(X_i,X_m) \coloneqq   \pi_{1,i}\psi_j(X_i)\pi_{2,im}\psi_k(X_i,X_m) + P_i\{ \pi_{1,i}\psi_j(X_i)\pi_{2,im}\psi_k(X_i,X_m)\}, \\
    & \chi^{(2,1)}_{im,jk}(X_i,X_m) \coloneqq \frac{1}{2}\left( \pi_{1,i}\psi_k(X_i)\pi_{2,im}\psi_j(X_i,X_m) + P_i\{ \pi_{1,i}\psi_k(X_i)\pi_{2,im}\psi_j(X_i,X_m)\} \right), \\
    & \chi^{(2,2)}_{iml,jk}(X_i,X_m,X_l)  \coloneqq \frac{1}{2}  \bigl( \pi_{2,im}\psi_j(X_i,X_m)\pi_{2,il}\psi_k(X_i,X_l) - P_iP_mP_l\{\pi_{2,im}\psi_j(X_i,X_m)\pi_{2,il}\psi_k(X_i,X_l)\}  \\
    & \qquad\qquad\qquad + P_i\{\pi_{2,im}\psi_j(X_i,X_m)\pi_{2,il}\psi_k(X_i,X_l)\} - P_lP_m[P_i\{\pi_{2,im}\psi_j(X_i,X_m)\pi_{2,il}\psi_k(X_i,X_l)\}]\bigr) .
\end{align*}
Observe that
\begin{align*}
    & \chi^{(1,2)}_{im,jk}(X_i,X_m) \\  &= \Big( \pi_{1,i}\psi_j(X_i)\pi_{2,im}\psi_k(X_i,X_m) - P_i\{ \pi_{1,i}\psi_j(X_i)\pi_{2,im}\psi_k(X_i,X_m)\} \Big)  + 2P_i\{ \pi_{1,i}\psi_j(X_i)\pi_{2,im}\psi_k(X_i,X_m)\} \\
    & \eqqcolon \bar{\chi}_{im,jk}^{(1,2)}(X_i,X_m) + 2P_i\{ \pi_{1,i}\psi_j(X_i)\pi_{2,im}\psi_k(X_i,X_m)\},
\end{align*}
and
\begin{align*}
    &\chi^{(2,1)}_{im,jk}(X_i,X_m) \\
    &= \frac{1}{2}\Big( \pi_{1,i}\psi_k(X_i)\pi_{2,im}\psi_j(X_i,X_m) - P_i\{ \pi_{1,i}\psi_k(X_i)\pi_{2,im}\psi_j(X_i,X_m)\} \Big)  + P_i\{ \pi_{1,i}\psi_k(X_i)\pi_{2,im}\psi_j(X_i,X_m)\} \\
    & \eqqcolon \bar{\chi}^{(2,1)}_{im,jk}(X_i,X_m) + P_i\{ \pi_{1,i}\psi_k(X_i)\pi_{2,im}\psi_j(X_i,X_m)\},
\end{align*}
and
\begin{align*}
    &\chi^{(2,2)}_{iml,jk}(X_i,X_m,X_l) \\
    &= \frac{1}{2}\Big(  \pi_{2,im}\psi_j(X_i,X_m)\pi_{2,il}\psi_k(X_i,X_l) - P_i\{\pi_{2,im}\psi_j(X_i,X_m)\pi_{2,il}\psi_k(X_i,X_l)\} \Big) \\
    & \quad + \Big( P_i\{\pi_{2,im}\psi_j(X_i,X_m)\pi_{2,il}\psi_k(X_i,X_l)\} - P_iP_mP_l\{\pi_{2,im}\psi_j(X_i,X_m)\pi_{2,il}\psi_k(X_i,X_l)\}  \Big) \\
    & \eqqcolon \bar\chi^{(2,2)}_{iml,jk}(X_i,X_m,X_l)  +  \Big( P_i\{\pi_{2,im}\psi_j(X_i,X_m)\pi_{2,il}\psi_k(X_i,X_l)\} - P_iP_mP_l\{\pi_{2,im}\psi_j(X_i,X_m)\pi_{2,il}\psi_k(X_i,X_l)\}  \Big).
\end{align*}
Therefore, letting
\begin{align*}
    & \varphi^{(1,2)}_{m,jk}(X_m) \coloneqq \sum_{i:m\neq i}2P_i\{ \pi_{1,i}\psi_j(X_i)\pi_{2,im}\psi_k(X_i,X_m)\}, \\
    & \varphi_{m,jk}^{(2,1)}(X_m) \coloneqq \sum_{i:m\neq i}P_i\{ \pi_{1,i}\psi_k(X_i)\pi_{2,im}\psi_j(X_i,X_m)\}, \\
    & \varphi^{(2,2)}_{ml,jk}(X_m,X_l) \coloneqq \sum_{i:m,l\neq i} \Big( P_i\{\pi_{2,im}\psi_j(X_i,X_m)\pi_{2,il}\psi_k(X_i,X_l)\} - P_iP_mP_l\{\pi_{2,im}\psi_j(X_i,X_m)\pi_{2,il}\psi_k(X_i,X_l)\}  \Big),
\end{align*}
we have
\begin{align*}
    2V_{jk} &= \sum_{i=1}^n  \chi^{(1,1)}_{i,jk}(X_i) + \sum_{i=1}^n\sum_{m\neq i}^{n-1} \{\bar\chi^{(1,2)}_{im,jk}(X_i,X_m) + \bar\chi^{(2,1)}_{im,jk}(X_i,X_m)\} +  \sum_{i=1}^n\sum_{m\neq i}^{n-1}\sum_{l\neq i}^{n-1} \bar\chi_{iml,jk}^{(2,2)}(X_i,X_m,X_l) \\
    & \quad + \sum_{m=1}^n \{\varphi^{(1,2)}_{m,jk}(X_m) + \varphi^{(2,1)}_{m,jk}(X_m)\} + \sum_{m=1}^n\sum_{l=1}^n \varphi^{(2,2)}_{ml,jk}(X_m,X_l).
\end{align*}
Also, define
\begin{align*}
    & \tilde{\chi}^{(1,2)}_{im,jk}(X_i,X_m) \coloneqq \frac{1}{2}\{\bar\chi_{im,jk}^{(1,2)}(X_i,X_m) + \bar\chi_{mi,jk}^{(1,2)}(X_m,X_i)\}, \\
    & \tilde{\chi}^{(2,1)}_{im,jk}(X_i,X_m) \coloneqq \frac{1}{2}\{\bar\chi_{im,jk}^{(2,1)}(X_i,X_m) + \bar\chi_{mi,jk}^{(2,1)}(X_m,X_i)\}, \\
    & \tilde{\chi}^{(2,2),\mathtt{diag}}_{im,jk}(X_i,X_m) \coloneqq \frac{1}{2}\{ \bar\chi^{(2,2)}_{imm,jk}(X_i,X_m,X_m) +  \bar\chi^{(2,2)}_{mii,jk}(X_m,X_i,X_i)\}, \\
    & \tilde{\chi}^{(2,2)}_{iml,jk}(X_i,X_m,X_l) \coloneqq \frac{1}{6} \sum_{\sigma \in \mathbb{S}^3} \bar\chi^{(2,2)}_{\sigma(1)\sigma(2)\sigma(3),jk} (X_{\sigma(1)}, X_{\sigma(2)}, X_{\sigma(3)}), \\
    & \tilde{\varphi}_{ml,jk}^{(2,2)}(X_m,X_l) \coloneqq \frac{1}{2}\{\varphi^{(2,2)}_{ml,jk}(X_m,X_l) + \varphi^{(2,2)}_{lm,jk}(X_l,X_m)\}, \quad \varphi_{m,jk}^{(2,2),\mathtt{diag}}(X_m) \coloneqq  \varphi^{(2,2)}_{mm,jk}(X_m,X_m),
\end{align*}
where $\mathbb{S}^3$ is the symmetric group of degree $3$.
We use boldface to indicate families of kernels over their sample coordinates; for example $\boldsymbol{\tilde{\chi}}_{jk}^{(1,2)}\coloneqq\{\tilde{\chi}^{(1,2)}_{im,jk}(X_i,X_m)\}_{(i,m)\in I_{n,2}}$.
Noting that $\max_{(j,k)\in[p]^2} J_2(\boldsymbol{\tilde{\chi}}^{(2,1)}_{jk}) = \frac{1}{2} \max_{(j,k)\in[p]^2} J_2(\boldsymbol{\tilde{\chi}^{(1,2)}}_{jk})$ and  $\max_{(j,k)\in[p]^2} J_1(\boldsymbol{\varphi}_{jk}^{(2,1)})=\frac{1}{2}\max_{(j,k)\in[p]^2} J_1(\boldsymbol{\varphi}_{jk}^{(1,2)})$,
we can see that
\begin{align*}
    & \E[\|2V\|_\infty] \\
    & \le \E\left[ \max_{(j,k)\in[p]^2} |J_1(\boldsymbol\chi_{jk}^{(1,1)})| \right] + \frac{3}{2}\E\left[ \max_{(j,k)\in[p]^2} | J_2(\boldsymbol{\tilde{\chi}}_{jk}^{(1,2)})| \right]\\
    & \quad + \E\left[ \max_{(j,k)\in[p]^2} |J_2(\tilde{\boldsymbol\chi}^{(2,2), \mathtt{diag}}_{jk})| \right]  + \E\left[ \max_{(j,k)\in[p]^2} |J_3(\tilde{\boldsymbol\chi}_{jk}^{(2,2)})| \right] \\
    & \quad + \frac{3}{2}\E\left[ \max_{(j,k)\in[p]^2} |J_1(\boldsymbol\varphi_{jk}^{(1,2)})| \right] +\E\left[ \max_{(j,k)\in[p]^2} |J_2(\tilde{\boldsymbol\varphi}^{(2,2)}_{jk})| \right] +  \E\left[ \max_{(j,k)\in[p]^2} |J_1(\boldsymbol\varphi^{(2,2), \mathtt{diag}}_{jk})| \right].
\end{align*}

In order to obtain a simple bound on $\E[\|V\|_\infty]$, we introduce the following lemma. 
The proof of this lemma is in \cref{subsec:proof-lem:inid:eval-step2}.
\begin{lemma} \label{lem:inid:eval-step2}
Under the assumptions for \cref{thm:main-inid}, there exists a universal constant $C$ such that
\begin{align}
    & \E\left[ \max_{(j,k)\in[p]^2} |J_1(\boldsymbol\chi_{jk}^{(1,1)})| \right] (\log p)^{3/2} \le  C  \sqrt{\left(\Delta_{2,*}^{(1)}(1) +\Delta_{2,*}^{(2)}(1)\right)\log^5 (np)}, \label{eq:inid:eval-step2-1}\\
    & \E\left[ \max_{(j,k)\in[p]^2} | J_2(\tilde{\boldsymbol\chi}_{jk}^{(1,2)})| \right](\log p)^{3/2} +\E\left[ \max_{(j,k)\in[p]^2} | J_1(\boldsymbol\varphi_{jk}^{(1,2)})| \right] (\log p)^{3/2} \nonumber\\
    & \quad \le C \Bigg(  \sqrt{\Delta_1^{(1)}} \log^2 (np) +  \left(\max_{j\in[p]} \sum_{i=1}^n \|\pi_{1,i}\psi_j\|_{L^2(P_i)}^2 \right)^{1/2} \left( \Delta_{2,*}^{(5)}(2) \log^7 (np) \right)^{1/4} \nonumber\\
    & \qquad\qquad\qquad\qquad\qquad\qquad\qquad\qquad\qquad\qquad\qquad + \sqrt{(\Delta_{2,*}(1) + \Delta_{2,*}(2)) \log^5 (np)}  \Bigg), \label{eq:inid:eval-step2-2}\\
    & \E\left[ \max_{(j,k)\in[p]^2} |J_2(\tilde{\boldsymbol\chi}^{(2,2), \mathtt{diag}}_{jk})| \right] (\log p)^{3/2} + \E\left[ \max_{(j,k)\in[p]^2} |J_3(\tilde{\boldsymbol\chi}^{(2,2)}_{jk})| \right] (\log p)^{3/2} \nonumber\\
    & \quad\quad\quad +\E\left[ \max_{(j,k)\in[p]^2} |J_2(\tilde{\boldsymbol\varphi}^{(2,2)}_{jk})| \right](\log p)^{3/2} +  \E\left[ \max_{(j,k)\in[p]^2} |J_1(\boldsymbol\varphi^{(2,2), \mathtt{diag}}_{jk})| \right](\log p)^{3/2}\nonumber \\
    & \quad  \le  C \Bigg( \sqrt{\Delta_{1}^{(0)}} \log^3 (np) + \sqrt{\Delta_{2,*}(2) \log^5 (np)} \Bigg). \label{eq:inid:eval-step2-3}
\end{align}
where $\Delta_{2,*}(1) \coloneqq \sum_{l=1}^2 \Delta_{2,*}^{(l)}(1)$ and $\Delta_{2,*}(2) \coloneqq \sum_{l=1}^5 \Delta_{2,*}^{(l)}(2)$ with
\begin{align*}
    & \Delta_{2,*}^{(2)}(1) \coloneqq \E\left[\max_{j\in[p]} \max_{i\in[n]} \frac{(\pi_{1,i}\psi_j)^4(X_i)}{\sigma_j^4}\right] \log (np), \\
    & \Delta_{2,*}^{(4)}(2) \coloneqq \E\left[  \max_{j\in[p] }\max_{(i,m)\in I_{n,2}} \frac{(\pi_{2,im}\psi_j)^4(X_i,X_m)}{\sigma_j^4} \right] \log^5 (np),\\
    & \Delta_{2,*}^{(5)}(2) \coloneqq \E\left[ \max_{j\in[p] }\max_{i\in[n]}\sum_{m\neq i}\frac{\left( P_m(\pi_{2,im}\psi_j)^2(X_i)\right)^2 }{\sigma_j^4} \right] \log^3 (np).
\end{align*}
\end{lemma}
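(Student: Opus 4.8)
The plan is to bound $\E[\|V\|_\infty]$ by treating each of the seven $U$-statistic pieces in the displayed decomposition separately and applying the i.n.i.d.~maximal inequality \cref{thm:max-is-inid} to each, after checking that the relevant kernels are degenerate and symmetric of the stated order. First I would record the structural facts that make \cref{thm:max-is-inid} applicable: $\chi^{(1,1)}_{i,jk}$ is a centered product of first-order projections, hence degenerate of order $1$; the symmetrizations $\tilde\chi^{(1,2)}_{im,jk}$ and $\tilde\chi^{(2,2),\mathtt{diag}}_{im,jk}$ of the centered kernels $\bar\chi^{(1,2)},\bar\chi^{(2,2)}$ are degenerate of order $2$; and $\tilde\chi^{(2,2)}_{iml,jk}$ is degenerate of order $3$. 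The marginalized kernels $\varphi^{(1,2)}_{m,jk}$, $\tilde\varphi^{(2,2)}_{ml,jk}$, $\varphi^{(2,2),\mathtt{diag}}_{m,jk}$, obtained by integrating out $X_i$, are likewise degenerate (of orders $1,2,1$) since every summand is already $P_i$-centered in its remaining arguments. In the i.n.i.d.~setting degeneracy holds coordinatewise from $P_{i_l}\pi_{s,\cdots}\psi=0$, so no common-measure symmetry is needed.

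Next, for each piece I would invoke \cref{thm:max-is-inid} with $q\asymp\log p$, so that the factor $(q+\log p)^{(r+s)/2}$ becomes a pure power of $\log p$ and the $L^{1\vee q/2}(\pr)$-norm reduces, up to constants, to an expectation of the relevant maximal kernel moment. For the order-$1$ term $J_1(\boldsymbol\chi^{(1,1)})$ this directly yields $\Delta_{2,*}^{(1)}(1)$ from $s=0$ and $\Delta_{2,*}^{(2)}(1)$ from $s=1$, giving \eqref{eq:inid:eval-step2-1}. For the order-$2$ and order-$3$ pieces the index $s$ attaining the maximum governs which $\Delta$-quantity appears: the partially integrated $s=1$ term is where the contraction norms enter, and I would identify $\int(\tilde\chi^{(1,2)})^2\,dP_i$ with the $\star^1_i$-contraction defining $\Delta_1^{(1)}$ and $\int(\tilde\chi^{(2,2)})^2\,dP_i$ with the contraction defining $\Delta_1^{(0)}$, while the remaining values of $s$ reproduce the $\Delta_{2,*}^{(\ell)}(\cdot)$ moment terms after Cauchy--Schwarz and H\"older.

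The delicate part, which I expect to demand the most care, is the treatment of the marginalized kernels $\varphi^{(1,2)}$, $\tilde\varphi^{(2,2)}$, $\varphi^{(2,2),\mathtt{diag}}$, each of which is itself a sum over the integrated index $i$. To control these I would first apply Cauchy--Schwarz in $X_i$ to the summand $P_i\{\pi_{1,i}\psi_j\,\pi_{2,im}\psi_k\}$, bounding it by $\|\pi_{1,i}\psi_j\|_{L^2(P_i)}\,(P_i(\pi_{2,im}\psi_k)^2(X_m))^{1/2}$, and then apply the maximal inequality for nonnegative adapted sequences (Lemma~1 in \cite{imai2025gaussian}, used with a filtration adapted to the i.n.i.d.~coordinates) to the resulting sum. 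This is precisely what produces the factor $n^{1/2}\max_{j,i}\|\pi_{1,i}\psi_j\|_{L^2(P_i)}(\Delta_{2,*}^{(5)}(2)\log^9 p)^{1/4}$ in \eqref{eq:inid:eval-step2-2}, as well as the $\Delta_{2,*}^{(5)}(2)$ contribution inside \eqref{eq:inid:eval-step2-3}.

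The main obstacle throughout is bookkeeping: one must keep the powers of $n$ and $\log p$ sharp across the contraction steps and the nonnegative-sequence bound, and must ensure that the loss of a common measure across the indices does not inflate the exponents. Here I would lean on the index-by-index degeneracy noted above and on the i.n.i.d.~fourth-moment bounds \cref{lem:max-rosenthal-inid} to control the mixed terms, so that the final bounds remain as sharp as in the i.i.d.~case and collapse exactly into the claimed combinations $\Delta_{2,*}(1)$ and $\Delta_{2,*}(2)$ together with the contraction quantities $\Delta_1^{(0)}$ and $\Delta_1^{(1)}$.
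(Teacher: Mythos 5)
Your overall strategy coincides with the paper's: verify that each of the seven kernel families ($\chi^{(1,1)}$, $\tilde\chi^{(1,2)}$, $\varphi^{(1,2)}$, $\tilde\chi^{(2,2),\mathtt{diag}}$, $\tilde\chi^{(2,2)}$, $\tilde\varphi^{(2,2)}$, $\varphi^{(2,2),\mathtt{diag}}$) is symmetric and degenerate of the stated order (your coordinatewise degeneracy checks, resting on $P_i$-centering and degeneracy of $\pi_{2,im}\psi_k$, are exactly what is needed), apply the i.n.i.d.~maximal inequality \cref{thm:max-is-inid} to each piece, and reduce the resulting kernel-moment functionals to the $\Delta$-quantities via Cauchy--Schwarz, Jensen and AM--GM. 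Your identification of the $\star^1_i$-contractions with $\Delta_1^{(0)},\Delta_1^{(1)}$, and of the pointwise Cauchy--Schwarz bound $\|\pi_{1,i}\psi_j\|_{L^2(P_i)}\bigl(P_i(\pi_{2,im}\psi_k)^2(X_m)\bigr)^{1/2}$ as the source of the term $n^{1/2}\max_{j,i}\|\pi_{1,i}\psi_j\|_{L^2(P_i)}\bigl(\Delta_{2,*}^{(5)}(2)\log^9 p\bigr)^{1/4}$, is precisely the content of the paper's auxiliary Lemmas \ref{lem:lem5-1-inid}--\ref{lem:lem5-5-inid}.

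There is, however, one step that fails as written: the parameter choice $q\asymp\log p$ in \cref{thm:max-is-inid}. You claim this makes the $L^{1\vee q/2}(\pr)$-norm ``reduce, up to constants, to an expectation,'' but the opposite holds: for $q\asymp\log p$ one has $1\vee q/2\asymp(\log p)/2$, and $\|\xi\|_{L^{q/2}(\pr)}\ge\|\xi\|_{L^{1}(\pr)}$ for nonnegative $\xi$, so the right-hand side involves norms that are in general strictly larger than, and not controlled by, the expectation-based quantities $\Delta_{2,*}^{(\ell)}$ of the lemma (e.g.\ $\Delta_{2,*}^{(3)}(2)$ and $\Delta_{2,*}^{(5)}(2)$ are defined through $\E[\max(\cdot)]$). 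The correct choice, and the one the paper uses, is simply $q=1$: the target is $\E[\max_{j,k}|J_r(\cdot)|]$, so applying \cref{thm:max-is-inid} in $L^1$ gives $L^{1\vee 1/2}=L^1$ on the right, while the factor $(1+\log p)^{(r+s)/2}\asymp(\log p)^{(r+s)/2}$ already holds because $\log p>1$; nothing is gained, and the reduction to the $\Delta$'s is destroyed, by inflating $q$. Two smaller inaccuracies: the contraction norms enter through the $s=0$ (fully integrated) branch of \cref{thm:max-is-inid}, not the $s=1$ branch as you state; and \cref{lem:max-rosenthal-inid} plays no role in this lemma (it is used in Step~3 of the main proof to bound $\Gamma_1,\Gamma_2$) --- the moment reductions here need only elementary inequalities. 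Your proposal to handle the marginalized $\varphi$-kernels with Lemma~1 of \cite{imai2025gaussian} is a harmless variant: since those families are themselves degenerate, the paper simply applies \cref{thm:max-is-inid} to them directly.
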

From these evaluations, it follows that
\begin{align*}
     \E[\|V\|_\infty] (\log p)^{3/2} 
     &\le  C \Bigg( \Delta_1' + \sqrt{(\Delta_{2,*}(1) + \Delta_{2,*}(2)) \log^5 (np)} \Bigg).
\end{align*}

\paragraph{Step 3}
In this step, we bound $\E[\Gamma_1]$ and $\E[\Gamma_2]$.
Observe that
\begin{align*}
    \Gamma_1 
    &= n \max_{j\in[p]} \E[| D_{j,1}|^4 \mid X] \\
    &= n \max_{j\in[p]} \E\left[ \left|  \sum_{i=1}^n \{\pi_{1,i} \psi_j(X_i') - \pi_{1,i} \psi_j(X_i)\}  \right|^4 \mid X\right] \\
    &= n \max_{j\in[p]} \E\left[ \left|  \sum_{i=1}^n 1_{\{\alpha = i\}}\{\pi_{1,i} \psi_j(X_i^*) - \pi_{1,i} \psi_j(X_i)\}  \right|^4 \mid X\right] \\
    &= \max_{j\in[p]} \sum_{i=1}^n \E\left[ \left| \{\pi_{1,i} \psi_j(X_i^*) - \pi_{1,i} \psi_j(X_i)\}  \right|^4 \mid X\right] \\
    & \le 8 \max_{j\in[p]} \sum_{i=1}^n \left\{ P_i(\pi_{1,i}\psi_j)^4+  (\pi_{1,i}\psi_j)^4(X_i)\right\}.
\end{align*}
By Lemma~9 in \cite{CCK15}, 
\begin{align*}
     \E\left[ \max_{j\in[p]} \sum_{i=1}^n (\pi_{1,i}\psi_j)^4(X_i) \right] &\le \max_{j\in[p]} \E\left[ \sum_{i=1}^n (\pi_{1,i}\psi_j)^4(X_i)\right] + \E\left[ \max_{i\in[n]} \max_{j\in[p]} (\pi_{1,i}\psi_j)^4(X_i)  \right] \log p \\
     & \le  \max_{j\in[p]} \sum_{i=1}^n \|\pi_{1,i}\psi_j\|_{L^4(P_i)}^4 +  \left\| \max_{i\in[n]} \max_{j\in[p]} |\pi_{1,i}\psi_j| \right\|_{L^q(\pr)}^4 \log p \\
     & \le \Delta_{2,*}^{(1)}(1) + \Delta_{2,q}^{(2)}(1) = \Delta_{2,q}(1).
\end{align*}
Therefore
\begin{align*}
    \E[\Gamma_1] \lesssim \Delta_{2,q}(1).
\end{align*}
Next, observe that
\begin{align*}
    \Gamma_2 &= n \max_{j\in[p]} \E[|D_{j,2}|^4 \mid X] \\
    & = n \max_{j\in[p]}  \E\left[\left|\sum_{1\le i< m \le n}  \{\pi_{2,im} \psi_j(X_i',X_m') - \pi_{2,im} \psi_j(X_i,X_m)\}\right|^4 \mid X\right] \\
    & = n \max_{j\in[p]} \E \Bigg[\Bigg| \sum_{i=1}^n 1_{\{\alpha=i\}} \Bigg( \sum_{m\in[n]:i<m}  \{\pi_{2,im} \psi_j(X_i^*,X_m) - \pi_{2,im} \psi_j(X_i,X_m)\} \\
    &\qquad\qquad\qquad\qquad+\sum_{m\in[n]:m<i}\{\pi_{2,mi} \psi_j(X_m,X_i^*) - \pi_{2,mi} \psi_j(X_m,X_i)\} \Bigg)\Bigg|^4 \mid X\Bigg] \\
    & = \max_{j\in[p]} \sum_{i=1}^n \E\Bigg[ \Bigg| \sum_{m\in[n]:i<m} \{\pi_{2,im} \psi_j(X_i^*,X_m) - \pi_{2,im} \psi_j(X_i,X_m)\} \\
    & \qquad\qquad\qquad\qquad+\sum_{m\in[n]:m<i}\{\pi_{2,mi} \psi_j(X_m,X_i^*) - \pi_{2,mi} \psi_j(X_m,X_i)\} \Bigg|^4 \mid X\Bigg] \\
    & \le 8 \Bigg( \max_{j\in[p]}  \sum_{i=1}^n \E\Bigg[ \Bigg| \sum_{m\in[n]:m\neq i} \pi_{2,im} \psi_j(X_i^*,X_m)\Bigg|^4 \mid X \Bigg] + \max_{j\in[p]}  \sum_{i=1}^n \Bigg|\sum_{m\in[n]:m\neq i} \pi_{2,im} \psi_j(X_i,X_m)\Bigg|^4  \Bigg) \\
    & \coloneqq 8(\Gamma_{2,1} + \Gamma_{2,2}).
\end{align*}
Noting that $\{X_i^*\}_{i=1}^n$ is an i.n.i.d.~sequence and independent of $X$, from \cref{eq:influence-mom-inid} and \cref{eq:influence-mom-inid-<}, we have
\begin{align*}
    \E[\Gamma_{2,1}] &\lesssim \E\left[\max_{j\in[p]} \sum_{i=1}^n \int \left| \sum_{m\in[n]:m\neq i} \pi_{2,im}\psi_j(x_i,X_m) \right|^4 dP_i(x_i) \right] \\
    & \lesssim \max_{j\in[p]} \sum_{i=1}^n \left\|\sum_{m\neq i}P_m(\pi_{2,im}\psi_j)^2\right\|_{L^2(P_i)}^2  \log^2 p \nonumber \\
      & \quad\quad + \max_{j\in[p]} \sum_{i=1}^n\sum_{m\neq i} \|\pi_{2,im}\psi_j\|_{L^4(P_i\otimes P_m)}^4 \log^3 p + \E\left[ \max_{j\in[p]} \max_{m\in[n]} \sum_{i\neq m} P_i(\pi_{2,im}\psi_j)^4(X_m) \right] \log^4 p.
\end{align*}
Also, from \cref{eq:influence-mom-inid-2}, we have
\begin{align*}
    & \E[\Gamma_{2,2}] \\ &\lesssim \max_{j\in[p]} \sum_{i=1}^n \left\|\sum_{m\neq i}P_m(\pi_{2,im}\psi_j)^2\right\|_{L^2(P_i)}^2  \log^2 p \nonumber \\
      & \quad + \max_{j\in[p]} \sum_{i=1}^n\sum_{m\neq i} \|\pi_{2,im}\psi_j\|_{L^4(P_i\otimes P_m)}^4 \log^3 p + \E\left[ \max_{j\in[p]} \max_{m\in[n]} \sum_{i\neq m} P_i(\pi_{2,im}\psi_j)^4(X_m) \right] \log^4 p \nonumber\\
      & \quad+ \left\|  \max_{j\in[p]} \max_{i\in[n]} \sum_{m\neq i}   P_m (\pi_{2,im}\psi_j)^2 (X_i) \right\|_{L^2(\mathbb P )}^2\log^3(np) + \E\left[ \max_{j\in[p]} \max_{(i,m)\in I_{n,2}}   (\pi_{2,im}\psi_j)^4(X_i,X_m) \right]\log^5(np).
\end{align*}
Therefore
\begin{align*}
    \E[ \Gamma_2] \lesssim \Delta_{2,q}(2).
\end{align*}
Summing up
\begin{align*}
    \E[\Gamma_1 + \Gamma_2] \lesssim \Delta_{2,q}(1) +  \Delta_{2,q}(2)
\end{align*}

\paragraph{Step 4}
In this step, we bound $\pr(\|D\|_\infty > \beta^{-1})$.
By Markov's inequality
\begin{align*}
    \Pr\left( \|D\|_\infty > \beta^{-1} \right) \le \beta^q \E[\|D\|_\infty^q] \le (2\beta)^q \left( \E\left[ \max_{j\in[p]} |D_{j,1}|^q \right] + \E\left[ \max_{j\in[p]} |D_{j,2}|^q \right] \right).
\end{align*}
By definition, it holds that
\begin{align*}
    \E\left[ \max_{j\in[p]} |D_{j,1}|^q \right] &= \E\left[ \max_{j\in[p]} \left| \sum_{i=1}^n \{\pi_{1,i}\psi_j(X_i') - \pi_{1,i}\psi_j(X_i)\} \right|^q \right] \\
    &=\E\left[ \max_{j\in[p]} \left| \sum_{i=1}^n 1_{\{\alpha=i\}}\{\pi_{1,i}\psi_j(X_i^*) - \pi_{1,i}\psi_j(X_i)\} \right|^q \right] \\
    &= \frac{1}{n} \sum_{i=1}^n  \E\left[ \max_{j\in[p]} \left| \{\pi_{1,i}\psi_j(X_i^*) - \pi_{1,i}\psi_j(X_i)\} \right|^q \right] \\
    & \le \frac{2^q}{n} \sum_{i=1}^n \left\| \max_{j\in[p]} |\pi_{1,i}\psi_j|\right\|_{L^q(P_i)}^q \\
    & \le 2^q\left\|  \max_{i\in[n]} \max_{j\in[p]} |\pi_{1,i}\psi_j|\right\|_{L^q(\pr)}^q \le 2^q \left( \frac{\Delta_{2,q}^{(2)}(1)^{1/4}}{ n^{1/q} \{\log (np)\}^{1/4}}\right)^q.
 \end{align*} 

Also,
\begin{align*}
     \E\left[ \max_{j\in[p]} |D_{j,2}|^q \right] 
     &= \frac{1}{n} \sum_{i=1}^n \E\left[ \max_{j\in[p]} \left| \sum_{m:m\neq i} \{\pi_{2,im}\psi_{j}(X_i,X_m) - \pi_{2,im}\psi_{j}(X_i,X_m^*)\}\right|^q \right] \\
     & \le \max_{i\in[n]} \E\left[ \max_{j\in[p]} \left| \sum_{m:m\neq i} \{\pi_{2,im}\psi_{j}(X_i,X_m) - \pi_{2,im}\psi_{j}(X_i,X_m^*)\}\right|^q \right] \\
     & \le 2^q \max_{i\in[n]} \E\left[ \max_{j\in[p]} \left| \sum_{m:m\neq i} \pi_{2,im}\psi_{j}(X_i,X_m)\right|^q \right]. 
\end{align*}
Then, Lemma~1 in \cite{imai2025gaussian} yields
\begin{align*}
    &\max_{i\in[n]} \E\left[ \max_{j\in[p]} \left| \sum_{m:m\neq i} \pi_{2,im}\psi_{j}(X_i,X_m)\right|^q \right] \\
    & \quad \le C \max_{i\in[n]} \Bigg( \left\| \max_{j\in[p]} \sqrt{\sum_{m\neq i} P_m(|\pi_{2,im}\psi_j|^2)(X_i)} \right\|_{L^q(\pr)}^q \sqrt{\log^q (np)} \\
    & \qquad\qquad\qquad + \left\| \max_{m\neq i} \max_{j\in[p]} |\pi_{2,im}\psi_j(X_i,X_m)| \right\|_{L^q(\pr)}^q \log^q (np) \Bigg) \\
    & \quad \le  C \Bigg(  \left( n \left\| \max_{j\in[p]} \max_{i\in[n]} \sum_{m\neq i} P_m(|\pi_{2,im}\psi_j|^2)(X_i) \right\|_{L^{q/2}(\pr)} \log (np) \right)^{q/2} \\
    & \qquad\qquad\qquad + \left( \left\| \max_{j\in[p]} \max_{(i,m)\in I_{n,2}}|\pi_{2,im}\psi_j(X_i,X_m)| \right\|_{L^q(\pr)} \log (np) \right)^q \Bigg) \\
    & \quad \le C \left(\frac{ \{\Delta_{2,q}^{(5)}(2) + \Delta_{2,q}^{(4)}(2)\}^{1/4}}{n^{1/q} \log^{1/4}(np)}\right)^q .
\end{align*}
Consequently, in conjunction with $\beta = \varepsilon^{-1} \log (np)$, there exists a universal constant $C>0$ such that
\begin{align*}
    n\pr\left( \|D\|_\infty > \beta^{-1} \right) \le \varepsilon^{-q} \{\log (np)\}^q \left( C \frac{\{\Delta_{2,q}(1) + \Delta_{2,q}(2)\}^{1/4}}{\log^{1/4}(np)}  \right)^q.
\end{align*}

\paragraph{Step 5}
In this step, we choose the value of $\eps$ appropriately and complete the proof.
Let 
\begin{align*}
    \varepsilon = \left(\Delta_1'  + \sqrt{(\Delta_{2,*}(1) + \Delta_{2,*}(2)) \log^5 (np)}\right)^{1/2} + C\left( (\Delta_{2,q}(1) + \Delta_{2,q}(2)) \log^3 (np)\right)^{1/4}.
\end{align*}
Then, from the same argument as Step~3 in the proof of Theorem~2 in \cite{imai2025gaussian}, we have the desired result.

\subsection{Proof of \cref{coro:main-kernel-inid}} \label{subsec:proof:coro:main-kernel-inid}

Before the Proof of \cref{coro:main-kernel-inid}, we provide the extensions of Lemma 4 and Lemma 6 in \cite{imai2025gaussian} to i.n.i.d.~setting. 

The first result is the extensions of Lemma 4  in \cite{imai2025gaussian} to i.n.i.d.~setting. 
\begin{lemma} \label{lem:max-Jensen_inid}
Let  $\boldsymbol{\psi}_j \coloneqq  \{\psi_{j,(i_1,\dots, i_r)}\}_{{(i_1,\dots, i_r)\in I_{n,r}}}$. 
Assume $\psi_{j,(i_1,\dots, i_r)}\in L^{1}(\otimes_{s=1}^r P_{i_s})$ be symmetric kernels of order $r\ge 1$ for all $(i_1, \dots, i_r)\in I_{n,r}$. 
\begin{align}
    & \E\left[ \max_{j\in[p]} \max_{(i_1,\dots, i_{r-l})\in I_{n,r-l}} \max_{(k_1,\dots, k_l)\in K_{n,l}(i_1,\dots, i_{r-l})}  \prod_{t=1}^l P_{k_t} \psi_{j,(i_1,\dots, i_{r-l},k_1,\dots, k_{l})}(X_{i_1}, \dots X_{i_{r-l}})  \right] \nonumber\\
    &\qquad\qquad\qquad\qquad\qquad\qquad\qquad\qquad\qquad \le \frac{r!}{(r-l)!} \E\left[  \max_{j\in[p]} \max_{(i_1,\dots, i_r)\in I_{n,r}} \psi_{j}(X_{i_1}, \dots X_{i_{r}})  \right]. \label{eq:max-Jensen-inid}
\end{align}

\begin{proof}[Proof of \cref{lem:max-Jensen_inid}]
In line with the proof of Lemma 4 in \cite{imai2025gaussian},
the claim for general $l$ follows from repeated applications of the claim for $l=1$.
Hence, it suffices to consider the case of $l=1$.
Also as in that proof, we can assume $p=1$ and $\psi_1\ge 0$ without loss of generality.

Under this assumption, we prove the lemma by induction on $r$. 
When $r=1$, 
\begin{align*}
    \E\left[ \max_{k\in[n]}  P_{k}\psi_{(k)} \right] =  \max_{k\in[n]} P_{k}\psi_{(k)}  \le \E\left[ \max_{k\in[n]} \psi_{(k)} (X_{k}) \right],
\end{align*}
so \cref{eq:max-Jensen-inid} holds.

Next, suppose that $r > 1$ and \cref{eq:max-Jensen-inid} holds for any symmetric kernel $\psi_{1, (i_1,\dots, i_s)}$ of order $s\le r-1$.
Classifying whether $k$ is $n$ or not, 
\begin{align*}
    & \E\left[ \max_{(i_1,\dots, i_{r-1})\in I_{n,r-1}} \max_{k\in K_{n,1}(i_1,\dots, i_{r-1})}   P_{k}\psi_{1,(i_1,\dots, i_{r-1},k)}(X_{i_1}, \dots X_{i_{r-1}})\right]  \\
    & \le \E\left[ \max_{(i_1,\dots, i_{r-1})\in I_{n-1,r-1}}  P_{n}\psi_{1,(i_1,\dots, i_{r-1},n)}(X_{i_1}, \dots X_{i_{r-1}})  \right] \\
    & \quad + \E\left[ \max_{(i_1,\dots, i_{r-2})\in I_{n-1,r-2}}\max_{k\in K_{n-1,1}(i_1,\dots, i_{r-2})}  P_{k}\psi_{1,(i_1,\dots, i_{r-2}, k, n)}(X_{i_1}, \dots X_{i_{r-2}}, X_n)  \right] \\
    & \eqqcolon I + II.
\end{align*}
Since $X_n$ is independent of $\mathcal{ G} \coloneqq \sigma(X_1,\dots, X_{n-1})$, we have
\begin{align*}
    I & = \E\left[  \max_{(i_1,\dots, i_{r-1})\in I_{n-1,r-1}} \E\left[ \psi_{1,(i_1,\dots, i_{r-1},n)}(X_{i_1}, \dots X_{i_{r-1}}, X_n) \mid \mathcal{G}\right]\right] \\
    & \le \E\left[  \max_{(i_1,\dots, i_{r-1})\in I_{n-1,r-1}}  \psi_{1,(i_1,\dots, i_{r-1},n)}(X_{i_1}, \dots X_{i_{r-1}}, X_n) \right] \\
    & \le \E\left[  \max_{(i_1,\dots, i_{r})\in I_{n,r}}  \psi_{1,(i_1,\dots, i_{r})}(X_{i_1}, \dots X_{i_{r}}) \right],
\end{align*}
where the first inequality follows from Jensen's inequality.
\begin{align*}
    II = \int \E\left[  \max_{(i_1,\dots, i_{r-2})\in I_{n-1,r-2}}\max_{k\in K_{n-1,1}(i_1,\dots, i_{r-2})}  P_{k}\psi_{1,(i_1,\dots, i_{r-2}, k, n)}(X_{i_1}, \dots X_{i_{r-2}}, x_n)  \right] dP_n(x_n)
\end{align*}
In terms of $II$, applying the assumptions of the induction gives
\begin{align*}
    II &\le (r-1)\int \E\left[  \max_{(i_1,\dots, i_{r-1})\in I_{n-1,r-1}}  \psi_{1,(i_1,\dots, i_{r-1},n)}(X_{i_1}, \dots X_{i_{r-1}}, x_n) \right]   dP_n(x_n) \\
    & = (r-1)\E\left[  \max_{(i_1,\dots, i_{r-1})\in I_{n-1,r-1}}  \psi_{1,(i_1,\dots, i_{r-1},n)}(X_{i_1}, \dots X_{i_{r-1}}, X_n) \right] \\
    & \le (r-1)\E\left[  \max_{(i_1,\dots, i_{r})\in I_{n,r}}  \psi_{1,(i_1,\dots, i_{r})}(X_{i_1}, \dots X_{i_{r}}) \right].
\end{align*}

Summing up, we have
\begin{align*}
    I + II \le r\E\left[  \max_{(i_1,\dots, i_{r})\in I_{n,r}}  \psi_{1,(i_1,\dots, i_{r})}(X_{i_1}, \dots X_{i_{r}}) \right].
\end{align*}
\end{proof}
\end{lemma}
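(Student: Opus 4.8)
The plan is to reduce the statement to the case $l=1$ with a single nonnegative kernel ($p=1$, $\psi\ge0$), and then to prove that core case by induction on the order $r$, in which the independence of the last observation $X_n$ from the remaining ones plays the role that exchangeability plays in the i.i.d.\ argument for Lemma~4 in \cite{imai2025gaussian}.

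For the reduction in $l$, I would argue by induction on $l$: writing the $l$-fold integrated summand as a single integration $P_{k_1}$ applied to the symmetric order-$(r-l+1)$ function obtained by integrating out $k_2,\dots,k_l$ and maximizing over those indices, the $l=1$ bound at order $r-l+1$ extracts a factor $r-l+1$, while the inductive hypothesis for $l-1$ supplies $r!/(r-l+1)!$; the two multiply to the claimed $r!/(r-l)!$. To pass to $p=1$, I would replace the family by the pointwise maximal kernel $\Psi_{(i_1,\dots,i_r)}:=\max_{j\in[p]}\psi_{j,(i_1,\dots,i_r)}$, which is again symmetric; positivity of each operator $\prod_t P_{k_t}$ gives $\max_j\prod_t P_{k_t}\psi_j\le\prod_t P_{k_t}\Psi$ pointwise, so the left side is dominated by the single-kernel quantity for $\Psi$ while the right side is unchanged. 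Nonnegativity may likewise be assumed, since it is exactly what the maximal/positivity structure exploits.

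For the core case $l=1$, $p=1$, $\psi\ge0$, I would induct on $r$. The base case $r=1$ is immediate: $\max_k P_k\psi_{(k)}$ is deterministic and bounded by $\E[\max_k\psi_{(k)}(X_k)]$, since $\E[\max_k\psi_{(k)}(X_k)]\ge\max_k\E[\psi_{(k)}(X_k)]$. For the inductive step I would single out the index $n$ and split the outer maximum sub-additively (using $\psi\ge0$) according to whether the integrated index equals $n$ or not, writing the bound as $I+II$. In $I$ the integrated index is $n$, so $X_n$ is averaged out; because $X_n\perp\!\!\!\perp\mathcal G:=\sigma(X_1,\dots,X_{n-1})$, this term equals $\E[\max_{(i_1,\dots,i_{r-1})\in I_{n-1,r-1}}\E[\psi_{(i_1,\dots,i_{r-1},n)}\mid\mathcal G]]$, which is at most $\E[\max_{I_{n,r}}\psi]$ by Jensen. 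In $II$ the index $n$ survives as a free coordinate; freezing $X_n=x_n$ turns the summand into a symmetric order-$(r-1)$ kernel in the remaining variables, so the induction hypothesis applies with factor $r-1$, and integrating back over $x_n$ (again using independence of $X_n$) bounds $II$ by $(r-1)\E[\max_{I_{n,r}}\psi]$. Adding the two pieces yields the factor $1+(r-1)=r$, closing the induction.

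The main obstacle I anticipate is carrying out the inductive split correctly without exchangeability: in the i.n.i.d.\ regime I cannot relabel observations, so the whole argument must route through the independence of $X_n$ alone, and I must verify that fixing the $n$-slot genuinely leaves a \emph{symmetric} order-$(r-1)$ kernel and that the nested index sets $I_{n,r-1}$, $I_{n-1,r-1}$, $I_{n-1,r-2}$ and $K_{n-1,1}$ are bookkept so that every configuration in the original maximum is covered across $I$ and $II$ without loss. Once this combinatorial accounting is in place, the emergence of $r!/(r-l)!$ is routine.
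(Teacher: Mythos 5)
Your proposal reconstructs the paper's own proof essentially step for step: the same reduction to $l=1$ by iterating the one-integration bound (with the factors $(r-l+1)\cdot r!/(r-l+1)!=r!/(r-l)!$ combining exactly as you describe), the same reduction to $p=1$ and $\psi\ge 0$, and the same induction on $r$ whose step splits the maximum into a term $I$ (integrated index equal to $n$, handled by conditional Jensen given $\sigma(X_1,\dots,X_{n-1})$) and a term $II$ ($n$ appearing as a free coordinate, handled by freezing $X_n=x_n$ and applying the order-$(r-1)$ hypothesis to the sample $X_1,\dots,X_{n-1}$), giving the count $1+(r-1)=r$.

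However, the point you defer as ``combinatorial accounting'' is a genuine gap, not a routine verification --- and it is present in the paper's written proof as well. The dichotomy ``$k=n$, or $n$ is a free index'' is not exhaustive: configurations with $k\neq n$ \emph{and} $n\notin\{i_1,\dots,i_{r-1}\}$, i.e.\ all $r$ indices lying in $[n-1]$, belong to neither $I$ nor $II$. Concretely, for $r=2$ and $n=3$, take $\psi_{(1,2)}\equiv c>0$ and $\psi_{(1,3)}=\psi_{(2,3)}\equiv 0$: the left-hand expectation is at least $\E[P_2\psi_{(1,2)}(X_1)]=c$, while $I=II=0$, so the displayed decomposition fails (the lemma's conclusion still holds here, but the proof step does not). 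Nor is the repair routine. The missing configurations constitute exactly the same maximal quantity at sample size $n-1$, so covering them by an additional induction on $n$ produces the recursion $\E[M_{n,r}]\le r\,\E[Z_n]+\E[M_{n-1,r}]$, with $Z_n:=\max_{(i_1,\dots,i_r)\in I_{n,r}}\psi_{(i_1,\dots,i_r)}(X_{i_1},\dots,X_{i_r})$, whose iteration gives a bound of order $nr\,\E[Z_n]$ rather than $r\,\E[Z_n]$; and the i.i.d.\ device of representing the integrated coordinate by a sample point with a \emph{bounded} index is unavailable because in the i.n.i.d.\ setting the maximum also runs over the integrated index $k$. A correct argument seems to require a genuinely new ingredient, for instance decoupling: write $\prod_{t=1}^{l}P_{k_t}\psi_{j}(X_{i_1},\dots,X_{i_{r-l}})=\E\bigl[\psi_j(X_{i_1},\dots,X_{i_{r-l}},X'_{k_1},\dots,X'_{k_l})\mid X\bigr]$ for an independent copy $X'$ of $X$, apply Jensen to pull the maximum inside, and then compare the decoupled maximum with the coupled one via an independent uniform random bipartition of $[n]$ (each configuration is split correctly with probability $2^{-r}$, and a maximum of nonnegative terms over a random index subset independent of the data loses at most the factor $2^{r}$). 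This yields the inequality with $2^{r}$ in place of $r!/(r-l)!$, which suffices for every application of the lemma in the paper, but it is a different proof from the one you (and the paper) propose.
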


The second result is extension of Lemma 6 in \cite{imai2025gaussian}.
\begin{lemma} \label{lem:drop-pi-inid}
There exists a universal constant such that
\begin{align*}
    & \left\| \sum_{i\neq m,l} \pi_{2,im}\psi_j \star_i^1 \pi_{2,il}\psi_k\right\|_{L^2(P_m\otimes P_l)}^2 \\
    & \le C \left( \left\| \sum_{i\neq m,l}  \psi_{j,(i,m)}\star_i^1 \psi_{k,(i,l)}\right \|_{L^2(P_m\otimes P_l)}^2 +  \left(  \sum_{i\neq m}  \|P_i\psi_{j,(i,m)}\|_{L^2(P_m)}^2\right)\left(  \sum_{i\neq l}  \|P_i\psi_{k,(i,l)}\|_{L^2(P_l)}^2\right) \right).
\end{align*}
\begin{proof}
From the definition of $\pi_{2,im}\psi_j \star^1_i \pi_{2,il}\psi_k$, we have
\begin{align*}
    & \left\|\sum_{i\neq m,l} \pi_{2,im}\psi_j \star^1_i \pi_{2,il}\psi_k\right\|_{L^2(P_m\otimes P_l)}^2 \\
    & =  \left\|\sum_{i\neq m,l} \int \pi_{2,im}\psi_j(x_i, X_m)\pi_{2,il}\psi_k(x_i,X_l) dP_i(x_i) \right\|_{L^2(P_m\otimes P_l)}^2  .
\end{align*}
Recall that the second-order Hoeffding projection is defined in \cref{eq:Hoeffding-projections-2nd-inid} as
\begin{align*}
    & \pi_{2,im}\psi_j(X_i,X_m) =  (\delta_{X_i} - P_i)(\delta_{X_m} - P_m) \psi_j(X_i,X_m), \\
    & \pi_{2,il}\psi_k(X_i,X_l) =  (\delta_{X_i} - P_i)(\delta_{X_l} - P_l) \psi_k(X_i,X_m).
\end{align*}
Therefore
\begin{align*}
    & \left\|\sum_{i\neq m,l} \pi_{2,im}\psi_j \star^1_i \pi_{2,il}\psi_k\right\|_{L^2(P_m\otimes P_l)}^2 \\
    & =  \left\|(\delta_{X_m} - P_m)(\delta_{X_l} - P_l)  \sum_{i\neq m,l} \int \{\psi_j(x_i,x_m) - P_i\psi_j(x_i,X_m)\} \{\psi_k(x_i,X_l) - P_i\psi_k(x_i,X_l)\} dP_i(x_i)\right\|_{L^2(P_m\otimes P_l)}^2 \\
    & = \left\|(\delta_{X_m} - P_m)(\delta_{X_l} - P_l)  \sum_{i\neq m,l} \{\psi_{j,(i,m)}\star_i^1\psi_{k,(i,l)}(X_m, X_l) - P_i\psi_{j,(i,m)}(X_m)P_i\psi_{k,(i,l)}(X_l)\}\right\|_{L^2(P_m\otimes P_l)}^2.
\end{align*}
Since $(\delta_{X_m} - P_m)(\delta_{X_l} - P_l)$ is a contraction on $L^2(P_m\otimes P_l)$, 
\begin{align*}
    & \left\|\sum_{i\neq m,l} \pi_{2,im}\psi_j \star^1_i \pi_{2,il}\psi_k\right\|_{L^2(P_m\otimes P_l)}^2  \\
    & \le  \left\| \sum_{i\neq m,l} \{\psi_{j,(i,m)}\star_i^1\psi_{k,(i,l)}(X_m, X_l) - P_i\psi_{j,(i,m)}(X_m)P_i\psi_{k,(i,l)}(X_l)\}\right\|_{L^2(P_m\otimes P_l)}^2\\
    & \le 2\left\| \sum_{i\neq m,l}  \psi_{j,(i,m)}\star_i^1 \psi_{k,(i,l)}\right \|_{L^2(P_m\otimes P_l)}^2 + 2\left\|\sum_{i\neq m,l} P_i\psi_{j,(i,m)}(X_m) P_i\psi_{k,(i,l)}(X_l)\right\|_{L^2(P_m\otimes P_l)}^2 .
\end{align*}
Then, applying Schwarz inequality to the second term gives
\begin{align*}
    & \left\|\sum_{i\neq m,l} \pi_{2,im}\psi_j \star^1_i \pi_{2,il}\psi_k\right\|_{L^2(P_m\otimes P_l)}^2 \\
    & \le 2\left\| \sum_{i\neq m,l}  \psi_{j,(i,m)}\star_i^1 \psi_{k,(i,l)}\right \|_{L^2(P_m\otimes P_l)}^2 + 2 \left(  \sum_{i\neq m}  \|P_i\psi_{j,(i,m)}\|_{L^2(P_m)}^2\right)\left(  \sum_{i\neq l}  \|P_i\psi_{k,(i,l)}\|_{L^2(P_l)}^2\right).
\end{align*}
\end{proof}
\end{lemma}

\begin{proof}[Proof of \cref{coro:main-kernel-inid}]

    From \cref{eq:Hoeffding-projections-2nd-inid}, \cref{lem:max-Jensen_inid} and Jensen's inequality, we have $\Delta_{2,q}(1) \lesssim \tilde \Delta_{2,q}(1)$, $\Delta_{2,q}(2) \lesssim \tilde \Delta_{2,q}(2)$ and $\Delta_{2,q}^{(5)}(2) \lesssim \tilde \Delta_{2,q}^{(5)}(2)$.
    
    Next, from \cref{lem:drop-pi-inid}, we have
    \begin{align*}
        \Delta_1^{(0)}\log^3 (np) & = \max_{(j,k)\in[p]^2} \sum_{m=1}^n\sum_{l\neq m} \left\| \sum_{i\neq m,l} \pi_{2,im}\psi_j \star_i^1 \pi_{2,il}\psi_k\right\|_{L^2(P_m\otimes P_l)}^2 \log^3(np)\\
        & \lesssim  \max_{(j,k)\in[p]^2} \sum_{m=1}^n\sum_{l\neq m} \left\| \sum_{i\neq m,l}  \psi_{j,(i,m)}\star_i^1 \psi_{k,(i,l)}\right \|_{L^2(P_m\otimes P_l)}^2 \log^3 (np)\\
        & \quad + \max_{(j,k)\in[p]^2} \sum_{m=1}^n\sum_{l\neq m} \left(  \sum_{i\neq m}  \|P_i\psi_{j,(i,m)}\|_{L^2(P_m)}^2\right)\left(  \sum_{i\neq l}  \|P_i\psi_{k,(i,l)}\|_{L^2(P_l)}^2\right)  \log^3 (np).
    \end{align*}
    In terms of the second term, AM-GM inequality gives
    \begin{align*}
        & \max_{(j,k)\in[p]^2} \sum_{m=1}^n\sum_{l\neq m} \left(  \sum_{i\neq m}  \|P_i\psi_{j,(i,m)}\|_{L^2(P_m)}^2\right)\left(  \sum_{i\neq l}  \|P_i\psi_{k,(i,l)}\|_{L^2(P_l)}^2\right)  \log^3 (np) \\
        & \le \max_{(j,k)\in[p]^2}  \left(  \sum_{m=1}^n\sum_{i\neq m}  \|P_i\psi_{j,(i,m)}\|_{L^2(P_m)}^2\right)\left(  \sum_{l=1}^n \sum_{i\neq l}  \|P_i\psi_{k,(i,l)}\|_{L^2(P_l)}^2\right)  \log^3 (np) \\
        & \le \max_{(j,k)\in[p]^2} \left[ \frac{1}{2}\left(  \sum_{m=1}^n\sum_{i\neq m}  \|P_i\psi_{j,(i,m)}\|_{L^2(P_m)}^2\right)^2 + \frac{1}{2}\left(  \sum_{l=1}^n \sum_{i\neq l}  \|P_i\psi_{k,(i,l)}\|_{L^2(P_l)}^2\right)^2\right]  \log^3 (np) \\
        & \le \max_{j\in[p]} \left(  \sum_{i=1}^n\sum_{m\neq i}  \|P_m\psi_{j,(i,m)}\|_{L^2(P_i)}^2\right)^2 \log^3 (np).
    \end{align*}
    Thus
    \begin{align*}
         \Delta_1^{(0)}\log^3 (np)
         & \le \max_{(j,k)\in[p]^2} \sum_{m=1}^n\sum_{l\neq m} \left\| \sum_{i\neq m,l}  \psi_{j,(i,m)}\star_i^1 \psi_{k,(i,l)}\right \|_{L^2(P_m\otimes P_l)}^2 \log^3 (np) \\
         & \quad\quad + \max_{j\in[p]} \left(  \sum_{i=1}^n\sum_{m\neq i}  \|P_m\psi_{j,(i,m)}\|_{L^2(P_i)}^2\right)^2 \log^3 (np).
    \end{align*}
    
    Finally, we evaluate $ \Delta_1^{(1)}$.
    From $\int \pi_{1,i}\psi_{j}(x_i) dP_i(x_i) = 0$ and Fubini's theorem, it holds that
    \begin{align*}
        & \pi_{1,i}\psi_j \star^1_i \pi_{2,im}\psi_k(X_m) \\
        & = \int  \pi_{1,i}\psi_{j}(x_i)  (\delta_{x_i} - P_i)(\delta_{X_m} - P_m) \psi_k(x_i,X_m) dP_i(x_i) \\
        & = (\delta_{X_m} - P_m) \int  \pi_{1,i}\psi_{j}(x_i)  \psi_k(x_i,X_m) dP_i(x_i).
    \end{align*}
    Since $(\delta_{X_m} - P_m)$ is a contraction on $L^2(P_m)$, 
    \begin{align*}
         \left\|\sum_{i\neq m}\pi_{1,i}\psi_j \star_i^1 \pi_{2,im}\psi_k \right\|_{L^2(P_m)}^2
        &\le   \left\|\sum_{i\neq m} \int  \pi_{1,i}\psi_{j}(x_i) \psi_k(x_i,X_m) dP_i(x_i)\right\|_{L^2(P_m)}^2.
    \end{align*}
    Then, Schwarz inequality gives
    \begin{align*}
         \left\|\sum_{i\neq m}\pi_{1,i}\psi_j \star_i^1 \pi_{2,im}\psi_k \right\|_{L^2(P_m)}^2
        &\le \left( \sum_{i\neq m} \|\pi_{1,i}\psi_{j} \|_{L^2(P_i)}^2 \right) \left\|\sum_{i\neq m} P_i \psi_{k,(i,m)}^2(X_m) \right\|_{L^2(P_m)} \\
        & \le \left(  \sum_{i\neq m} \left\| \sum_{l\neq i}  P_l\psi_{j,(i,l)}(X_i) \right\|_{L^2(P_i)}^2 \right) \left\|\sum_{i\neq m} P_i \psi_{k,(i,m)}^2(X_m) \right\|_{L^2(P_m)} .
    \end{align*}
    Therefore
    \begin{align*}
        \Delta_1^{(1)} &= \max_{(j,k)\in[p]^2} \sum_{m=1}^n \left\|\sum_{i\neq m}\pi_{1,i}\psi_j \star_i^1 \pi_{2,im}\psi_k \right\|_{L^2(P_m)}^2  \\
        & \le  \max_{(j,k)\in[p]^2} \sum_{m=1}^n \left(  \sum_{i\neq m} \left\| \sum_{l\neq i}  P_l\psi_{j,(i,l)}(X_i) \right\|_{L^2(P_i)}^2 \right) \left\|\sum_{i\neq m} P_i \psi_{k,(i,m)}^2(X_m) \right\|_{L^2(P_m)}  \\
        & \le \left( \max_{j\in[p]} \sum_{i=1}^n \left\| \sum_{m\neq i}  P_m\psi_{j,(i,m)}(X_i) \right\|_{L^2(P_i)}^2 \right) \max_{k\in[p]} \sum_{i=1}^n \left\|\sum_{m\neq i} P_m \psi_{k,(i,m)}^2(X_i) \right\|_{L^2(P_i)} .
    \end{align*}
    Now, the desired result follows by inserting obtained bounds into \cref{eq:main-inid}.
\end{proof}

\subsection{Proof of \cref{coro:main-kernel-V-inid}} \label{subsec:proof:coro:main-kernel-V-inid}
\begin{proof}[Proof of \cref{coro:main-kernel-V-inid}]
As shown in the proof of \cref{coro:main-kernel-inid}, it holds that
\begin{align*}
    \Delta_{2,q}(2) \lesssim \tilde \Delta_{2,q}(2), \quad \Delta_{2,q}^{(5)}(2) \lesssim \tilde \Delta_{2,q}^{(5)}(2)
\end{align*}
and
\begin{align*}
        \Delta_1^{(0)}\log^3 p & \le \max_{(j,k)\in[p]^2} \sum_{m=1}^n\sum_{l\neq m} \left\| \sum_{i\neq m,l}  \psi_{j,(i,m)}\star_i^1 \psi_{k,(i,l)}\right \|_{L^2(P_m\otimes P_l)}^2 \log^3 (np) \\
         & \quad\quad + \max_{j\in[p]} \left(  \sum_{i=1}^n\sum_{m\neq i}  \|P_m\psi_{j,(i,m)}\|_{L^2(P_i)}^2\right)^2 \log^3 (np).
    \end{align*}
Also, from \cref{eq:Hoeffding-projections-2nd-V-inid}, \cref{lem:max-Jensen_inid} and Jensen's inequality, we have $\Delta^V_{2,q}(1) \lesssim \tilde \Delta^V_{2,q}(1)$.
Finally, we evaluate $(\Delta_1^{(1)})^V$.
In the same way as the evaluation $\Delta_1^{(1)}$, we have
    \begin{align*}
        (\Delta_1^{(1)})^V \le \max_{(j,k)\in[p]^2} \sum_{m=1}^n  \left( \sum_{i\neq m} \|\pi_{1,i}^V\psi_{j} \|_{L^2(P_i)}^2 \right) \left\|\sum_{i\neq m} P_i \psi_{k,(i,m)}^2(X_m) \right\|_{L^2(P_m)}.
    \end{align*}
    Observe that 
    \begin{align*}
         & \|\pi_{1,i}^V\psi_{j}\|_{L^2(P_i)}^2\\
         & \le  \left\| \frac{1}{2}(\delta_{X_i} - P_i)\psi(X_i,X_i) + \sum_{l\neq i} (\delta_{X_i} - P_i)P_l \psi(X_i,X_l)\right\|_{L^2(P_i)}^2 \\
         & \lesssim  \|\psi_j(X_i,X_i)\|_{L^2(P_i)}^2  +  \left\| \sum_{l\neq i}  P_l\psi_{j,(i,l)}(X_i) \right\|_{L^2(P_i)}^2.
    \end{align*}
    Thus
    \begin{align*}
         \Delta_1^{(1)} 
         &\lesssim \max_{(j,k)\in[p]^2} \sum_{m=1}^n  \left( \sum_{i\neq m} \|\psi_j(X_i,X_i)\|_{L^2(P_i)}^2 \right) \left\|\sum_{i\neq m} P_i \psi_{k,(i,m)}^2(X_m) \right\|_{L^2(P_m)} \\
         & \quad + \max_{(j,k)\in[p]^2} \sum_{m=1}^n  \left(  \sum_{i\neq m} \left\| \sum_{l\neq i}  P_l\psi_{j,(i,l)}(X_i) \right\|_{L^2(P_i)}^2  \right) \left\|\sum_{i\neq m} P_i \psi_{k,(i,m)}^2(X_m) \right\|_{L^2(P_m)} \\
         & \le  \left(\max_{j\in[p]}   \sum_{i=1}^n \|\psi_j(X_i,X_i)\|_{L^2(P_i)}^2 \right) \left( \max_{k\in[p]} \sum_{i=1}^n\left\|\sum_{m\neq i} P_m \psi_{k,(i,m)}^2(X_m) \right\|_{L^2(P_i)} \right) \\
         & \quad +  \left( \max_{j\in[p]} \sum_{i=1}^n \left\| \sum_{m\neq i}  P_m\psi_{j,(i,m)}(X_i) \right\|_{L^2(P_i)}^2 \right) \max_{k\in[p]} \sum_{i=1}^n \left\|\sum_{m\neq i} P_m \psi_{k,(i,m)}^2(X_i) \right\|_{L^2(P_i)}.
    \end{align*}
    Now, the desired result follows by inserting obtained bounds into \cref{eq:main-V-inid}.
\end{proof}

\section{Proof for \cref{subsec:motivation-inid}} \label{sec:proof_application}

\subsection{Proof of \cref{lem:gluing_separable_array}} \label{subsec:proof:lem:gluing_separable_array}
\begin{proof}[Proof of \cref{lem:gluing_separable_array}]
Fix $A\in\mathcal{R}_p$. 
Since $II$ is $\mathcal{F}$-measurable and $\mathcal{R}_p$ is closed under coordinatewise shifts,
$A-II\in\mathcal{R}_p$ almost surely, and
\[
\mathbb{P}(I+II\in A \mid \mathcal{F})=\mathbb{P}(I\in A-II\mid \mathcal{F}),
~~
\mathbb{P}(Z_I(\mathcal{F})+II\in A \mid \mathcal{F})=\mathbb{P}(Z_I(\mathcal{F})\in A-II\mid \mathcal{F}).
\]
Therefore,
\begin{align*}
& \Bigl| \mathbb{P}(I+II\in A)-\mathbb{P}\bigl(Z_I(\mathcal{F})+II\in A\bigr)\Bigr| \\
&=\Bigl| \mathbb{E}\bigl[\mathbb{P}(I\in A-II\mid\mathcal{F})-\mathbb{P}(Z_I(\mathcal{F})\in A-II\mid\mathcal{F})\bigr]\Bigr| \\
&\le \mathbb{E}\Bigl[\sup_{B\in\mathcal{R}_p}\Bigl|\mathbb{P}(I\in B\mid\mathcal{F})-\mathbb{P}(Z_I(\mathcal{F})\in B\mid\mathcal{F})\Bigr|\Bigr] \le \mathbb{E}[\delta_{1,n}(\mathcal{F})].
\end{align*}
Then, similarly using shift-invariance of rectangles, we can see that
\begin{align*}
& \Bigl| \mathbb{P}\bigl(Z_I(\mathcal{F})+II\in A\bigr)-\mathbb{P}(Z_I+II\in A)\Bigr| \\
&=\Bigl| \mathbb{E}\bigl[\mathbb{P}(Z_I(\mathcal{F})\in A-II \mid \mathcal{F})-\mathbb{P}(Z_I\in A-II)\bigr]\Bigr| \\
& \le \mathbb{E}\Bigl[ \sup_{B\in\mathcal{R}_p}\Bigl|\mathbb{P}(Z_I(\mathcal{F})\in B \mid\mathcal{F})-\mathbb{P}(Z_I\in B)\Bigr|\Bigr] \le \mathbb{E}[\delta_{2,n}(\mathcal{F})].
\end{align*}
Finally, since $Z_I$ is independent of $(II,Z_{II})$, it holds that
\[
\mathbb{P}(Z_I+II\in A)=\mathbb{E}\bigl[\mathbb{P}(II\in A-Z_I)\bigr],
\qquad
\mathbb{P}(Z_I+Z_{II}\in A)=\mathbb{E}\bigl[\mathbb{P}(Z_{II}\in A-Z_I)\bigr].
\]
Since $A-z\in\mathcal{R}_p$ for every fixed $z\in\mathbb{R}^p$, it follows that
\begin{align*}
& \Bigl| \mathbb{P}(Z_I+II\in A)-\mathbb{P}(Z_I+Z_{II}\in A)\Bigr| \\
&=\Bigl| \mathbb{E}\bigl[\mathbb{P}(II\in A-Z_I)-\mathbb{P}(Z_{II}\in A-Z_I)\bigr]\Bigr| 
\le \sup_{B\in\mathcal{R}_p}\Bigl|\mathbb{P}(II\in B)-\mathbb{P}(Z_{II}\in B)\Bigr| \le \delta_{3,n}.
\end{align*}
Combining the three bounds by the triangle inequality and taking the supremum over $A\in\mathcal{R}_p$ yields
\[
\sup_{A\in\mathcal{R}_p}\Bigl| \mathbb{P}(I+II\in A)-\mathbb{P}(Z_I+Z_{II}\in A)\Bigr|
\le \mathbb{E}[\delta_{1,n}(\mathcal{F})]+\mathbb{E}[\delta_{2,n}(\mathcal{F})]+\delta_{3,n}.
\]
Since $Z_I$ and $Z_{II}$ are independent, $Z_I+Z_{II}\sim N(0,\Sigma_I+\Sigma_{II})$. This proves the claim.
\end{proof}

\section{Proof of Auxiliary Result} \label{sec:proof-auxiliary-inid}

\subsection{Proof of \cref{lem:nonada-inid}}

\begin{proof}
The proof is almost same as that of Lemma 2 in \cite{imai2025gaussian}.
Let $\eta_N^*:=\max_{i\in[N]}\|\eta_i\|_\infty$ for every $N\geq0$. 
Define $\eta_i':=\eta_i1_{\{\eta_i^*\leq2\eta_{i-1}^*\}}$ and $\eta_i'':=\eta_i-\eta_i'$. 
From the proof of Lemma 1 in \cite{imai2025gaussian}, we can see that $\|\eta_i''\|_\infty=\|\eta_i\|_\infty1_{\{\eta_i^*>2\eta_{i-1}^*\}}\leq2(\eta_i^*-\eta_{i-1}^*)$. 
Hence
\begin{align*}
    \left\| \max_{j\in[p]}\sum_{i=1}^N\eta''_{ij} \right\|_{L^q(\mathbb P)} \le 2 \left\|\eta_N^* \right\|_{L^q(\mathbb P)}.
\end{align*}
Therefore, we complete the proof once we show
\begin{align} \label{nonneg-aim-inid}
    \left\| \max_{j\in[p]}\sum_{i=1}^N\eta'_{ij} \right\|_{L^q(\mathbb P)} \lesssim \left\| \max_{j\in[p]}\sum_{i=1}^N \mathbb{E}[\eta_{ij} \mid \mathcal{F}_{i-1} ]\right\|_{L^q(\mathbb P)} + (q + \log p)\left\|\eta_N^* \right\|_{L^q(\mathbb P)}.
\end{align}
With $\xi'_{i}:=\eta'_{i}-\E[\eta'_{i}\mid\mcl F_{i-1}]$ for every $i\in[N]$, we can bound the left hand side of \eqref{nonneg-aim-inid} as
\begin{align} \label{nonneg-eq1-inid}
     & \left\| \max_{j\in[p]}\sum_{i=1}^N\eta'_{ij} \right\|_{L^q(\mathbb P)} \nonumber\\
     & \le \left\| \max_{j\in[p]}\sum_{i=1}^N \mathbb{E}[\eta_{ij}' \mid \mathcal{F}_{i-1} ]\right\|_{L^q(\mathbb P)} + \left\|\max_{j\in[p]}\abs{\sum_{i=1}^N\xi_{ij}} \right\|_{L^q(\mathbb P)} \eqqcolon I + II.
\end{align}
By definition,
\begin{align}
    I\leq \left\|\max_{j\in[p]}\sum_{i=1}^N\E[\eta_{ij}\mid\mcl F_{i-1}] \right\|_{L^q(\mathbb P)}. \label{nonneg-eq2-inid}
\end{align}
Meanwhile, since $(\xi_i)_{i=1}^N$ is a martingale difference sequence in $\mathbb R^p$ with respect to $(\mcl F_i)_{i=0}^N$ by construction, we have by Lemma 1 in \cite{imai2025gaussian}
\begin{align*}
    II \lesssim \left\|\max_{j\in[p]} \sqrt{\sum_{i=1}^N\E[\xi_{ij}^2\mid\mcl F_{i-1}]}\right\|_{L^q(\mathbb P)} \sqrt{q + \log p} + \left\|\max_{i\in[N]} \|\xi_i'\|_\infty\right\|_{L^q(\mathbb P)} (q + \log p).
\end{align*}
Since $\|\xi_{i}\|_\infty\leq4\eta_{i-1}^*\leq4\eta_N^*$ by construction,
\begin{align*}
     II \lesssim \left\| \max_{j\in[p]}\sqrt{\eta_N^*\sum_{i=1}^N\E[|\xi_{ij}|\mid\mcl F_{i-1}]} \right\|_{L^q(\mathbb P)}\sqrt{q + \log p}+\|\eta_N^*\|_{L^q(\mathbb P)}( q +\log p).
\end{align*}
From Hölder's inequality and Young's inequality,
\begin{align*}
    & \left\| \max_{j\in[p]}\sqrt{\eta_N^*\sum_{i=1}^N\E[|\xi_{ij}|\mid\mcl F_{i-1}]} \right\|_{L^q(\mathbb P)}\sqrt{q + \log p} \\
    & \le \frac{1}{2}\left( \|\eta_N^*\|_{L^q(\mathbb P)}(q + \log p)  + \left\|\max_{j\in[p]}\sum_{i=1}^N\E[|\xi_{ij}|\mid\mcl F_{i-1}] \right\|_{L^q(\mathbb P)} \right).
\end{align*}
Since $\E[|\xi_{ij}|\mid\mcl F_{i-1}]\leq2\E[\eta_{ij}'\mid\mcl F_{i-1}]\leq2\E[\eta_{ij}\mid\mcl F_{i-1}]$, we conclude
\begin{align}
    II \lesssim \left\|\max_{j\in[p]}\sum_{i=1}^N\E[|\eta_{ij}|\mid\mcl F_{i-1}] \right\|_{L^q(\mathbb P)} + \|\eta_N^*\|_{L^q(\mathbb P)}( q +\log p) .\label{nonneg-eq3-inid}
\end{align}
Combining \eqref{nonneg-eq1-inid}--\eqref{nonneg-eq3-inid} gives \eqref{nonneg-aim-inid}. 
\end{proof}

\subsection{Proof of \cref{thm:max-is-inid} and \cref{lem:max-is-inid}} \label{subsec:proof:maximal_inid}

\begin{proof}[Proof of \cref{lem:max-is-inid}]
We prove the claim by induction on $r$.
It is trivial when $r=0$.
Next, suppose $r \ge 1$ and that the claim holds for all non-negative integers less than $r$.
We are going to show that there exists a constant $c_r \ge 1$ depending only on $r$ such that the statement of \cref{lem:max-is-inid} holds.
\begin{align*}
    J_r(\boldsymbol{\psi}_j) & = \sum_{1\le i_1 < \dots < i_r \le n}\psi_j(X_{i_1}, \dots, X_{i_r}) \\
    & = \sum_{i=1}^n \sum_{1\le h_1 < \dots < h_{r-1} < i} \psi_j(X_{h_1}, \dots, X_{h_{r-1}}, X_i)\\
    & = \frac{1}{(r-1)!} \sum_{i=1}^n \sum_{(h_1, \dots, h_{r-1})\in I_{i-1,r-1}} \psi_j(X_{h_1}, \dots, X_{h_{r-1}}, X_i) = \sum_{i=1}^n \eta_{ij},
\end{align*}
with $\eta_{ij} \coloneqq \{(r-1)!\}^{-1} \sum_{(h_1,\dots, h_{r-1})\in I_{i-1, r-1}} \psi_j(X_{h_1}, \dots, X_{h_{r-1}}, X_i)$.
Then, since  $(\eta_{ij})_{i=1}^n$ is a non-negative adapted sequence with respect to $\mathcal{F}_i \coloneqq \sigma(X_1, \dots, X_i)$, \cref{lem:nonada-inid} gives
\begin{align*}
    & \left\| \max_{j\in [p]} J_r(\boldsymbol{\psi}_j) \right\|_{L^q(\mathbb P)} \\
    & \lesssim \left\| \max_{j\in[p]} \sum_{i=1}^n \mathbb{E}[\eta_{ij} \mid \mathcal{F}_{i-1}] \right\|_{L^q(\mathbb P )} + \left\| \max_{i\in[n]} \max_{j\in[p]} \eta_{ij}\right\|_{L^q(\mathbb P)} (q + \log p) \eqqcolon I + II.
\end{align*}

In terms of $I$, observe that 
\begin{align*}
     \sum_{i=1}^n\mathbb{E}[\eta_{ij} \mid \mathcal{F}_{i-1}] 
     &= \frac{1}{(r-1)!}  \sum_{i=1}^n\sum_{(h_1,\dots, h_{r-1})\in I_{i-1, r-1}} P_i\psi_j(X_{h_1}, \dots, X_{h_{r-1}}) \\
     & =   \frac{1}{(r-1)!} \sum_{(h_1,\dots, h_{r-1})\in I_{n, r-1}}  \sum_{\substack{i\in K_{n,1}(h_1,\dots, h_{r-1}) \\ i> \max\{h_1,\dots, h_{r-1}\}}} P_i\psi_j(X_{h_1}, \dots, X_{h_{r-1}})\\
     & \le \frac{1}{(r-1)!} \sum_{(h_1,\dots, h_{r-1})\in I_{n, r-1}}  \sum_{\substack{i\in K_{n,1}(h_1,\dots, h_{r-1})}} P_i\psi_j(X_{h_1}, \dots, X_{h_{r-1}}) \\
     & = J_{r-1}\left( \sum_{\substack{i\in K_{n,1}(h_1,\dots, h_{r-1})}} P_i\psi_j \right).
\end{align*}
Therefore, the assumption of the induction gives
\begin{align*}
    I & \le c_r \max_{0 \le s \le r-1} (q + \log(np))^s   \left\| \max_{j\in[p]} \max_{(i_1,\dots, i_s) \in I_{n,s}} \right.  \\
    & \left. \sum_{(k_1,\dots, k_{r-1-s})\in K_{n,r-1-s}(i_1,\dots, i_s)} \sum_{a\in K_{n,1}(i_1,\dots, i_s, k_1,\dots, k_{r-1-s})} \left(\prod_{t=1}^{r-1-s} P_{k_t} P_a {\psi}_j\right) (X_{i_1}, \dots, X_{i_s}) \right\|_{L^q(\mathbb P )} \\
    & \le c_r \max_{0 \le s \le r-1} (q + \log(np))^s   \left\| \max_{j\in[p]} \max_{(i_1,\dots, i_s) \in I_{n,s}} \sum_{(k_1,\dots, k_{r-s})\in K_{n,r-s}(i_1,\dots, i_s)} \left(\prod_{t=1}^{r-s} P_{k_t} {\psi}_j\right) (X_{i_1}, \dots, X_{i_s}) \right\|_{L^q(\mathbb P )}.
\end{align*}

In terms of $II$, since we can regard $\eta_{ij}$ as $(r-1)$-th order $U$-statistics, the assumption of the induction and non-negativity of $\psi_j$ give
\begin{align*}
     & \left\| \max_{i\in[n]} \max_{j\in[p]} \eta_{ij}\right\|_{L^q(\mathbb P)} \\
     & \le c_r \max_{0 \le t \le r-1} (q + \log(np))^t  \left\| \max_{i\in[n]} \max_{j\in[p]} \max_{(i_1,\dots, i_t)\in I_{i-1, t}} \right. \\
    & \quad\quad\quad \left. \sum_{(k_1,\dots, k_{r-1-t})\in K_{i-1, r-1-t}(i_1,\dots, i_t)} \left( \prod_{l=1}^{r-1-t} P_{k_l} \psi_j\right) (X_{i_1}, \dots, X_{i_{r-1}}, X_i)\right\|_{L^q(\mathbb P)} \\
    & \le c_r \max_{0 \le t \le r-1} (q + \log(np))^t  \left\| \max_{j\in[p]} \max_{(i_1,\dots, i_t, i)\in I_{n, t+1}} \right. \\
    & \quad\quad\quad \left. \sum_{(k_1,\dots, k_{r-1-t})\in K_{n, r-1-t}(i_1,\dots, i_t, i)} \left( \prod_{l=1}^{r-1-t} P_{k_l} \psi_j\right) (X_{i_1}, \dots, X_{i_{r-1}}, X_i)\right\|_{L^q(\mathbb P)},
\end{align*}
where  in applying the induction hypothesis, we regard the pair $(i,j)$ as the coordinate index; hence the logarithmic factor becomes $q + \log(np)$.
Setting $s = t + 1$, we have
\begin{align*}
    II & \le c_r \max_{1 \le s \le r} (q + \log p)(q + \log(np))^{s-1}  \left\|  \max_{j\in[p]} \max_{(i_1,\dots, i_s)\in I_{n, s}} \right. \\
    & \quad\quad\quad \left. \sum_{(k_1,\dots, k_{r-s})\in K_{n, r-s}(i_1,\dots, i_s)} \left( \prod_{l=1}^{r-s} P_{k_l} \psi_j\right) (X_{i_1}, \dots, X_{i_{s}})\right\|_{L^q(\mathbb P)} \\
    & \le c_r \max_{0 \le s \le r} (q + \log(np))^{s}  \left\|  \max_{j\in[p]} \max_{(i_1,\dots, i_s)\in I_{n, s}} \sum_{(k_1,\dots, k_{r-s})\in K_{n, r-s}(i_1,\dots, i_s)} \left( \prod_{l=1}^{r-s} P_{k_l} \psi_j\right) (X_{i_1}, \dots, X_{i_{s}})\right\|_{L^q(\mathbb P)} .
\end{align*}
Summing up, we have
\begin{align*}
    & \left\| \max_{j\in [p]} J_r(\boldsymbol{\psi}_j) \right\|_{L^q(\mathbb P)} \\
    & \le c_r \max_{0 \le s \le r} (q + \log(np))^{s}  \left\|  \max_{j\in[p]} \max_{(i_1,\dots, i_s)\in I_{n, s}} \sum_{(k_1,\dots, k_{r-s})\in K_{n, r-s}(i_1,\dots, i_s)} \left( \prod_{l=1}^{r-s} P_{k_l} \psi_j\right) (X_{i_1}, \dots, X_{i_{s}})\right\|_{L^q(\mathbb P)} .
\end{align*}
\end{proof}

Before the proof of \cref{lem:max-is-inid}, we state the extension of Lemma 13 in \cite{imai2025gaussian} to the i.n.i.d. setting.
\begin{lemma} \label{lem:u-nemirovski-inid}
Let $q \ge 1$, $\boldsymbol{\psi}_j = \{\psi_{j,(i_1,\dots, i_r)}\}_{(i_1,\dots, i_r)\in I_{n,r}}$ and $\boldsymbol{\psi}^2_j = \{\psi^2_{j,(i_1,\dots, i_r)}\}_{(i_1,\dots, i_r)\in I_{n,r}}$.
Assume $\psi_j\in L^q(\otimes_{s=1}^r P_{i_s})$ ($j=1,\dots, p$) be degenerate, symmetric kernels of order $r\ge 1$ for all $(i_1, \dots, i_r)\in I_{n,r}$.
Then, there exists a constant $C_r$ depending only on $r$ such that
\begin{align*}
    \left\|  \max_{j\in[p]} |J_r(\boldsymbol\psi_j)| \right\|_{L^q(\pr)} \le C_r (q+\log p)^{r/2}\left\| \max_{j\in[p]}\sqrt{J_r(\boldsymbol\psi_j^2)}\right\|_{L^q(\pr)}.
\end{align*}
\begin{proof}
The proof of Lemma~13 in \cite{imai2025gaussian} relies only on
(i) the randomization theorem for $U$-statistics (Theorem~3.5.3 in \citealp{de1999decoupling}), 
(ii) hypercontractivity of Rademacher chaos (Theorem~3.5.2 in \citealp{de1999decoupling}),
and (iii) the identity $\E_\varepsilon[|J_r^\varepsilon(\boldsymbol\psi_j)|^2\mid X]=J_r(\boldsymbol\psi_j^2)$,
none of which requires the assumption of identical distributions.
Thus the same inequality holds even under i.n.i.d.~setting. 
\end{proof}
\end{lemma}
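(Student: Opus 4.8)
The plan is to reduce the degenerate $U$-statistic to a Rademacher chaos by randomization, control that chaos conditionally on $X=(X_i)_{i=1}^n$ through hypercontractivity, and then pay only a $(q+\log p)^{r/2}$ factor for passing to the maximum over $j\in[p]$. The guiding observation is that the three inputs used in the i.i.d.\ proof of Lemma~13 in \cite{imai2025gaussian} --- the randomization theorem, the hypercontractivity of Rademacher chaos, and the conditional second-moment identity --- are all statements about \emph{independent} coordinates and never invoke a common marginal law, so each transfers verbatim to the i.n.i.d.\ regime.

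Concretely, write $J_r^\eps(\boldsymbol\psi_j):=\sum_{1\le i_1<\cdots<i_r\le n}\eps_{i_1}\cdots\eps_{i_r}\,\psi_j(X_{i_1},\dots,X_{i_r})$, where $(\eps_i)_{i=1}^n$ is a Rademacher sequence independent of $X$. \emph{Step A.} Applying the randomization theorem (Theorem~3.5.3 in \citealp{de1999decoupling}) to the $\mathbb R^p$-valued degenerate kernel $(\psi_1,\dots,\psi_p)$ equipped with the norm $\|\cdot\|_\infty$ and the convex increasing function $\Phi(x)=x^q$, I would obtain
\[
\Big\|\max_{j\in[p]}|J_r(\boldsymbol\psi_j)|\Big\|_{L^q(\pr)}\le C_r\Big\|\max_{j\in[p]}|J_r^\eps(\boldsymbol\psi_j)|\Big\|_{L^q(\pr)}.
\]
\emph{Step B.} Conditionally on $X$, each $J_r^\eps(\boldsymbol\psi_j)$ is a homogeneous Rademacher chaos of degree $r$, so hypercontractivity (Theorem~3.5.2 in \citealp{de1999decoupling}) gives, for every $s\ge2$,
\[
\E_\eps\big[\,|J_r^\eps(\boldsymbol\psi_j)|^{s}\mid X\big]^{1/s}\le (s-1)^{r/2}\,\E_\eps\big[\,|J_r^\eps(\boldsymbol\psi_j)|^{2}\mid X\big]^{1/2}.
\]
\emph{Step C.} Because the kernels are symmetric, $\E_\eps[\eps_{S}\eps_{S'}]$ vanishes unless the index sets $S,S'$ coincide, which collapses all cross terms and yields $\E_\eps[\,|J_r^\eps(\boldsymbol\psi_j)|^{2}\mid X]=J_r(\boldsymbol\psi_j^2)$; in particular $J_r(\boldsymbol\psi_j^2)\ge0$, so its square root is well defined.

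It remains to assemble these with a maximal bound. I would fix the exponent $s:=2\vee q\vee\log p$ and estimate, for each fixed $X$, using $\max_j|\cdot|^{s}\le\sum_j|\cdot|^{s}$, conditional Jensen (valid since $s\ge q$), and Steps~B--C,
\[
\E_\eps\big[\max_{j}|J_r^\eps(\boldsymbol\psi_j)|^q\mid X\big]\le\Big(\sum_{j}\E_\eps\big[|J_r^\eps(\boldsymbol\psi_j)|^{s}\mid X\big]\Big)^{q/s}\le\Big(p\,(s-1)^{rs/2}\max_j J_r(\boldsymbol\psi_j^2)^{s/2}\Big)^{q/s}.
\]
Since $p^{1/s}\le e$ for $s\ge\log p$, the right-hand side is at most $e^{q}(s-1)^{rq/2}\big(\max_j\sqrt{J_r(\boldsymbol\psi_j^2)}\big)^{q}$; taking $\E_X$, extracting the $q$-th root, and noting $(s-1)^{r/2}\le C_r(q+\log p)^{r/2}$ in all three cases defining $s$, I would get
\[
\Big\|\max_{j}|J_r^\eps(\boldsymbol\psi_j)|\Big\|_{L^q(\pr)}\le C_r (q+\log p)^{r/2}\Big\|\max_{j}\sqrt{J_r(\boldsymbol\psi_j^2)}\Big\|_{L^q(\pr)},
\]
which combined with Step~A is exactly the claim. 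The main obstacle is Step~A: one must invoke the randomization theorem in its \emph{vector-valued} form (norm $\|\cdot\|_\infty$ on $\mathbb R^p$, i.e.\ the coordinatewise maximum) and, crucially, in the \emph{single}-Rademacher-sequence version that is compatible with the identity of Step~C, while checking that the cited results of \cite{de1999decoupling} are genuinely stated for independent rather than identically distributed summands. Once this is secured, Steps~B--C and the maximal bound are routine and entirely insensitive to the non-identical marginals.
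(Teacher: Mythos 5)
Your proposal is correct and follows essentially the same route as the paper: the paper's proof simply observes that Lemma~13 of \cite{imai2025gaussian} rests on exactly the three ingredients you use — the randomization theorem (Theorem~3.5.3 of \citealp{de1999decoupling}), hypercontractivity of Rademacher chaos (Theorem~3.5.2), and the conditional identity $\E_\varepsilon[|J_r^\varepsilon(\boldsymbol\psi_j)|^2\mid X]=J_r(\boldsymbol\psi_j^2)$ — none of which needs identical distributions, and your Steps A--C together with the $s=2\vee q\vee\log p$ maximal argument are precisely a reconstruction of that proof in the i.n.i.d.\ setting (your concern about index-dependent kernels is handled in the paper's remark by regarding $(i,X_i)$ as a single coordinate).
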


\begin{rmk}
Although \cite{de1999decoupling} state the hypercontractivity of Rademacher chaos and the randomization theorem for $U$-statistics with index‑independent kernels, we can regard $(i, X_i)$ as a single coordinate so we can utilize both theorems in our setting.
\end{rmk}

\begin{proof}[Proof of \cref{thm:max-is-inid}]
From \cref{lem:u-nemirovski-inid}, we have
\begin{align*}
    \left\|  \max_{j\in[p]} |J_r(\boldsymbol\psi_j)| \right\|_{L^q(\pr)} \le C_r (q+\log p)^{r/2}\left\| \max_{j\in[p]}\sqrt{J_r(\boldsymbol\psi_j^2)}\right\|_{L^q(\pr)}.
\end{align*}
Also, Lyapunov’s inequality gives
\begin{align*}
    \left\| \max_{j\in[p]}\sqrt{J_r(\boldsymbol\psi_j^2)}\right\|_{L^q(\pr)} \le  \left\| \max_{j\in[p]}J_r(\boldsymbol\psi_j^2)\right\|_{L^{1 \vee q/2}(\pr)}^{1/2}.
\end{align*}
Applying \cref{lem:max-is-inid} to the right hand side gives the desired result.
\end{proof}

\subsection{Proof of \cref{lem:max-rosenthal-inid}}
Before the proof of \cref{lem:max-rosenthal-inid}, we restate Lemma 16 in \cite{imai2025gaussian} adapted to our setting. 
\begin{lemma} \label{lem:nemirovski-Lp-inid}
Let $t \ge 1$. Assume $\psi_{j,(i,m)} \in L^t(P_i\otimes P_m)$ for all $j\in[p]$ and $(i,m)\in I_{n,2}$.
Then, there exists a universal constant $C$ such that
\begin{align}
    & \E\left[ \max_{j\in[p]} \sum_{i=1}^n \int \left| \sum_{m\in[n]:m\neq i} \left\{\psi_j(X_m,x_i) - (P_m\psi_j)(x_i)\right\} \right|^t dP_i(x_i) \right] \nonumber\\
    & \le C (t + \log p)^{t/2} \E\left[ \max_{j\in[p]} \sum_{i=1}^n \int \left( \sum_{m\in[n]:m\neq i} \psi_j^2(x_i,X_m) \right)^{t/2} dP_i(x_i)   \right]. \label{eq:nemirovski-Lp-inid}
\end{align}
\begin{proof}
    The proof is almost identical to that of Lemma 16 in \cite{imai2025gaussian}; we only replace the law of common distribution in \cite{imai2025gaussian} with the product measure.
\end{proof}
\end{lemma}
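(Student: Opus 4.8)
The plan is to read \cref{eq:nemirovski-Lp-inid} as a Banach-space-valued maximal inequality in $L^t(P_i)$. Fixing $i$, set $g_{m,j}(x_i):=\psi_j(x_i,X_m)$ for $m\neq i$ (using symmetry of the kernel) and $S_j:=\sum_{m\neq i}\{g_{m,j}-P_m\psi_j\}$, so that the left-hand side is $\E[\max_{j\in[p]}\|S_j\|_{L^t(P_i)}^t]$; for fixed $j$ the summands $g_{m,j}-P_m\psi_j$ are independent across $m$ (this is the only probabilistic structure I will use) and mean-zero as elements of $L^t(P_i)$, while the right-hand side equals $C(t+\log p)^{t/2}\,\E[\max_j\|G_j\|_{L^t(P_i)}^t]$ with the square function $G_j:=(\sum_{m\neq i}g_{m,j}^2)^{1/2}$. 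I would establish the cleaner norm form $\E[\max_j\|S_j\|_{L^t(P_i)}^t]^{1/t}\le C\sqrt{t+\log p}\,\E[\max_j\|G_j\|_{L^t(P_i)}^t]^{1/t}$ with a universal $C$, and then raise both sides to the $t$-th power. This mirrors the proof of Lemma 16 in \cite{imai2025gaussian}; the sole adaptation to the i.n.i.d.\ setting is that the centering $P_m\psi_j$ and the law of the symmetrizing copy are now index-dependent, which turns out to be immaterial.

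\textbf{Symmetrization.} Let $(X_m')_{m\neq i}$ be an independent copy of $(X_m)_{m\neq i}$ (each $X_m'\sim P_m$) and $(\varepsilon_m)_{m\neq i}$ i.i.d.\ Rademacher, both independent of $X$. Writing $P_m\psi_j=\E_{X'}[g_{m,j}']$ and using Jensen's inequality for the convex functional $f\mapsto\max_j\|f_j\|_{L^t(P_i)}^t$ gives $\E[\max_j\|S_j\|^t]\le\E[\max_j\|\sum_m(g_{m,j}-g_{m,j}')\|^t]$; since $g_{m,j}-g_{m,j}'$ is symmetric, inserting the $\varepsilon_m$ and then applying the triangle inequality inside the maximum together with $(a+b)^t\le 2^{t-1}(a^t+b^t)$ yields $\E[\max_j\|S_j\|_{L^t(P_i)}^t]\le 2^t\,\E_{X,\varepsilon}[\max_j\|T_j\|_{L^t(P_i)}^t]$, where $T_j:=\sum_{m\neq i}\varepsilon_m g_{m,j}$. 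The copy-based symmetrization is precisely what lets the \emph{uncentered} square function $G_j$ (rather than its centered analogue) appear on the right, and it uses nothing beyond independence across $m$, so it is insensitive to the laws $P_m$ being distinct.

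\textbf{Conditioning and the chaos bound.} Next I would condition on $X$, so the $g_{m,j}$ are frozen and only the signs are random, and fix the moment $r:=t+\log p\ge t$. For a single $j$ and each fixed $x_i$, hypercontractivity of Rademacher chaos (Theorem 3.5.2 in \cite{de1999decoupling}) gives the scalar Khintchine bound $\E_\varepsilon[|T_j(x_i)|^{r}]^{1/r}\le C\sqrt{r}\,G_j(x_i)$. Combining this with Minkowski's integral inequality to interchange the $L^{r/t}(\varepsilon)$- and $L^t(P_i)$-norms yields $\E_\varepsilon[\|T_j\|_{L^t(P_i)}^{r}]^{t/r}\le\int\|T_j(x_i)\|_{L^r(\varepsilon)}^t\,dP_i(x_i)\le (C\sqrt r)^t\|G_j\|_{L^t(P_i)}^t$. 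Finally, to pass to the maximum over $j$ at the cost of only a $p^{t/r}$ factor I use $\max_j a_j^t=(\max_j a_j^{r})^{t/r}\le(\sum_j a_j^{r})^{t/r}$ with $a_j:=\|T_j\|_{L^t(P_i)}$, Jensen's inequality for $x\mapsto x^{t/r}$, and the displayed $r$-th moment bound, obtaining $\E_\varepsilon[\max_j\|T_j\|_{L^t(P_i)}^t]\le (C\sqrt r)^t\,p^{t/r}\max_j\|G_j\|_{L^t(P_i)}^t$. Taking $\E_X$ and recalling $r=t+\log p$ makes $p^{t/r}\le e^{t}$, which is absorbable at the level of the $t$-th root; this is the step that produces the factor $(t+\log p)^{t/2}$.

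\textbf{Main obstacle.} The crux is keeping $\max_{j}$ \emph{outside} the $L^t(P_i)$-integral while paying only $\sqrt{t+\log p}$. A naive route, pulling the maximum inside the integral or raising $a_j$ to a power $s\asymp\log p$ before applying Khintchine, couples the integrability exponent $t$ and the cardinality exponent $\log p$ multiplicatively, producing the too-large factor $(t\log p)^{t/2}$ or bounding the wrong quantity $\E[\int\max_j G_j^t\,dP_i]$. The fix is to apply Khintchine at the single exponent $r=t+\log p$ and to use Minkowski's integral inequality so that the chaos moment and the $P_i$-integral never multiply; then $\log p$ enters only additively. For the i.n.i.d.\ transfer there is no genuine difficulty: symmetrization uses only cross-$m$ independence, and hypercontractivity is a statement about the $\varepsilon_m$ alone, so the distinct marginals $P_m$ and the index-dependent centerings $P_m\psi_j$ are carried through verbatim.
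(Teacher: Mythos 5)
Your proof is correct and follows essentially the same route as the paper, whose proof simply defers to Lemma 16 of \cite{imai2025gaussian}: symmetrization with an independent copy (valid because only cross-$m$ independence is used, so the index-dependent laws $P_m$ and centerings $P_m\psi_j$ pass through verbatim), scalar Khintchine/hypercontractivity at the single exponent $r=t+\log p$, a Minkowski integral-inequality interchange of the $L^r(\varepsilon)$- and $L^t(P_i)$-norms, and an $\ell_r$-bound over $j\in[p]$ costing $p^{t/r}\le e^t$. The only discrepancy is cosmetic: your argument produces a constant of the form $C^t$ (equivalently $(C(t+\log p))^{t/2}$) rather than the literal universal $C$ displayed in \cref{eq:nemirovski-Lp-inid}, which is immaterial here since the lemma is only invoked at fixed $t\in\{2,4\}$.
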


\begin{proof}[Proof of \cref{lem:max-rosenthal-inid}] 
First, we prove \cref{eq:influence-mom-inid}. 
By \cref{lem:nemirovski-Lp-inid},
\begin{align*}
    I 
    & \coloneqq   \E\left[ \max_{j\in[p]} \sum_{i=1}^n \int \left| \sum_{m\in[n]:m>i} \psi_j(x_i,X_m) \right|^4 dP_i(x_i) \right] \\
    & = \E\left[ \max_{j\in[p]} \sum_{i=1}^n \int \left| \sum_{m\in[n]:m\neq i} 1\{m>i\}\psi_j(x_i,X_m) \right|^4 dP_i(x_i) \right] \\
    & \lesssim  \E\left[  \max_{j\in[p]} \sum_{i=1}^n \int \left( \sum_{m\neq i} 1\{m>i\}\psi_j^2(x_i,X_m)\right)^2 dP_i(x_i)\right] (\log p)^2\\
    & \le \E\left[  \max_{j\in[p]} \sum_{i=1}^n \int \left( \sum_{m\neq i} \psi_j^2(x_i,X_m)\right)^2 dP_i(x_i)\right] (\log p)^2\\
    & = \E\left[ \max_{j\in[p]}  \sum_{i=1}^n  \int \left( \sum_{m\neq i} (P_m\psi_j^2)(x_i)+ \psi_j^2(x_i,X_m) - (P_m\psi_j^2)(x_i)\right)^2 dP_i(x_i)\right](\log p)^2 \\
    & \lesssim   \max_{j\in[p]} \sum_{i=1}^n \int \left( \sum_{m\neq i} (P_m\psi_j^2)(x_i) \right)^2dP_i(x_i)  (\log p)^2\\
    & \quad + \E\left[ \max_{j\in[p]} \sum_{i=1}^n\int \left|\sum_{m\neq i}\{\psi_j^2(x_i,X_m) - (P_m\psi_j^2)(x_i)\}\right|^2dP_i(x_i) \right]  (\log p)^2\\
    & \eqqcolon I_1 + I_2.
\end{align*}
By definition,
\begin{align*}
    I_1 \le  \max_{j\in[p]} \sum_{i=1}^n \left\|\sum_{m\neq i}P_m(\psi_j^2)\right\|_{L^2(P_i)}^2  \log^2 p.
\end{align*}
Applying \cref{lem:nemirovski-Lp-inid} to functions $(y,x) \to \psi_j(y,x)^2$, we have
\begin{align*}
    I_2 \le  \E\left[  \max_{j\in[p]} \sum_{i=1}^n \int \sum_{m\neq i} \psi^4_j(x_i,X_m) dP_i(x_i)  \right](\log p)^3 =  \E\left[  \max_{j\in[p]}  \sum_{i=1}^n \sum_{m\neq i} P_i\psi^4_j(X_m)  \right](\log p)^3.
\end{align*}
By Lemma~9 in \cite{CCK15}, 
\begin{align*}
   & \E\left[  \max_{j\in[p]}  \sum_{i=1}^n\sum_{m\neq i} P_i\psi^4_j(X_m)  \right] \\
    &\lesssim \max_{j\in[p]} \sum_{i=1}^n \sum_{m\neq i} P_mP_i\psi^4_j + \E\left[ \max_{j\in[p]} \max_{m\in[n]}  \sum_{i=1}^n P_i\psi^4_j(X_m) \right] \log p \\
    & \le \max_{j\in[p]}\sum_{i=1}^n \sum_{m\neq i} \|\psi_j\|_{L^4(P_i\otimes P_m)}^4 + \E\left[ \max_{j\in[p]} \max_{m\in[n]}  \sum_{i\neq m} P_i\psi^4_j(X_m) \right] \log p.
\end{align*}
Consequently, we have \cref{eq:influence-mom-inid}.
The proof of \cref{eq:influence-mom-inid-<} is almost same as that of \cref{eq:influence-mom-inid}.

Next, we prove \cref{eq:influence-mom-inid-2}.
\begin{align} 
     II &\coloneqq  \E\left[ \max_{j\in[p]} \sum_{i=1}^n \left| \sum_{m\in[n]:m\neq i} \psi_j(X_i,X_m) \right|^4  \right] \nonumber \\
     & \le 8  \left( \E\left[ \max_{j\in[p]} \sum_{i=1}^n  \left| \sum_{m\in[n]:m<i} \psi_j(X_i,X_m) \right|^4  \right] + \E\left[ \max_{j\in[p]} \sum_{i=1}^n  \left| \sum_{m\in[n]:m>i} \psi_j(X_i,X_m) \right|^4  \right] \right) \nonumber\\
     & \eqqcolon 8(II_1 + II_2).
\end{align}
We can construct a bound on $II_1$ likewise the proof of Lemma~3 in \cite{imai2025gaussian}, which uses Lemma~2 in \cite{imai2025gaussian}. 
Define a filtration $(\mathcal{G}_i)_{i=1}^n$ as $\mathcal{G}_i \coloneqq \sigma(X_1, \dots, X_i)$ for $i\in[n]$.
Also, for every $i\in[n]$, define a random vector $\eta_i^<= (\eta_{i1}^<, \dots, \eta_{ip}^<)^\top$ as
\begin{align*}
    \eta_{ij}^< \coloneqq \left| \sum_{m\in[n]:m<i} \psi_j(X_i,X_m) \right|^4, \quad j = 1,\dots, p.
\end{align*}
Then, $(\eta_i^<)_{i=1}^n$ is adapted to the filtration $(\mathcal{G}_i)_{i=1}^n$.
Hence \cref{lem:nonada-inid} or Lemma~2 in \cite{imai2025gaussian} give
\begin{align*}
    II_1 \lesssim  \E\left[ \max_{j\in[p]} \sum_{i=1}^n \E[\eta_{ij}^< \mid \mathcal{G}_{i-1}] \right] + \E\left[  \max_{j\in[p]} \max_{i\in[n]} \eta_{ij}^< \right] \log p  \eqqcolon III_1 + IV_1 \log p,
\end{align*}
where we set $\mathcal{G}_0 \coloneqq \{\emptyset,\Omega\}$. 
Since $X_{i}$ is independent of $\mathcal{G}_{i-1}$ for every $i$, by the same argument as that of the proof of Lemma~3 in \cite{imai2025gaussian}, \cref{eq:influence-mom-inid-<} gives
\begin{align*}
    III_1 &\lesssim  \max_{j\in[p]} \sum_{i=1}^n \left\|\sum_{m\neq i}P_m(\psi_j^2)\right\|_{L^2(\mathbb P)}^2  \log^2 p \nonumber \\
      & \quad\quad + \max_{j\in[p]} \sum_{i=1}^n\sum_{m\neq i} \|\psi_j\|_{L^4(P_i\otimes P_m)}^4 \log^3 p + \E\left[ \max_{j\in[p]} \max_{m\in[n]} \sum_{i\neq m} P_i\psi^4_j(X_m) \right] \log^4 p.
\end{align*}
To bound $IV_1$, observe that
\begin{align*}
    IV_1 = \E\left[  \max_{j\in[p]} \max_{i\in[n]} \left| \sum_{m\in[n]:m<i} \psi_j(X_i,X_m) \right|^4  \right] = \E\left[  \max_{j\in[p]} \max_{i\in[n]} \left|\sum_{m=1}^n Y_{m,(i,j)}^{<} \right|^4 \right],
\end{align*}
where $Y_{m,(i,j)}^{<} = \psi_j(X_i,X_m)1\{m<i\}$. 
Unlike \cite{imai2025gaussian}, we can not use time-reversal symmetry, so we define $\tilde{\mathcal{G}}^{(i)}_m \coloneqq \sigma(X_1, \dots, X_m, X_i)$.
Then, $(Y_{m,(i,j)}^<)_{m=1}^n$ is a martingale difference sequence with respect to $(\tilde{\mathcal{G}}_m^{(i)})_{m=0}^n$ for all $i\in[n]$ and $j\in[p]$. 
Hence Lemma~1 in \cite{imai2025gaussian} gives 
\begin{align*}
    IV_1 & \lesssim \E\left[  \max_{j\in[p]} \max_{i\in[n]}\left( \sum_{m=1}^n \E\left[ (Y_{m,(i,j)}^<)^2 \mid \tilde{\mathcal{G}}^{(i)}_{m-1} \right] \right)^2 \right] \log^2(np) + \E\left[ \max_{j\in[p]}\max_{(i,m)\in I_{n,2}}  |Y_{m,(i,j)}^<|^4 \right] \log^4(np) \\
    & \le  \E\left[  \max_{j\in[p]} \max_{i\in[n]} \left( \sum_{m\neq i} P_m \psi_j^2 (X_i) \right)^2 \right] \log^2(np) + \E\left[ \max_{j\in[p]} \max_{(i,m)\in I_{n,2}}   \psi_j^4(X_i,X_m) \right] \log^4(np) \\
    & = \left\|  \max_{j\in[p]} \max_{i\in[n]} \sum_{m\neq i}   P_m \psi_j^2 (X_i) \right\|_{L^2(\mathbb P )}^2 \log^2(np) + \E\left[ \max_{j\in[p]} \max_{(i,m)\in I_{n,2}}   \psi_j^4(X_i,X_m) \right] \log^4(np).
\end{align*}

Define a filtration $(\mathcal{G}_i^>)_{i=1}^n$ as $\mathcal{G}_i^> \coloneqq \sigma(X_i, X_{i+1}, \dots, X_n)$.
Also, for every $i\in[n]$, define a random vector $\eta^>_i = (\eta_{i1}^>,\dots, \eta_{ip}^>)^\top$ as 
\begin{align*}
    \eta_{ij}^> \coloneqq \left|  \sum_{m\in[n]:m>i} \psi_j(X_i,X_m) \right|^4, \quad j=1,\dots, p.
\end{align*}
Then, $(\eta_i^>)_{i=1}^n$ is adapted to the filtration $(\mathcal{G}_i^>)_{i=1}^n$.
Hence \cref{lem:nonada-inid} or Lemma~2 in \cite{imai2025gaussian} gives
\begin{align*}
    II_2 \lesssim \E\left[ \max_{j\in[p]} \sum_{i=1}^n \E[\eta^>_{ij} \mid \mathcal{G}_{i+1}^>] \right] + \E\left[\max_{j\in[p]} \max_{i\in[n]} \eta^>_{ij} \right] \log p \eqqcolon III_2 + IV_2\log p,
\end{align*}
where we set $\mathcal{G}^>_{n+1} \coloneqq\{\emptyset, \Omega\}$. 
Since $X_i$ is independent of $\mathcal{G}_{i+1}^>$, for every $i$, by the same argument as that of the proof of Lemma~3 in \cite{imai2025gaussian}, \cref{eq:influence-mom-inid} gives
\begin{align*}
    III_2 &\lesssim \max_{j\in[p]} \sum_{i=1}^n \left\|\sum_{m\neq i}P_m(\psi_j^2)\right\|_{L^2(\mathbb P)}^2  \log^2 p \nonumber \\
      & \quad\quad + \max_{j\in[p]} \sum_{i=1}^n\sum_{m\neq i} \|\psi_j\|_{L^4(P_i\otimes P_m)}^4 \log^3 p + \E\left[ \max_{j\in[p]} \max_{m\in[n]} \sum_{i\neq m} P_i\psi^4_j(X_m) \right] \log^4 p.
\end{align*}
To bound $IV_2$, observe that
\begin{align*}
    IV_2 = \E\left[ \max_{j\in[p]}\max_{i\in[n]}   \left| \sum_{m\in[n]:m>i} \psi_j(X_i,X_m) \right|^4  \right] = \E\left[ \max_{j\in[p]} \max_{i\in[n]} \left|\sum_{m=1}^n Y_{m,(i,j)}^{>} \right|^4 \right],
\end{align*}
where $Y_{m,(i,j)}^{>} = \psi_j(X_i,X_m)1\{m>i\}$. 
Observe that $(Y_{m,(i,j)}^{>})_{m=1}^n$ is martingale difference sequence with respect to $(\mathcal{G}_m)_{m=0}^n$ for all $i\in[n]$ and $j\in[p]$. 
Hence Lemma~1 in \cite{imai2025gaussian} gives
\begin{align*}
    IV_2
    & \lesssim \E\left[\max_{j\in[p]} \max_{i\in[n]}  \left( \sum_{m=1}^n \E[(Y^>_{m,(i,j)})^2 \mid \mathcal{G}_{m-1}] \right)^2\right] \log^2 (np)  + \E\left[  \max_{j\in[p]}\max_{(i,m)\in I_{n,2}} |Y^>_{m,(i,j)}|^4 \right] \log^4 (np) \\
    & \le  \E\left[  \max_{j\in[p]} \max_{i\in[n]} \left( \sum_{m\neq i} P_m \psi_j^2 (X_i) \right)^2 \right] \log^2(np) + \E\left[ \max_{j\in[p]} \max_{(i,m)\in I_{n,2}}   \psi_j^4(X_i,X_m) \right] \log^4(np). 
\end{align*}

Summing up,
\begin{align*}
    II &\lesssim \max_{j\in[p]} \sum_{i=1}^n \left\|\sum_{m\neq i}P_m(\psi_j^2)\right\|_{L^2(\mathbb P)}^2  \log^2 p \nonumber \\
      & \quad + \max_{j\in[p]} \sum_{i=1}^n\sum_{m\neq i} \|\psi_j\|_{L^4(P_i\otimes P_m)}^4 \log^3 p + \E\left[ \max_{j\in[p]} \max_{m\in[n]} \sum_{i\neq m} P_i\psi^4_j(X_m) \right] \log^4 p\\
      & \quad+  \left\|  \max_{j\in[p]} \max_{i\in[n]} \sum_{m\neq i}   P_m \psi_j^2 (X_i) \right\|_{L^2(\mathbb P )}^2\log^3(np) + \E\left[ \max_{j\in[p]} \max_{(i,m)\in I_{n,2}}   \psi_j^4(X_i,X_m) \right] \log^5(np). 
\end{align*}
\end{proof}

\subsection{Proof of \cref{lem:inid:eval-step2}} \label{subsec:proof-lem:inid:eval-step2}
Before the proof of \cref{lem:inid:eval-step2}, we introduce the following auxiliary results.
The proof of these results is in \cref{subsec:proof:lemlem5-inid}.

\begin{lemma} \label{lem:lem5-1-inid}
It holds that
\begin{align}
    &\max_{(j,k)\in[p]^2} \sum_{i=1}^n \int (\chi^{(1,1)}_{i,jk})^2(x_i)dP_i(x_i) \le   \max_{j\in[p]} \sum_{i=1}^n \|\pi_{1,i}\psi_j\|_{L^4(P_i)}^4, \label{eq:lem5-1-inid} \\
    & \left\| \max_{(j,k)\in[p]^2} \max_{i\in[n]} (\chi^{(1,1)}_{i,jk})^2(X_i) \right\|_{L^{1}(\pr)} \le  \E\left[\max_{j\in[p]} \max_{i\in[n]} (\pi_{1,i}\psi_j)^4(X_i) \right].\label{eq:lem5-2-inid} 
\end{align}

\begin{lemma} \label{lem:lem5-2-inid}
It holds that
\begin{align}
    & \max_{(j,k)\in[p]^2} \sum_{i=1}^n\sum_{m\neq i} \int (\tilde{\chi}_{im,jk}^{(1,2)})^2(x_i,x_m) dP_i(x_i)dP_m(x_m) \nonumber \\
    &  \le  4\left(\max_{j\in[p]}\sum_{i=1}^n \| \pi_{1,i}\psi_j(X_i) \|_{L^4{(P_i)}}^4 \right)^{1/2}\left(\max_{k\in[p]} \sum_{i=1}^n \left\|\sum_{m\neq i} P_m(|\pi_{2,im}\psi_k|^2) \right\|_{L^2(P_i)}^2\right)^{1/2}, \label{eq:lem5-3-inid}\\
    & \left\| \max_{(j,k)\in[p]^2} \max_{i\in[n]}\sum_{m\neq i} \int (\tilde{\chi}_{im,jk}^{(1,2)})^2(X_i,x_m)dP_m(x_m) \right\|_{L^1(\pr)} \nonumber\\
    &  \le \E\left[ \max_{j\in[p]} \max_{i\in[n]} (\pi_{1,i}\psi_j)^4(X_i) \right]^{1/2} \left\| \max_{k\in[p]} \max_{i\in[n]} \sum_{m\neq i} P_m(\pi_{2,im}\psi_k)^2(X_i) \right\|_{L^2(\pr)} \nonumber\\
    & \quad +  \left(\max_{j\in[p]} \sum_{m=1}^n \|\pi_{1,m}\psi_j\|_{L^4(P_m)}^4 \right)^{1/2} \E\left[ \max_{k\in[p]} \max_{i\in[n]} \sum_{m\neq i}  P_m(\pi_{2,mi}\psi_k)^4(X_i) \right]^{1/2}, \label{eq:lem5-4-inid}\\
    &  \left\| \max_{(j,k)\in[p]^2} \max_{(i,m)\in I_{n,2}} (\tilde{\chi}_{im,jk}^{(1,2)})^2(X_i,X_m) \right\|_{L^1(\pr)} \nonumber \\
    & \quad\le 3\E\left[ \max_{j\in[p]} \max_{i\in[n]} (\pi_{1,i}\psi_j)^4(X_i) \right]^{1/2}  \left\| \max_{(j,k)\in[p]^2} \max_{(i,m)\in I_{n,2}} |\pi_{2,im}\psi_k| \right\|^{2}_{L^4(\pr)}. \label{eq:lem5-5-inid}
\end{align}
\end{lemma}

\begin{lemma}\label{lem:lem5-add1-inid}
It holds that
\begin{align}
    &\max_{(j,k)\in[p]^2} \sum_{i=1}^n \int (\varphi^{(1,2)}_{i,jk})^2(x_i)dP_i(x_i) \le 4  \max_{(j,k)\in[p]^2} \sum_{i=1}^n \left\|\sum_{m:m\neq i}\pi_{1,i}\psi_j \star_m^1 \pi_{2,im}\psi_k \right\|_{L^2(P_i)}^2,  \label{eq:lem5-add1-inid} \\
    & \left\| \max_{(j,k)\in[p]^2} \max_{i\in[n]} (\varphi^{(1,2)}_{i,jk})^2(X_i) \right\|_{L^{1}(\pr)} \nonumber \\
    & \quad \le  4 \left(\max_{j\in[p]} \sum_{i=1}^n \|\pi_{1,i}\psi_j\|_{L^2(P_i)}^2 \right) \left\|  \max_{j\in[p]}\max_{i\in[n]} \sum_{m\neq i}P_m(\pi_{2,im}\psi_j)^2(X_i) \right\|_{L^1(\pr)}. \label{eq:lem5-add2-inid} 
\end{align}
\end{lemma}

\begin{lemma} \label{lem:lem5-3-inid-1}
It holds that
\begin{align}
    & \max_{(j,k)\in[p]^2} \sum_{i=1}^n\sum_{m\neq i} \int (\tilde{\chi}_{im,jk}^{(2,2),\mathtt{diag}})^2(x_i,x_m) dP_i(x_i)dP_m(x_m)   \nonumber \\
    & \quad\qquad\qquad\qquad\qquad\qquad\le   \frac{1}{2}\max_{j\in[p]}\sum_{i=1}^n \sum_{m\neq i} \|\pi_{2,im} \psi_j\|_{L^4(P_i\otimes P_m)}^4 , \label{eq:lem5-6-inid}\\ 
    & \left\| \max_{(j,k)\in[p]^2} \max_{i\in[n]} \sum_{m\neq i} \int (\tilde{\chi}_{im,jk}^{(2,2), \mathtt{diag}})^2(X_i,x_m)dP_m(x_m) \right\|_{L^1(\pr)} \nonumber \\
    & \quad\qquad\qquad\qquad\qquad\qquad\le \frac{1}{2}\E\left[ \max_{j\in[p]}  \max_{i\in[n]} \sum_{m\neq i} P_m(\pi_{2,im}\psi_j)^4(X_i) \right] ,\label{eq:lem5-7-inid}\\
    & \left\| \max_{(j,k)\in[p]^2} \max_{(i,m)\in I_{n,2}} (\tilde{\chi}_{im,jk}^{(2,2), \mathtt{diag}})^2(X_i,X_m) \right\|_{L^1(\pr)} \nonumber \\
    & \quad\qquad\qquad\qquad\qquad\qquad \le \frac{1}{2}\left\| \max_{j\in[p]} \max_{(i,m)\in I_{n,2}} |\pi_{2,im}\psi_j| \right\|_{L^4(\mathbb P)}^4.\label{eq:lem5-8-inid}
\end{align}
\end{lemma}
\begin{lemma} \label{lem:lem5-3-inid-2}
It holds that
\begin{align}
    & \max_{(j,k)\in[p]^2} \sum_{i=1}^n\sum_{m\neq i}\sum_{l\neq i,m} \int (\tilde{\chi}^{(2,2)}_{iml,jk})^2(x_i,x_m,x_l) dP_i(x_i)dP_m(x_m)dP_l(x_l)\nonumber \\
    & \quad \le  \max_{j\in[p]} \sum_{i=1}^n \left\| \sum_{m\neq i} P_m(\pi_{2,im}\psi_j)^2 \right\|_{L^2(P_i)}^2,  \label{eq:lem5-9-inid} \\
    & \left\| \max_{(j,k)\in[p]^2} \max_{i\in[n]} \sum_{m\neq i}\sum_{l\neq i,m}\int (\tilde{\chi}^{(2,2)}_{iml,jk})^2(X_i,x_m,x_l)dP_m(x_m)dP_l(x_l) \right\|_{L^1(\pr)} \nonumber \\
    & \quad\le \frac{1}{3} \left\| \max_{j\in[p]} \max_{i\in[n]} \sum_{m\neq i} P_m(\pi_{2,im}\psi_j)^2(X_i) \right\|_{L^2(\mathbb P )}^2 \nonumber\\
    & \quad + \frac{2}{3}\mathbb{E}\left[ \max_{j\in[p]} \max_{i\in[n]}  \sum_{m\neq i} P_m (\pi_{2,im}\psi_j)^4(X_i)\right]^{1/2} \left(\max_{j\in[p]} \sum_{i=1}^n \left\| \sum_{m\neq i} P_m(\pi_{2,im}\psi_j)^2 (X_i)\right\|_{L^2(P_i)}^2 \right)^{1/2},
    \label{eq:lem5-10-inid}\\
    &  \left\| \max_{(j,k)\in[p]^2} \max_{(i,m)\in I_{n,2}}\sum_{l\neq i,m} \int (\tilde{\chi}^{(2,2)}_{iml,jk})^2(X_i,X_m,x_l)dP_l(x_l)\right\|_{L^1(\pr)}  \nonumber \\
    & \quad\le \frac{1}{3} \left\| \max_{j\in[p]} \max_{(i,m)\in I_{n,2}} |\pi_{2,im}\psi_j| \right\|^{2}_{L^4(\mathbb P)} \left\| \max_{j\in[p]} \max_{i\in[n]} \sum_{m\neq n} P_m(\pi_{2,im}\psi_j)^2(X_i) \right\|_{L^2(\mathbb P)} \nonumber\\
    & \quad + \frac{1}{3} \mathbb{E}\left[  \max_{j\in[p]} \max_{i\in[n]} \sum_{m\neq i} P_m(\pi_{2,im}\psi_j)^4(X_i)\right]^{1/2} \left( \max_{j\in[p]} \sum_{i=1}^n \left\| \sum_{m\neq i } P_m(\pi_{2,im}\psi_j)^2(X_i)\right\|_{L^2(P_i)}^2 \right)^{1/2}  \nonumber\\
    & \quad + \frac{1}{3}  \E\left[ \max_{j\in[p]}  \max_{i\in [n]}   \sum_{m\neq i} P_m(\pi_{2,im}\psi_j)^4(X_i)\right], \label{eq:lem5-11-inid}\\
    & \left\| \max_{(j,k)\in[p]^2} \max_{(i,m,l)\in I_{n,3}} (\tilde{\chi}^{(2,2)}_{iml,jk})^2(X_i,X_m,X_l) \right\|_{L^1(\pr)} \le \E\left[ \max_{j\in[p]} \max_{(i,m)\in I_{n,2}} (\pi_{2,im}\psi_j)^4(X_i,X_m)\right]  . \label{eq:lem5-12-inid}
\end{align}
\end{lemma}
\end{lemma}

\begin{lemma}  \label{lem:lem5-4-inid}
It holds that
\begin{align}
    &  \max_{(j,k)\in[p]^2} \sum_{m=1}^n\sum_{l\neq m} \int (\tilde{\varphi}_{ml,jk}^{(2,2)})^2(x_m,x_l) dP_m(x_m)dP_l(x_l) \nonumber\\
    & \quad \le\max_{(j,k)\in[p]^2} \sum_{m=1}^n\sum_{l\neq m} \left\| \sum_{i\neq m,l} \pi_{2,im}\psi_j \star_i^1 \pi_{2,il}\psi_k\right\|_{L^2(P_m\otimes P_l)}^2,\label{eq:lem5-13-inid} \\
    &  \left\| \max_{(j,k)\in[p]^2} \max_{m\in[n]} \sum_{l\neq m} \int (\tilde{\varphi}_{ml,jk}^{(2,2)})^2(X_m,x_l)dP_l(x_l) \right\|_{L^1(\pr)} \nonumber \\
    & \quad \le \left( \max_{(j,k)\in[p]^2} \sum_{m=1}^n\sum_{l\neq m} \left\| \sum_{i\neq m,l} \pi_{2,im}\psi_j \star_i^1 \pi_{2,il}\psi_k\right\|_{L^2(P_m\otimes P_l)}^2\right)^{1/2} \left\| \max_{j\in[p]} \max_{i\in[n]} \sum_{m\neq i} P_m(\pi_{2,im}\psi_j)^2(X_i) \right\|_{L^2(\mathbb P)}, \label{eq:lem5-14-inid}\\
    & \left\| \max_{(j,k)\in[p]^2} \max_{(m,l)\in I_{n,2}} (\tilde{\varphi}_{ml,jk}^{(2,2)})^2(X_m,X_l) \right\|_{L^1(\pr)} \le  \left\| \max_{j\in[p]} \max_{i\in[n]} \sum_{m\neq i} P_m(\pi_{2,im}\psi_j)^2(X_i) \right\|_{L^2(\mathbb P)}^2. \label{eq:lem5-15-inid}
\end{align}

\begin{lemma}  \label{lem:lem5-5-inid}
It holds that
\begin{align}
    & \max_{(j,k)\in[p]^2} \sum_{m=1}^n \int (\varphi^{(2,2), \mathtt{diag}}_{m,jk})^2(x_m)dP_m(x_m) \le 4  \max_{j\in[p]} \sum_{i=1}^n \left\| \sum_{m\neq i} P_m(\pi_{2,im}\psi_j)^2(X_i)\right\|_{L^2(P_i)}^2 , \label{eq:lem5-16-inid}\\
    & \left\| \max_{(j,k)\in[p]^2} \max_{m\in[n]}  (\varphi^{(2,2), \mathtt{diag}}_{m,jk})^2(X_m)\right\|_{L^1(\pr)}  \le 4 \left\| \max_{j\in[p]} \max_{m\in[n]} \sum_{i\neq m} P_i(\pi_{2,im}\psi_j)^2(X_m)\right\|_{L^2(\pr)}^2.  \label{eq:lem5-17-inid}
\end{align}
\end{lemma}
\end{lemma}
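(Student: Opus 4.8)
The plan is to treat all three inequalities from a common reduction and to read \eqref{eq:lem5-14-inid} as the Cauchy--Schwarz interpolation of \eqref{eq:lem5-13-inid} and \eqref{eq:lem5-15-inid}. Recall that $\varphi^{(2,2)}_{ml,jk}=\sum_{i:\,i\neq m,l}h_{i,ml,jk}$, where $h_{i,ml,jk}(x_m,x_l)\coloneqq(\pi_{2,im}\psi_j\star^1_i\pi_{2,il}\psi_k)(x_m,x_l)-P_mP_l(\pi_{2,im}\psi_j\star^1_i\pi_{2,il}\psi_k)$ is the $P_m\otimes P_l$-centered version of the $\star^1_i$-product, and that $\tilde\varphi^{(2,2)}_{ml,jk}$ is its symmetrization in $(m,l)$. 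Since the symmetrization is an $L^2$-contraction by convexity, each of the three left-hand sides is, up to relabeling the ordered pair (and a constant $\tfrac12+\tfrac12=1$), controlled by the corresponding quantity built from $\varphi^{(2,2)}_{ml,jk}$. For the latter I would first apply the power-mean (Cauchy--Schwarz) inequality over the at most $n$ summands, $\big(\sum_i h_{i,ml,jk}\big)^2\le(n-2)^2\max_i h_{i,ml,jk}^2$, which extracts the factor $(n-2)^2\le n^2$ and reduces every estimate to a single index $i$. The one pointwise tool used throughout is the Cauchy--Schwarz bound in the integrated variable $x_i$, namely $(\pi_{2,im}\psi_j\star^1_i\pi_{2,il}\psi_k)^2(x_m,x_l)\le P_i(\pi_{2,im}\psi_j)^2(x_m)\,P_i(\pi_{2,il}\psi_k)^2(x_l)$, together with Jensen's inequality to dominate the centering constant.

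For \eqref{eq:lem5-13-inid} (full integration) I would simply note that $\int h_{i,ml,jk}^2\,dP_m\,dP_l=\Var_{P_m\otimes P_l}\!\big(\pi_{2,im}\psi_j\star^1_i\pi_{2,il}\psi_k\big)\le\|\pi_{2,im}\psi_j\star^1_i\pi_{2,il}\psi_k\|_{L^2(P_m\otimes P_l)}^2$; taking the maximum over $(j,k)$ and $(i,m,l)$ and combining with the $n^2$ from the power-mean step yields the claim with the stated constant. For \eqref{eq:lem5-15-inid} (no integration) I would drop the centering by Jensen and use the pointwise product bound above to get $h_{i,ml,jk}^2(X_m,X_l)\lesssim P_i(\pi_{2,im}\psi_j)^2(X_m)\,P_i(\pi_{2,il}\psi_k)^2(X_l)$. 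Because $X_m$ and $X_l$ are independent coordinates and both factors are nonnegative, I would pull the maxima apart via $\max_{(m,l),j,k,i}(fg)\le(\max_{m,j,i}f)(\max_{l,k,i}g)$ and then apply Cauchy--Schwarz in $\pr$, $\E[(\max f)(\max g)]\le\E[(\max f)^2]^{1/2}\E[(\max g)^2]^{1/2}$; the two factors coincide after relabeling $m\leftrightarrow l$, producing exactly $n^2\,\E[\max(P_i(\pi_{2,im}\psi_j)^2(X_m))^2]$.

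The crux is \eqref{eq:lem5-14-inid}, the half-integrated estimate, which is precisely the geometric mean of the two bounds just obtained: its right-hand side is $(\text{star-norm})\times\E[\max(\cdots)^2]^{1/2}$, i.e. the square root of the \eqref{eq:lem5-13-inid} bound times the square root of the \eqref{eq:lem5-15-inid} bound. The plan is to write $\int h_{i,ml,jk}^2(X_m,x_l)\,dP_l=\int|h_{i,ml,jk}|\cdot|h_{i,ml,jk}|\,dP_l$ and to split the two copies asymmetrically: one copy is integrated fully, so that taking $\E$ over $X_m$ reconstitutes the deterministic $\|\pi_{2,im}\psi_j\star^1_i\pi_{2,il}\psi_k\|_{L^2(P_m\otimes P_l)}$, while the other copy is controlled pointwise by the $x_i$-Cauchy--Schwarz factor $P_i(\pi_{2,im}\psi_k)^2(X_m)$ and retains the $X_m$-randomness; a single Cauchy--Schwarz in $\pr$ across the maximum then glues these into the stated product.

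The delicate point here, and the main obstacle, is the i.n.i.d.\ bookkeeping: the measures $P_m,P_l$ differ by index, the centering uses $P_mP_l$ rather than a common law, and the symmetrization defining $\tilde\varphi^{(2,2)}_{ml,jk}$ must be invoked to relabel the ordered pair so that the kernel $\psi_k$ (not $\psi_j$) lands in the free $X_m$-slot of the fourth-moment factor, all while commuting the maxima over $(i,m,l,j,k)$ with the Cauchy--Schwarz split. I expect the honest accounting of which slot carries which measure under the $(m,l)$-symmetrization to be where the argument is most error-prone; once this split is fixed, the remaining manipulations are the same elementary $L^2$-estimates as in \eqref{eq:lem5-13-inid}--\eqref{eq:lem5-15-inid}, and the universal constants collapse to the stated $n^2$ thanks to the slack $(n-2)^2\le n^2$.
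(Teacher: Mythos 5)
Your treatments of \eqref{eq:lem5-13-inid} and \eqref{eq:lem5-15-inid} essentially match the paper's (the paper additionally observes that the centering $P_iP_mP_l\{\pi_{2,im}\psi_j\,\pi_{2,il}\psi_k\}$ vanishes identically by degeneracy of the Hoeffding projections, which is why the stated constants carry no Jensen slack), but your plan for the half-integrated estimate \eqref{eq:lem5-14-inid} has a genuine gap. The ``asymmetric split'' $\int|h|\cdot|h|\,dP_l$ with one copy bounded pointwise via $|h(x_m,x_l)|\le (P_i(\pi_{2,im}\psi_j)^2(x_m))^{1/2}(P_i(\pi_{2,il}\psi_k)^2(x_l))^{1/2}$ cannot reconstitute the contraction norm: carrying it out (Cauchy--Schwarz in $x_l$, then in $\pr$) leads either to a circular bootstrap or to a bound of the form $\E\bigl[\max P_i(\pi_{2,im}\psi_j)^2(X_m)\bigr]\cdot\max_k\|\pi_{2,il}\psi_k\|_{L^2(P_i\otimes P_l)}^2$, in which the plain kernel norm replaces $\|\pi_{2,im}\psi_j\star^1_i\pi_{2,il}\psi_k\|_{L^2(P_m\otimes P_l)}$. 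Since $\|\pi\star^1_i\pi\|_{L^2(P_l\otimes P_l)}\le\|\pi\|_{L^2(P_i\otimes P_l)}^2$ with potentially enormous slack in the degenerate regime, this is strictly weaker and destroys exactly the $\Delta_1^{(0)}$-type cancellation the lemma exists to capture; moreover, once the maxima over $(j,k,i,m,l)$ sit inside the expectation, ``taking $\E$ over $X_m$'' cannot turn one random factor into a deterministic $L^2(P_m\otimes P_l)$-norm. The paper's actual device is a Fubini duplication: expand the square of the $P_i$-integral as a double integral over $(x_i,x_i')\sim P_i\otimes P_i$, integrate out $x_l$ \emph{first} to form the self-contraction $(\pi_{2,il}\psi_k\star^1_l\pi_{2,il}\psi_k)(x_i,x_i')$, and apply Cauchy--Schwarz over $P_i\otimes P_i$ to split off $\|\pi_{2,il}\psi_k\star^1_l\pi_{2,il}\psi_k\|_{L^2(P_i\otimes P_i)}=\|\pi_{2,il}\psi_k\star^1_i\pi_{2,il}\psi_k\|_{L^2(P_l\otimes P_l)}$ times $P_i(\pi_{2,im}\psi_j)^2(X_m)$. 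This also explains a structural detail your interpolation reading misses: the maximum on the right of \eqref{eq:lem5-14-inid} runs over $(i,m,l)\in[n]^3$ with $m,l\neq i$ (allowing $m=l$, and $j=k$ within $[p]^2$) precisely so that it covers this diagonal self-contraction; the ``geometric mean of \eqref{eq:lem5-13-inid} and \eqref{eq:lem5-15-inid}'' is a coincidence of shapes, not a proof mechanism.

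Two smaller points. First, in \eqref{eq:lem5-13-inid} your pointwise step $(\sum_i h_i)^2\le(n-2)^2\max_i h_i^2$ followed by maximizing the integrals over $i$ implicitly needs $\int\max_i\le\max_i\int$, which goes the wrong way; use instead $(\sum_i h_i)^2\le(n-2)\sum_i h_i^2$, integrate, and only then bound the sum by $(n-2)$ times the maximum of the integrals (for \eqref{eq:lem5-14-inid} and \eqref{eq:lem5-15-inid} the pointwise max is harmless because $\max_i$ stays inside $\E$). Second, the statement also contains \eqref{eq:lem5-16-inid}--\eqref{eq:lem5-17-inid} for the diagonal term $\varphi^{(2,2),\mathtt{diag}}_{m,jk}$, which your proposal does not address; these follow by the same pointwise Cauchy--Schwarz-in-$x_i$ and Jensen arguments as \eqref{eq:lem5-15-inid} (here the centering $P_iP_m\{\cdots\}$ is genuinely nonzero, whence the factor $4n^2$), so the omission is repairable but should be acknowledged.
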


\paragraph{Proof of \cref{lem:inid:eval-step2}}
\subparagraph{Proof of \cref{eq:inid:eval-step2-1}:}
\cref{thm:max-is-inid} and \cref{lem:lem5-1-inid} give
\begin{align*}
    & \E\left[ \max_{(j,k)\in[p]^2} |J_1(\boldsymbol\chi_{jk}^{(1,1)})| \right](\log p)^{3/2} \\
    & \le C \left( \log^2 (np) \left(\max_{(j,k)\in[p]^2} \sum_{i=1}^n \int (\chi^{(1,1)}_{i,jk})^2(x_i)dP_i(x_i)   \right)^{1/2} \vee \log^{5/2} (np) \left\| \max_{(j,k)\in[p]^2} \max_{i\in[n]} (\chi^{(1,1)}_{i,jk})^2(X_i) \right\|_{L^{1}(\pr)}^{1/2}  \right) \\
    & \le C  \log^{2}(np)  \sqrt{\max_{j\in[p]}\sum_{i=1}^n \|\pi_{1,i}\psi_j\|_{L^4(P_i)}^4}  + \log^{5/2} (np) \E\left[\max_{j\in[p]} \max_{i\in[n]} (\pi_{1,i}\psi_j)^4(X_i) \right]^{1/2} \\
    & \le C  \sqrt{\left(\Delta_{2,*}^{(1)}(1) +\Delta_{2,*}^{(2)}(1)\right)\log^5 (np)}.
\end{align*}

\subparagraph{Proof of \cref{eq:inid:eval-step2-2}:}
\cref{thm:max-is-inid} and \cref{lem:lem5-2-inid} give
\begin{align*}
    & \E\left[ \max_{(j,k)\in[p]^2} |J_2(\tilde{\boldsymbol\chi}_{jk}^{(1,2)})| \right] (\log p)^{3/2} \\
    & \le C \left( \log^{5/2} (np) \left( \max_{(j,k)\in[p]^2} \sum_{i=1}^n\sum_{m\neq i} \int (\tilde{\chi}_{im,jk}^{(1,2)})^2(x_i,x_m) dP_i(x_i)dP_m(x_m)  \right)^{1/2} \right. \\
    & \left. \qquad\qquad \vee \log^3 (np) \left\| \max_{(j,k)\in[p]^2} \max_{i\in[n]} \sum_{m\neq i} \int (\tilde{\chi}_{im,jk}^{(1,2)})^2(X_i,x_m)dP_m(x_m) \right\|_{L^1(\pr)}^{1/2} \right. \\
    & \left. \qquad\qquad \vee \log^{7/2} (np) \left\| \max_{(j,k)\in[p]^2} \max_{(i,m)\in I_{n,2}} (\tilde{\chi}_{im,jk}^{(1,2)})^2(X_i,X_m) \right\|_{L^1(\pr)}^{1/2} \right) \\
    & \le C \log^{5/2} (np) \left(\max_{j\in[p]}\sum_{i=1}^n \| \pi_{1,i}\psi_j(X_i) \|_{L^4{(P_i)}}^4 \right)^{1/4}\left(\max_{k\in[p]} \sum_{i=1}^n \left\|\sum_{m\neq i} P_m(|\pi_{2,im}\psi_k|^2) \right\|_{L^2(P_i)}^2\right)^{1/4} \\
    & \quad + C  \log^{3}(np) \E\left[ \max_{j\in[p]} \max_{i\in[n]} (\pi_{1,i}\psi_j)^4(X_i) \right]^{1/4} \left\| \max_{k\in[p]} \max_{i\in[n]} \sum_{m\neq i} P_m(\pi_{2,im}\psi_k)^2(X_i) \right\|_{L^2(\pr)}^{1/2} \nonumber\\
    & \quad +  C  \log^{3}(np) \left(\max_{j\in[p]} \sum_{m=1}^n \|\pi_{1,m}\psi_j\|_{L^4(P_m)}^4 \right)^{1/4} \E\left[ \max_{k\in[p]} \max_{i\in[n]} \sum_{m\neq i}  P_m(\pi_{2,mi}\psi_k)^4(X_i) \right]^{1/4} \\
    & \quad + C \log^{7/2}(np)\E\left[ \max_{j\in[p]} \max_{i\in[n]} (\pi_{1,i}\psi_j)^4(X_i) \right]^{1/4}  \left\| \max_{(j,k)\in[p]^2} \max_{(i,m)\in I_{n,2}} |\pi_{2,im}\psi_k| \right\|_{L^4(\pr)} .
\end{align*}
From AM-GM inequality, we have
\begin{align*}
    & \E\left[ \max_{(j,k)\in[p]^2} |J_2(\tilde{\boldsymbol\chi}_{jk}^{(1,2)})| \right] (\log p)^{3/2} \\
    & \le C \log^{5/2}(np) \left(  \left(\max_{j\in[p]}\sum_{i=1}^n \| \pi_{1,i}\psi_j(X_i) \|_{L^4{(P_i)}}^4 \right)^{1/2} + \left(\max_{k\in[p]} \sum_{i=1}^n \left\|\sum_{m\neq i} P_m(|\pi_{2,im}\psi_k|^2) \right\|_{L^2(P_i)}^2\right)^{1/2} \right)  \\
    & \quad + C \log^2 (np)\left(\log^{1/2} (np) \E\left[ \max_{j\in[p]} \max_{i\in[n]} (\pi_{1,i}\psi_j)^4(X_i) \right]^{1/2} \right.\\
    & \left. \qquad\qquad\qquad\qquad + \log^{3/2}(np) \left\| \max_{k\in[p]} \max_{i\in[n]} \sum_{m\neq i} P_m(\pi_{2,im}\psi_k)^2(X_i) \right\|_{L^2(\pr)} \right) \\
    & \quad + C \log^2 (np) \left( \left(\max_{j\in[p]} \sum_{m=1}^n \|\pi_{1,m}\psi_j\|_{L^4(P_m)}^4 \right)^{1/2}  +   \log^2 (np)\E\left[ \max_{k\in[p]} \max_{i\in[n]} \sum_{m\neq i}  P_m(\pi_{2,mi}\psi_k)^4(X_i) \right]^{1/2} \right)\nonumber \\
    & \quad + C \log^{5/2} (np) \left( \log^{1/2} (np)\E\left[ \max_{j\in[p]} \max_{i\in[n]} (\pi_{1,i}\psi_j)^4 \right]^{1/2} +  \log^{3/2} (np)\left\| \max_{(j,k)\in[p]^2} \max_{(i,m)\in I_{n,2}} |\pi_{2,im}\psi_k| \right\|_{L^4(\pr)}^2 \right).
\end{align*}
Then, it holds that
\begin{align}
    & \E\left[ \max_{(j,k)\in[p]^2} |J_2(\tilde{\boldsymbol\chi}_{jk}^{(1,2)})| \right] (\log p)^{3/2} \nonumber \\
    & \le C \Bigg[ \log^{5/2} (np)\left( \sqrt{\Delta_{2,*}^{(1)}(1) } + \sqrt{\Delta_{2,*}^{(2)}(2) } \right) \nonumber\\
    & \quad + \log^2 (np) \left( \sqrt{\Delta_{2,*}^{(2)}(1)} + \sqrt{\Delta_{2,*}^{(5)}(2)}  \right) +   \log^2 (np)\left( \sqrt{\Delta_{2,*}^{(1)}(1)} + \sqrt{\Delta_{2,*}^{(3)}(2)} \right) \nonumber\\
    & \quad + \log^{5/2}(np) \left( \sqrt{\Delta_{2,*}^{(2)}(1)} + \sqrt{\Delta_{2,*}^{(4)}(2)}  \right) \Bigg]\nonumber \\
    & \le C \sqrt{(\Delta_{2,*}(1) + \Delta_{2,*}(2)) \log^5 (np)} . \label{eq:lem5-delta12-inid-1}
\end{align}

Also, \cref{thm:max-is-inid} and \cref{lem:lem5-add1-inid} give
\begin{align}
    & \E\left[ \max_{(j,k)\in[p]^2} | J_1(\boldsymbol\varphi_{jk}^{(1,2)})| \right] (\log p)^{3/2}  \nonumber \\
    & \le C\left( \log^2(np) \left(\max_{(j,k)\in[p]^2} \sum_{i=1}^n \int (\varphi^{(1,2)}_{i,jk})^2(x_i)dP_i(x_i) \right)^{1/2} \vee \log^{5/2}(np)\left\| \max_{(j,k)\in[p]^2} \max_{i\in[n]}  (\varphi^{(1,2)}_{i,jk})^2(X_i)\right\|_{L^1(\pr)}^{1/2}\right) \nonumber\\
    & \le C \Bigg(  \log^2 (np) \left( \max_{(j,k)\in[p]^2} \sum_{i=1}^n \left\|\sum_{m:m\neq i}\pi_{1,i}\psi_j \star_i^1 \pi_{2,im}\psi_k \right\|_{L^2(P_i)}^2\right)^{1/2} \nonumber\\
    & \qquad\qquad +  \log^{5/2}(np)\left(\max_{j\in[p]} \sum_{i=1}^n \|\pi_{1,i}\psi_j\|_{L^2(P_i)}^2 \right)^{1/2} \left\|  \max_{j\in[p]}\max_{i\in[n]} \sum_{m\neq i}P_m(\pi_{2,im}\psi_j)^2(X_i) \right\|_{L^1(\pr)}^{1/2}\Bigg) \nonumber\\
    & \le C \Bigg( \sqrt{\Delta_1^{(1)}} \log^2 (np) +  \left(\max_{j\in[p]} \sum_{i=1}^n \|\pi_{1,i}\psi_j\|_{L^2(P_i)}^2 \right)^{1/2} \left( \Delta_{2,*}^{(5)}(2) \log^7 (np) \right)^{1/4} \Bigg). \label{eq:lem5-delta12-inid-2}
\end{align}

From \cref{eq:lem5-delta12-inid-1} and \cref{eq:lem5-delta12-inid-2}, we have \cref{eq:inid:eval-step2-2}.

\subparagraph{Proof of \cref{eq:inid:eval-step2-3}:}
From \cref{thm:max-is-inid} and  \cref{lem:lem5-3-inid-1}, 
\begin{align}
    & \E\left[ \max_{(j,k)\in[p]^2} |J_2(\tilde{\boldsymbol\chi}_{jk}^{(2,2),\mathtt{diag}})| \right](\log p)^{3/2} \nonumber\\
    & \le C \left(  \log^{5/2} (np) \left( \max_{(j,k)\in[p]^2} \sum_{i=1}^n\sum_{m\neq i} \int (\tilde{\chi}_{im,jk}^{(2,2),\mathtt{diag}})^2(x_i,x_m) dP_i(x_i)dP_m(x_m)  \right)^{1/2} \right. \nonumber\\
    & \left. \qquad\qquad \vee \log^{3} (np) \left\| \max_{(j,k)\in[p]^2} \max_{i\in[n]}\sum_{m\neq i} \int (\tilde{\chi}_{im,jk}^{(2,2),\mathtt{diag}})^2(X_i,x_m)dP_m(x_m) \right\|_{L^1(\pr)}^{1/2} \right. \nonumber\\
    & \left. \qquad\qquad \vee \log^{7/2} (np) \left\| \max_{(j,k)\in[p]^2} \max_{(i,m)\in I_{n,2}} (\tilde{\chi}_{im,jk}^{(2,2),\mathtt{diag}})^2(X_i,X_m) \right\|_{L^1(\pr)}^{1/2} \right) \nonumber\\
    & \le C\log^{5/2} (np) \left( \max_{j\in[p]}\sum_{i=1}^n \sum_{m\neq i} \|\pi_{2,im} \psi_j\|_{L^4(P_i\otimes P_m)}^4 \right)^{1/2} \nonumber\\
    & \quad + C \log^3(np)\E\left[ \max_{j\in[p]}  \max_{i\in[n]} \sum_{m\neq i} P_m(\pi_{2,im}\psi_j)^4(X_i) \right]^{1/2} \nonumber \\
    & \quad + C\log^{7/2} (np) \left\| \max_{j\in[p]} \max_{(i,m)\in I_{n,2}} |\pi_{2,im}\psi_j| \right\|_{L^4(\mathbb P)}^2  \nonumber\\
    & \le C \sqrt{\Delta_{2,*}(2) \log^{5} (np)}. \label{eq:lem5-delta22-inid-1}
\end{align}
From \cref{thm:max-is-inid} and  \cref{lem:lem5-3-inid-2},
\begin{align*}
    & \E\left[ \max_{(j,k)\in[p]^2} |J_3(\tilde{\boldsymbol\chi}_{jk}^{(2,2)})| \right] (\log p)^{3/2} \\
    & \le C \left(  \log^3 (np) \left( \max_{(j,k)\in[p]^2} \sum_{i=1}^n\sum_{m\neq i}\sum_{l\neq i,m} \int (\tilde{\chi}^{(2,2)}_{iml,jk})^2(x_i,x_m,x_l) dP_i(x_i)dP_m(x_m)dP_l(x_l)  \right)^{1/2} \right. \\
    & \left. \qquad\qquad \vee \log^{7/2} (np) \left\| \max_{(j,k)\in[p]^2} \max_{i\in[n]}\sum_{m\neq i}\sum_{l\neq i,m} \int (\tilde{\chi}^{(2,2)}_{iml,jk})^2(X_i,x_m,x_l)dP_m(x_m)dP_l(x_l) \right\|_{L^1(\pr)}^{1/2} \right. \\
    & \left. \qquad\qquad \vee  \log^4 (np) \left\| \max_{(j,k)\in[p]^2} \max_{(i,m)\in I_{n,2}}\sum_{l\neq i,m} \int (\tilde{\chi}^{(2,2)}_{iml,jk})^2(X_i,X_m,x_l)dP_l(x_l)\right\|_{L^1(\pr)}^{1/2} \right. \\
    & \left. \qquad\qquad \vee \log^{9/2}(np) \left\| \max_{(j,k)\in[p]^2} \max_{(i,m,l)\in I_{n,3}} (\tilde{\chi}^{(2,2)}_{iml,jk})^2(X_i,X_m,X_l) \right\|_{L^1(\pr)}^{1/2} \right) \\
    & \le C  \log^{3} (np)\left(  \max_{j\in[p]} \sum_{i=1}^n \left\| \sum_{m\neq i} P_m(\pi_{2,im}\psi_j)^2 \right\|_{L^2(P_i)}^2 \right)^{1/2} \\
    & \quad + C\log^{7/2} (np) \Bigg(\left\| \max_{j\in[p]} \max_{i\in[n]} \sum_{m\neq i} P_m(\pi_{2,im}\psi_j)^2(X_i) \right\|_{L^2(\mathbb P )}  \\
    & \qquad +  \mathbb{E}\left[ \max_{j\in[p]} \max_{i\in[n]}  \sum_{m\neq i} P_m (\pi_{2,im}\psi_j)^4(X_i)\right]^{1/4} \left(\max_{j\in[p]} \sum_{i=1}^n \left\| \sum_{m\neq i} P_m(\pi_{2,im}\psi_j)^2 (X_i)\right\|_{L^2(P_i)}^2 \right)^{1/4} \Bigg) \\
    & \quad + C \log^{4} (np) \Bigg( \left\| \max_{j\in[p]} \max_{(i,m)\in I_{n,2}} |\pi_{2,im}\psi_j| \right\|_{L^4(\mathbb P)} \left\| \max_{j\in[p]} \max_{i\in[n]} \sum_{m\neq n} P_m(\pi_{2,im}\psi_j)^2(X_i) \right\|_{L^2(\mathbb P)}^{1/2} \nonumber\\
    & \qquad +  \mathbb{E}\left[  \max_{j\in[p]} \max_{i\in[n]} \sum_{m\neq i} P_m(\pi_{2,im}\psi_j)^4(X_i)\right]^{1/4} \left( \max_{j\in[p]} \sum_{i=1}^n \left\| \sum_{m\neq i } P_m(\pi_{2,im}\psi_j)^2(X_i)\right\|_{L^2(P_i)}^2 \right)^{1/4} \nonumber\\
    & \qquad +   \E\left[ \max_{j\in[p]}  \max_{i\in [n]}   \sum_{m\neq i} P_m(\pi_{2,im}\psi_j)^4(X_i)\right]^{1/2}  \Bigg) \\
    & \quad + C \log^{9/2}(np) \E\left[ \max_{j\in[p]} \max_{(i,m)\in I_{n,2}} (\pi_{2,im}\psi_j)^4(X_i,X_m) \right]^{1/2} .
\end{align*}
From AM-GM inequality, we have
\begin{align}
    & \E\left[ \max_{(j,k)\in[p]^2} |J_3(\tilde{\boldsymbol\chi}_{jk}^{(2,2)})| \right] (\log p)^{3/2} \nonumber\\
    & \le C \log^{3} (np)\left(  \max_{j\in[p]} \sum_{i=1}^n \left\| \sum_{m\neq i} P_m(\pi_{2,im}\psi_j)^2 \right\|_{L^2(P_i)}^2 \right)^{1/2}\nonumber \\
    & \quad + C\log^{7/2} (np)\Bigg( \left\| \max_{j\in[p]} \max_{i\in[n]} \sum_{m\neq i} P_m(\pi_{2,im}\psi_j)^2(X_i) \right\|_{L^2(\mathbb P )}\nonumber \\
    &\qquad +   \mathbb{E}\left[ \max_{j\in[p]} \max_{i\in[n]}  \sum_{m\neq i} P_m (\pi_{2,im}\psi_j)^4(X_i)\right]^{1/2}  +  \left(\max_{j\in[p]} \sum_{i=1}^n \left\| \sum_{m\neq i} P_m(\pi_{2,im}\psi_j)^2 (X_i)\right\|_{L^2(P_i)}^2 \right)^{1/2} \Bigg)   \nonumber\\
    & \quad +  C\log^{9/4} (np)\Bigg( \log^{3/2}(np)  \left\| \max_{j\in[p]} \max_{(i,m)\in I_{n,2}} |\pi_{2,im}\psi_j| \right\|_{L^4(\mathbb P)}\\
    & \qquad\qquad\qquad +  \log (np)  \left\| \max_{j\in[p]} \max_{i\in[n]} \sum_{m\neq n} P_m(\pi_{2,im}\psi_j)^2(X_i) \right\|_{L^2(\mathbb P)}\Bigg) \nonumber \\
    & \quad + C \log^{9/4} (np) \Bigg( \log^{5/2} (np) \mathbb{E}\left[  \max_{j\in[p]} \max_{i\in[n]} \sum_{m\neq i} P_m(\pi_{2,im}\psi_j)^4(X_i)\right]^{1/2}\nonumber \\
    & \qquad\qquad\qquad + \log (np) \left( \max_{j\in[p]} \sum_{i=1}^n \left\| \sum_{m\neq i } P_m(\pi_{2,im}\psi_j)^2(X_i)\right\|_{L^2(P_i)}^2 \right)^{1/2}\Bigg) \nonumber\\
    & \qquad + C \log^{4} (np) \E\left[ \max_{j\in[p]}  \max_{i\in [n]}   \sum_{m\neq i} P_m(\pi_{2,im}\psi_j)^4(X_i)\right]^{1/2} \nonumber\\
    & \quad + C \log^{9/2}(np) \E\left[ \max_{j\in[p]} \max_{(i,m)\in I_{n,2}} (\pi_{2,im}\psi_j)^4(X_i,X_m) \right]^{1/2} \nonumber \\
    & \le C  \sqrt{\Delta_{2,*}(2) \log^5 (np)} . \label{eq:lem5-delta22-inid-2}
\end{align}

Also, from \cref{thm:max-is-inid} and \cref{lem:lem5-4-inid}, we have
\begin{align*}
    & \E\left[ \max_{(j,k)\in[p]^2} |J_2(\tilde{\boldsymbol\varphi}^{(2,2)}_{jk})| \right](\log p)^{3/2} \\
    & \le C \left(   \log^{5/2} (np) \left( \max_{(j,k)\in[p]^2} \sum_{i=1}^n\sum_{m\neq i} \int (\tilde{\varphi}_{ml,jk}^{(2,2)})^2(x_m,x_l) dP_m(x_m)dP_l(x_l)  \right)^{1/2} \right. \\
    & \left. \qquad\qquad \vee \log^{3} (np) \left\| \max_{(j,k)\in[p]^2} \max_{m\in[n]}\sum_{l\neq m} \int (\tilde{\varphi}_{ml,jk}^{(2,2)})^2(X_m,x_l)dP_l(x_l) \right\|_{L^1(\pr)}^{1/2} \right. \\
    & \left. \qquad\qquad \vee \log^{7/2} (np) \left\| \max_{(j,k)\in[p]^2} \max_{(m,l)\in I_{n,2}} (\tilde{\varphi}_{ml,jk}^{(2,2)})^2(X_m,X_l) \right\|_{L^1(\pr)}^{1/2} \right) \\
    & \le C \Bigg(   \log^{5/2} (np) \left(\max_{(j,k)\in[p]^2} \sum_{m=1}^n\sum_{l\neq m} \left\| \sum_{i\neq m,l} \pi_{2,im}\psi_j \star_i^1 \pi_{2,il}\psi_k\right\|_{L^2(P_m\otimes P_l)}^2\right)^{1/2}\\
    & \quad  +  \log^3 (np) \left( \max_{(j,k)\in[p]^2} \sum_{m=1}^n\sum_{l\neq m} \left\| \sum_{i\neq m,l} \pi_{2,im}\psi_j \star_i^1 \pi_{2,il}\psi_k\right\|_{L^2(P_m\otimes P_l)}^2\right)^{1/4} \left\| \max_{j\in[p]} \max_{i\in[n]} \sum_{m\neq i} P_m(\pi_{2,im}\psi_j)^2(X_i) \right\|_{L^2(\mathbb P)}^{1/2}  \\
    & \quad  + \log^{7/2}(np) \left\| \max_{j\in[p]} \max_{i\in[n]} \sum_{m\neq i} P_m(\pi_{2,im}\psi_j)^2(X_i) \right\|_{L^2(\mathbb P)}\Bigg).
\end{align*}
From AM-GM inequality, we have
\begin{align}
    & \E\left[ \max_{(j,k)\in[p]^2} |J_2(\tilde{\boldsymbol\varphi}^{(2,2)}_{jk})| \right](\log p)^{3/2} \nonumber\\
    & \le C \Bigg(   \log^{5/2} (np) \left(\max_{(j,k)\in[p]^2} \sum_{m=1}^n\sum_{l\neq m} \left\| \sum_{i\neq m,l} \pi_{2,im}\psi_j \star_i^1 \pi_{2,il}\psi_k\right\|_{L^2(P_m\otimes P_l)}^2\right)^{1/2}\nonumber\\
    & \quad  +  \log^3 (np) \left( \max_{(j,k)\in[p]^2} \sum_{m=1}^n\sum_{l\neq m} \left\| \sum_{i\neq m,l} \pi_{2,im}\psi_j \star_i^1 \pi_{2,il}\psi_k\right\|_{L^2(P_m\otimes P_l)}^2\right)^{1/2} \nonumber\\
    & \quad + \log^3 (np)\left\| \max_{j\in[p]} \max_{i\in[n]} \sum_{m\neq i} P_m(\pi_{2,im}\psi_j)^2(X_i) \right\|_{L^2(\mathbb P)}  \nonumber\\
    & \qquad  + \log^{7/2}(np) \left\| \max_{j\in[p]} \max_{i\in[n]} \sum_{m\neq i} P_m(\pi_{2,im}\psi_j)^2(X_i) \right\|_{L^2(\mathbb P)}\Bigg) \nonumber\\
    & \le C \Bigg( \sqrt{\Delta_1^{(0)}} \log^{3} (np)   + \sqrt{\Delta_{2,*}(2) \log^5 (np) } \Bigg). \label{eq:lem5-delta22-inid-3}
\end{align}

Also, from \cref{thm:max-is-inid} and \cref{lem:lem5-5-inid}, we have
\begin{align}
    & \E\left[ \max_{(j,k)\in[p]^2} |J_1(\boldsymbol\varphi^{(2,2), \mathtt{diag}}_{jk})| \right](\log p)^{3/2} \nonumber\\
    & \le C\Bigg( \log^2(np) \left(\max_{(j,k)\in[p]^2}\sum_{m=1}^n \int (\varphi^{(2,2), \mathtt{diag}}_{m,jk})^2(x_m)dP_m(x_m) \right)^{1/2} \nonumber\\
    & \qquad\qquad \vee \log^{5/2}(np)\left\| \max_{(j,k)\in[p]^2} \max_{m\in[n]}  (\varphi^{(2,2), \mathtt{diag}}_{m,jk})^2(X_m)\right\|_{L^1(\pr)}^{1/2}\Bigg) \nonumber \\
    & \le C \Bigg( \log^2 (np) \left(\max_{j\in[p]} \sum_{i=1}^n \left\| \sum_{m\neq i} P_m(\pi_{2,im}\psi_j)^2(X_i)\right\|_{L^2(P_i)}^2 \right)^{1/2} \\
    & \qquad\qquad + \log^{5/2}(np) \left\| \max_{j\in[p]} \max_{m\in[n]} \sum_{i\neq m} P_i(\pi_{2,im}\psi_j)^2(X_m)\right\|_{L^2(\pr)}\Bigg) \nonumber \\
    & \le  C \sqrt{\Delta_{2,*}(2) \log^5 (np)}. \label{eq:lem5-delta22-inid-4}
\end{align}

From \cref{eq:lem5-delta22-inid-1},\cref{eq:lem5-delta22-inid-2}, \cref{eq:lem5-delta22-inid-3} and \cref{eq:lem5-delta22-inid-4}, we have \cref{eq:inid:eval-step2-3}.

\subsubsection{Proof of Lemma \ref{lem:lem5-1-inid}-\ref{lem:lem5-5-inid}} \label{subsec:proof:lemlem5-inid}

\begin{proof}[Proof of \cref{eq:lem5-1-inid}]
Recall that
\begin{align}
    (\chi_{i,jk}^{(1,1)})^2(X_i) = \left(\pi_{1,i}\psi_j(X_i)\pi_{1,i}\psi_k(X_i) - P_i \{\pi_{1,i}\psi_j(X_i)\pi_{1,i}\psi_k(X_i)\} \right)^2. \label{eq:chi11-cr}
\end{align}
then, from Jensen's inequality, we have
\begin{align*}
    &\max_{(j,k)\in[p]^2} \sum_{i=1}^n \int (\chi^{(1,1)}_{i,jk})^2(x_i)dP_i(x_i) \\
    & \le \max_{(j,k)\in[p]^2}\sum_{i=1}^n \Big( P_i\{(\pi_{1,i}\psi_j)^2(X_i)(\pi_{1,i}\psi_k)^2(X_i)\}\Big) \\
    & \le  \max_{(j,k)\in[p]^2} \sum_{i=1}^n \left(  P_i\{(\pi_{1,i}\psi_j)^4(X_i) \}\right)^{1/2} \left(  P_i\{(\pi_{1,i}\psi_k)^4(X_i) \}\right)^{1/2} \\
    & \le \max_{j\in[p]} \sum_{i=1}^n \left(  P_i\{(\pi_{1,i}\psi_j)^4(X_i) \}\right) = \max_{j\in[p]} \sum_{i=1}^n \|\pi_{1,j}\psi_j\|_{L^4(P_i)}^4,
\end{align*}
where the first inequality follows from Jensen's inequality and the second inequality follows from Cauchy-Schwarz inequality.
\end{proof}

\begin{proof}[Proof of \cref{eq:lem5-2-inid}]
From \cref{eq:chi11-cr} and Jensen inequality,
\begin{align*}
    & \left\| \max_{(j,k)\in[p]^2} \max_{i\in[n]} (\chi^{(1,1)}_{i,jk})^2(X_i) \right\|_{L^{1}(\pr)} \\
    & \le \left\| \max_{(j,k)\in[p]^2}  \max_{i\in[n]} \{(\pi_{1,i}\psi_j)^2(X_i)(\pi_{1,i}\psi_k)^2(X_i)\} \right\|_{L^1(\pr)} .
\end{align*}
Since,
\begin{align*}
    \max_{(j,k)\in[p]^2} \max_{i\in[n]} (\pi_{1,i}\psi_j)^2(X_i)(\pi_{1,i}\psi_k)^2(X_i) =   \left( \max_{j\in[p]} \max_{i\in[n]} (\pi_{1,i}\psi_j)^2(X_i) \right)^2 = \max_{j\in[p]} \max_{i\in[n]} (\pi_{1,i}\psi_j)^4(X_i),
\end{align*}
we have
\begin{align*}
    & \left\| \max_{(j,k)\in[p]^2} \max_{i\in[n]} (\chi^{(1,1)}_{i,jk})^2(X_i) \right\|_{L^{1}(\pr)}  \le  \E\left[\max_{j\in[p]} \max_{i\in[n]} (\pi_{1,i}\psi_j)^4(X_i) \right]
\end{align*}
\end{proof}

\begin{proof}[Proof of \cref{eq:lem5-3-inid}]
Observe that $c_r$ inequality gives
\begin{align}\label{eq:chi12-cr}
\begin{split}
    & (\bar\chi^{(1,2)}_{im,jk})^2(X_i,X_m) \le 2(\pi_{1,i}\psi_j)^2(X_i)(\pi_{2,im}\psi_k)^2(X_i,X_m) + 2\big(P_i\{ \pi_{1,i}\psi_j(X_i)\pi_{2,im}\psi_k(X_i,X_m)\}\big)^2 \\
    & (\bar\chi^{(1,2)}_{mi,jk})^2(X_m,X_i) \le 2(\pi_{1,m}\psi_j)^2(X_m)(\pi_{2,mi}\psi_k)^2(X_m,X_i) + 2 \big(P_m\{ \pi_{1,m}\psi_j(X_m)\pi_{2,mi}\psi_k(X_m,X_i)\}\big)^2.
\end{split}
\end{align}
From Eq.(5) in \cite{imai2025gaussian}, \cref{eq:chi12-cr} and Jensen's inequality, we can see that
\begin{align*}
    & \max_{(j,k)\in[p]^2} \sum_{i=1}^n\sum_{m\neq i} \int (\tilde{\chi}_{im,jk}^{(1,2)})^2(x_i,x_m) dP_i(x_i)dP_m(x_m) \\
    & \le \max_{(j,k)\in[p]^2} \sum_{i=1}^n\sum_{m\neq i} \int (\bar\chi_{im,jk}^{(1,2)})^2(x_i,x_m) dP_i(x_i)dP_m(x_m) \\
    & \le 2\max_{(j,k)\in[p]^2} \sum_{i=1}^n\sum_{m\neq i} \Big( P_iP_m\{(\pi_{1,i}\psi_j)^2(X_i)(\pi_{2,im}\psi_k)^2(X_i,X_m)\} \Big) \\
    & \quad +  2\max_{(j,k)\in[p]^2}\sum_{i=1}^n\sum_{m\neq i} \int \left(  \int \pi_{1,i}\psi_j(x_i) \pi_{2,im}\psi_k(x_i,x_m) dP_i(x_i) \right)^2 dP_m(x_m)\\
    & \le 4\max_{(j,k)\in[p]^2} \sum_{i=1}^n\sum_{m\neq i} \Big( P_iP_m\{(\pi_{1,i}\psi_j)^2(X_i)(\pi_{2,im}\psi_k)^2(X_i,X_m)\} \Big) \\
    & =  4\max_{(j,k)\in[p]^2}\sum_{i=1}^n\sum_{m\neq i}\Big( P_i\{ (\pi_{1,i}\psi_j)^2(X_i) P_m(\pi_{2,im}\psi_k)^2(X_i,X_m)\} \Big) .
\end{align*}
Schwarz inequality gives
\begin{align*}
    & 4\max_{(j,k)\in[p]^2}\sum_{i=1}^n\sum_{m\neq i}\Big( P_i\{ (\pi_{1,i}\psi_j)^2(X_i) P_m(\pi_{2,im}\psi_k)^2(X_i,X_m)\} \Big) \\
    & \le 4\left( \max_{j\in[p]} \sum_{i=1}^n P_i(\pi_{1,i}\psi_{j})^4(X_i)  \right)^{1/2} \left( \max_{k\in[p]} \sum_{i=1}^n  P_i\left\{\sum_{m\neq i} P_m (\pi_{2,im}\psi_k)^2(X_i)\right\}^2 \right)^{1/2} \\
    & \le 4\left(\max_{j\in[p]}\sum_{i=1}^n \| \pi_{1,i}\psi_j(X_i) \|_{L^4{(P_i)}}^4 \right)^{1/2}\left(\max_{k\in[p]} \sum_{i=1}^n \left\|\sum_{m\neq i} P_m(|\pi_{2,im}\psi_k|^2) \right\|_{L^2(P_i)}^2\right)^{1/2}
\end{align*}
Summing up, 
\begin{align*}
    & \max_{(j,k)\in[p]^2} \sum_{i=1}^n\sum_{m\neq i} \int (\tilde{\chi}_{im,jk}^{(1,2)})^2(x_i,x_m) dP_i(x_i)dP_m(x_m)\\  
    & \le 4\left(\max_{j\in[p]}\sum_{i=1}^n \| \pi_{1,i}\psi_j(X_i) \|_{L^4{(P_i)}}^4 \right)^{1/2}\left(\max_{k\in[p]} \sum_{i=1}^n \left\|\sum_{m\neq i} P_m(|\pi_{2,im}\psi_k|^2) \right\|_{L^2(P_i)}^2\right)^{1/2}  .
\end{align*}
\end{proof}

\begin{proof}[Proof of \cref{eq:lem5-4-inid}]
From \cref{eq:chi12-cr}, $c_r$ inequality and Jensen's inequality,  we can see that
\begin{align*}
    & \left\| \max_{(j,k)\in[p]^2} \max_{i\in[n]} \sum_{m\neq i} \int (\tilde{\chi}_{im,jk}^{(1,2)})^2(X_i,x_m)dP_m(x_m) \right\|_{L^1(\pr)} \\
    & \le \frac{1}{4}\left\| \max_{(j,k)\in[p]^2}  \max_{i\in[n]} \sum_{m\neq i}  \int \{({\bar\chi}_{im,jk}^{(1,2)})^2(X_i,x_m) + ({\chi}_{mi,jk}^{(1,2)})^2(x_m,X_i)\}dP_m(x_m) \right\|_{L^1(\pr)}\\
    & \le  \left\| \max_{(j,k)\in[p]^2}  \max_{i\in[n]} \sum_{m\neq i}  \int (\pi_{1,i}\psi_j)^2(X_i)(\pi_{2,im}\psi_k)^2(X_i,x_m) dP_m(x_m)  \right\|_{L^1(\pr)} \\
    & \quad +  \max_{(j,k)\in[p]^2} \max_{i\in[n]} \sum_{m\neq i}  \int (P_i\{\pi_{1,i}\psi_j(X_i)\pi_{2,im}\psi_k(X_i,x_m)\})^2 dP_m(x_m) \\
    & \quad +  \left\| \max_{(j,k)\in[p]^2}  \max_{i\in[n]} \sum_{m\neq i}  \int (\pi_{1,m}\psi_j)^2(x_m)(\pi_{2,mi}\psi_k)^2(x_m,X_i) dP_m(x_m)  \right\|_{L^1(\pr)} \\
    & \quad + \left\| \max_{(j,k)\in[p]^2}  \max_{i\in[n]} \sum_{m\neq i}  (P_m\{\pi_{1,m}\psi_j(X_m)\pi_{2,mi}\psi_k(X_m,X_i)\})^2 \right\|_{L^1(\pr)} \\
    & \le  \left\| \max_{(j,k)\in[p]^2}  \max_{i\in[n]} \sum_{m\neq i}  \int (\pi_{1,i}\psi_j)^2(X_i)(\pi_{2,im}\psi_k)^2(X_i,x_m) dP_m(x_m)  \right\|_{L^1(\pr)}  \\
    & \quad +  \left\| \max_{(j,k)\in[p]^2}  \max_{i\in[n]} \sum_{m\neq i}  \int (\pi_{1,m}\psi_j)^2(x_m)(\pi_{2,mi}\psi_k)^2(x_m,X_i) dP_m(x_m)  \right\|_{L^1(\pr)} .
\end{align*}
In terms of the first term, 
\begin{align*}
    & \left\| \max_{(j,k)\in[p]^2}  \max_{i\in[n]} \sum_{m\neq i} \int (\pi_{1,i}\psi_j)^2(X_i)(\pi_{2,im}\psi_k)^2(X_i,x_m) dP_m(x_m)  \right\|_{L^1(\pr)} \\
    & \le \left\| \max_{j\in[p]} \max_{i\in[n]} (\pi_{1,i}\psi_j)^2(X_i)   \max_{k\in[p]}  \max_{i\in[n]} \sum_{m\neq i} \int (\pi_{2,im}\psi_k)^2(X_i,x_m) dP_m(x_m)  \right\|_{L^1(\pr)} \\
    & \le \left\| \max_{j\in[p]} \max_{i\in[n]} (\pi_{1,i}\psi_j)^4(X_i)\right\|_{L^1(\pr)}^{1/2} \left\| \max_{k\in[p]} \max_{i\in[n]} \left( \sum_{m\neq i} P_m(\pi_{2,im}\psi_k)^2(X_i) \right)^2 \right\|_{L^1(\pr)}^{1/2} \\
    & = \E\left[ \max_{j\in[p]} \max_{i\in[n]} (\pi_{1,i}\psi_j)^4(X_i) \right]^{1/2} \left\| \max_{k\in[p]} \max_{i\in[n]} \sum_{m\neq i} P_m(\pi_{2,im}\psi_k)^2(X_i) \right\|_{L^2(\pr)}.
\end{align*}
In terms of the second term, form Schwarz inequality and Jensen inequality, we have
\begin{align*}
    & \left\| \max_{(j,k)\in[p]^2}  \max_{i\in[n]} \sum_{m\neq i}   \int (\pi_{1,m}\psi_j)^2(x_m)(\pi_{2,mi}\psi_k)^2(x_m,X_i) dP_m(x_m)  \right\|_{L^1(\pr)} \\
    & \le   \left(  \max_{j\in[p]}\sum_{m=1}^n\int (\pi_{1,m}\psi_j)^4(x_m) dP_m(x_m)\right)^{1/2} \left\| \max_{k\in[p]}\max_{i\in[n]}  \left(\sum_{m\neq i}\int (\pi_{2,mi}\psi_k)^4(x_m,X_i) dP_m(x_m)\right)^{1/2}  \right\|_{L^1(\pr)} \\
    & \le \left(\max_{j\in[p]} \sum_{m=1}^n \|\pi_{1,m}\psi_j\|_{L^4(P_m)}^4 \right)^{1/2} \E\left[ \max_{k\in[p]} \max_{i\in[n]} \sum_{m\neq i}  P_m(\pi_{2,mi}\psi_k)^4(X_i) \right]^{1/2}. 
\end{align*}
Summing up
\begin{align*}
    & \left\| \max_{(j,k)\in[p]^2} \max_{(i,m)\in I_{n,2}} \int (\tilde{\chi}_{im,jk}^{(1,2)})^2(X_i,x_m)dP_m(x_m) \right\|_{L^1(\pr)} \\
    & \le \E\left[ \max_{j\in[p]} \max_{i\in[n]} (\pi_{1,i}\psi_j)^4(X_i) \right]^{1/2} \left\| \max_{k\in[p]} \max_{i\in[n]} \sum_{m\neq i} P_m(\pi_{2,im}\psi_k)^2(X_i) \right\|_{L^2(\pr)} \\
    & \quad +  \left(\max_{j\in[p]} \sum_{m=1}^n \|\pi_{1,m}\psi_j\|_{L^4(P_m)}^4 \right)^{1/2} \E\left[ \max_{k\in[p]} \max_{i\in[n]} \sum_{m\neq i}  P_m(\pi_{2,mi}\psi_k)^4(X_i) \right]^{1/2}.
\end{align*}
\end{proof}

\begin{proof}[Proof of \cref{eq:lem5-5-inid}]
From \cref{eq:chi12-cr}, we have
\begin{align*}
    & \left\| \max_{(j,k)\in[p]^2} \max_{(i,m)\in I_{n,2}} (\tilde{\chi}_{im,jk}^{(1,2)})^2(X_i,X_m) \right\|_{L^1(\pr)} \\
    & \le \left\|  \max_{(j,k)\in[p]^2} \max_{(i,m)\in I_{n,2}} \{(\pi_{1,i}\psi_j)^2(X_i)(\pi_{2,im}\psi_k)^2(X_i,X_m)\}  \right\|_{L^1(\pr)} \\
    & \quad + \left\| \max_{(j,k)\in[p]^2} \max_{(i,m)\in I_{n,2}} ( P_i\{ \pi_{1,i}\psi_j(X_i)\pi_{2,im}\psi_k(X_i,X_m)\} )^2\right\|_{L^1(\pr)} \\
    & \le 3 \left\|  \max_{(j,k)\in[p]^2} \max_{(i,m)\in I_{n,2}} \{(\pi_{1,i}\psi_j)^2(X_i)(\pi_{2,im}\psi_k)^2(X_i,X_m)\}  \right\|_{L^1(\pr)},
\end{align*}
where the final inequality follows from Jensen's inequality and \cref{lem:max-Jensen_inid}.
Then, Schwarz inequality gives
\begin{align*}
    & \left\|  \max_{(j,k)\in[p]^2} \max_{(i,m)\in I_{n,2}} \{(\pi_{1,i}\psi_j)^2(X_i)(\pi_{2,im}\psi_k)^2(X_i,X_m)\}  \right\|_{L^1(\pr)} \\
    & \le  \left\|  \max_{j\in[p]} \max_{i\in[n]} (\pi_{1,i}\psi_j)^2(X_i)  \max_{(j,k)\in[p]^2} \max_{(i,m)\in I_{n,2}} (\pi_{2,im}\psi_k)^2(X_i,X_m)  \right\|_{L^1(\pr)} \\
    & \le \left\| \max_{j\in[p]} \max_{i\in[n]} (\pi_{1,i}\psi_j)^4(X_i) \right\|^{1/2}_{L^1(\pr)} \left\| \max_{(j,k)\in[p]^2} \max_{(i,m)\in I_{n,2}} (\pi_{2,im}\psi_k)^4(X_i,X_m) \right\|^{1/2}_{L^1(\pr)} \\
    & = \E\left[ \max_{j\in[p]} \max_{i\in[n]} (\pi_{1,i}\psi_j)^4(X_i) \right]^{1/2}  \left\| \max_{(j,k)\in[p]^2} \max_{(i,m)\in I_{n,2}} |\pi_{2,im}\psi_k| \right\|^{2}_{L^4(\pr)}.
\end{align*}
\end{proof}

\begin{proof}[Proof of \cref{eq:lem5-add1-inid}]
Since $\varphi^{(1,2)}_{m,jk}(X_m) \coloneqq \sum_{i:m\neq i}2P_i\{ \pi_{1,i}\psi_j(X_i)\pi_{2,im}\psi_k(X_i,X_m)\}$, we have
\begin{align*}
    &\max_{(j,k)\in[p]^2} \sum_{m=1}^n \int (\varphi^{(1,2)}_{m,jk})^2(x_m)dP_m(x_m) \\
    & \le 4  \max_{(j,k)\in[p]^2} \sum_{m=1}^n \int  \left( \sum_{i:m\neq i} P_i\{ \pi_{1,i}\psi_j(X_i)\pi_{2,im}\psi_k(X_i,x_m)\} \right)^2 dP_m(x_m) \\
    & = 4  \max_{(j,k)\in[p]^2} \sum_{m=1}^n \int \left( \sum_{i:m\neq i} \pi_{1,i}\psi_j \star_i^1 \pi_{2,im}\psi_k\right)^2(x_m)dP_m(x_m) \\
    & = 4  \max_{(j,k)\in[p]^2} \sum_{m=1}^n \left\|\sum_{i:m\neq i}\pi_{1,i}\psi_j \star_i^1 \pi_{2,im}\psi_k \right\|_{L^2(P_m)}^2.
\end{align*}
\end{proof}

\begin{proof}[Proof of \cref{eq:lem5-add2-inid}]
\begin{align*}
    & \left\| \max_{(j,k)\in[p]^2} \max_{m\in[n]} (\varphi^{(1,2)}_{m,jk})^2(X_m) \right\|_{L^{1}(\pr)} \\
    & \le  \left\| \max_{(j,k)\in[p]^2} \max_{m\in[n]} \left( \sum_{i:m\neq i}2P_i\{ \pi_{1,i}\psi_j(X_i)\pi_{2,im}\psi_k(X_i,X_m)\} \right)^2 \right\|_{L^{1}(\pr)} \\
    & \le 4  \left\| \max_{(j,k)\in[p]^2} \max_{m\in[n]} \left( \sum_{i:m\neq i} \|\pi_{1,i}\psi_j\|_{L^2(P_i)}^2 \right) \left( \sum_{i:m\neq i}P_i(\pi_{2,im}\psi_k)^2(X_m) \right)\right\|_{L^{1}(\pr)} \\
    & \le 4 \left(\max_{j\in[p]} \sum_{i=1}^n \|\pi_{1,i}\psi_j\|_{L^2(P_i)}^2 \right) \left\|  \max_{j\in[p]}\max_{i\in[n]} \sum_{m\neq i}P_m(\pi_{2,im}\psi_j)^2(X_i) \right\|_{L^1(\pr)}.
\end{align*}
where the third inequality follows from Schwarz inequality.
\end{proof}

\begin{proof}[Proof of \cref{eq:lem5-6-inid}]
Observe that
\begin{align} \label{eq:chi22-cr-1}
\begin{split}
    & ({\bar\chi}_{imm,jk}^{(2,2)})^2(X_i,X_m) \\
    & = \frac{1}{4}  \bigl( \pi_{2,im}\psi_j(X_i,X_m)\pi_{2,im}\psi_k(X_i,X_m) -  P_i\{\pi_{2,im}\psi_j(X_i,X_m)\pi_{2,im}\psi_k(X_i,X_m)\} \bigr)^2, \\
     & ({\bar\chi}_{mii,jk}^{(2,2)})^2(X_m,X_i) \\
     & =\frac{1}{4}  \bigl( \pi_{2,mi}\psi_j(X_m,X_i)\pi_{2,mi}\psi_k(X_m,X_i) -  P_m\{\pi_{2,mi}\psi_j(X_m,X_i)\pi_{2,mi}\psi_k(X_m,X_i)\} \bigr)^2 .
\end{split}
\end{align}
In conjunction with Eq.(5) in \cite{imai2025gaussian}, we have 
\begin{align*}
   & \max_{(j,k)\in[p]^2} \sum_{i=1}^n \sum_{m\neq i} \int (\tilde{\chi}_{im,jk}^{(2,2), \mathtt{diag}})^2(x_i,x_m) dP_i(x_i)dP_m(x_m) \\
   & \le \max_{(j,k)\in[p]^2}\sum_{i=1}^n \sum_{m\neq i}  \int (\bar\chi_{imm,jk}^{(2,2)})^2(x_i,x_m) dP_i(x_i)dP_m(x_m) \\
   & \le \frac{1}{4}\max_{(j,k)\in[p]^2} \sum_{i=1}^n \sum_{m\neq i}  \int \Big( \pi_{2,im}\psi_j(x_i,x_m)\pi_{2,im}\psi_k(x_i,x_m)  \Big)^2 dP_i(x_i)dP_m(x_m) \\
   & \quad +  \frac{1}{4}\max_{(j,k)\in[p]^2} \sum_{i=1}^n \sum_{m\neq i}  \int \Big( P_i\{\pi_{2,im}\psi_j(X_i,x_m)\pi_{2,im}\psi_k(X_i,x_m)\}  \Big)^2 dP_m(x_m) .
\end{align*}
In terms of the first term, Schwarz inequality gives
\begin{align*}
    & \frac{1}{4}\max_{(j,k)\in[p]^2} \sum_{i=1}^n \sum_{m\neq i}   \int \Big( \pi_{2,im}\psi_j(x_i,x_m)\pi_{2,im}\psi_k(x_i,x_m) \Big)^2 dP_i(x_i)dP_m(x_m) \\
    & \le  \frac{1}{4}\max_{(j,k)\in[p]^2}\sum_{i=1}^n \sum_{m\neq i}  \|\pi_{2,im} \psi_j\|_{L^4(P_i\otimes P_m)}^2 \|\pi_{2,im} \psi_k\|_{L^4(P_i\otimes P_m)}^2 \\
    & \le \frac{1}{4}\max_{j\in[p]} \sum_{i=1}^n \sum_{m\neq i} \|\pi_{2,im} \psi_j\|_{L^4(P_i\otimes P_m)}^4 .
\end{align*}
In terms of the second term, Schwarz and Jensen's inequality give
\begin{align*}
    & \frac{1}{4} \max_{(j,k)\in[p]^2} \sum_{i=1}^n \sum_{m\neq i}  \int \Big( P_i\{\pi_{2,im}\psi_j(X_i,x_m)\pi_{2,im}\psi_k(X_i,x_m)\}\Big)^2 dP_m(x_m) \\
    & \le  \frac{1}{4}\max_{(j,k)\in[p]^2} \sum_{i=1}^n \sum_{m\neq i}  \left( \int \Big( P_i(\pi_{2,im}\psi_j)^2(X_i,x_m)\Big)^2 dP_m(x_m) \right)^{1/2}  \left( \int \Big( P_i(\pi_{2,im}\psi_k)^2(X_i,x_m)\Big)^2 dP_m(x_m) \right)^{1/2}  \\
    & \le  \frac{1}{4}\max_{j\in[p]} \sum_{i=1}^n \sum_{m\neq i} \|P_i(|\pi_{2,im}\psi_j|^2)\|_{L^2(P_m)}^2 \\
    & \le \frac{1}{4}\max_{j\in[p]} \sum_{i=1}^n \sum_{m\neq i} \|\pi_{2,im}\psi_j\|_{L^4(P_i\otimes P_m)}^4.
\end{align*}
Summing up, 
\begin{align*}
    & \max_{(j,k)\in[p]^2} \sum_{i=1}^n \sum_{m\neq i} \int (\tilde{\chi}_{im,jk}^{(2,2), \mathtt{diag}})^2(x_i,x_m) dP_i(x_i)dP_m(x_m) \le \frac{1}{2}\max_{j\in[p]}\sum_{i=1}^n \sum_{m\neq i} \|\pi_{2,im} \psi_j\|_{L^4(P_i\otimes P_m)}^4  .
\end{align*}
\end{proof}

\begin{proof}[Proof of \cref{eq:lem5-7-inid}]
From \cref{eq:chi22-cr-1}, we have
\begin{align*}
    & \left\| \max_{(j,k)\in[p]^2} \max_{i\in[n]} \sum_{m\neq i} \int (\tilde{\chi}_{im,jk}^{(2,2), \mathtt{diag}})^2(X_i,x_m)dP_m(x_m) \right\|_{L^1(\pr)} \\
    & \le \frac{1}{8}\left\| \max_{(j,k)\in[p]^2}  \max_{i\in[n]} \sum_{m\neq i}  \int \Big( \pi_{2,im}\psi_j(X_i,x_m)\pi_{2,im}\psi_k(X_i,x_m) \right. \\
    & \left. \qquad\qquad\qquad\qquad\qquad\qquad - P_i\{\pi_{2,im}\psi_j(X_i,x_m)\pi_{2,im}\psi_k(X_i,x_m)\} \Big)^2 dP_m(x_m)  \right\|_{L^1(\pr)} \\
    & \quad + \frac{1}{8}\left\| \max_{(j,k)\in[p]^2}  \max_{i\in[n]} \sum_{m\neq i}  \int \Big( \pi_{2,mi}\psi_j(x_m,X_i)\pi_{2,mi}\psi_k(x_m,X_i) \right. \\
    & \left. \qquad\qquad\qquad\qquad\qquad\qquad - P_m\{\pi_{2,mi}\psi_j(X_m,X_i)\pi_{2,mi}\psi_k(X_m,X_i)\} \Big)^2 dP_m(x_m)  \right\|_{L^1(\pr)} .
\end{align*}
In terms of the first term, from the Jensen's inequality
\begin{align*}
     & \frac{1}{8}\left\| \max_{(j,k)\in[p]^2}  \max_{i\in[n]} \sum_{m\neq i}  \int \Big( \pi_{2,im}\psi_j(X_i,x_m)\pi_{2,im}\psi_k(X_i,x_m) \right. \\
    & \left. \qquad\qquad\qquad\qquad\qquad\qquad - P_i\{\pi_{2,im}\psi_j(X_i,x_m)\pi_{2,im}\psi_k(X_i,x_m)\} \Big)^2 dP_m(x_m)\}  \right\|_{L^1(\pr)} \\
    & \le \frac{1}{4}\left\| \max_{(j,k)\in[p]^2}  \max_{i\in[n]} \sum_{m\neq i}  \int (\pi_{2,im}\psi_j)^2(X_i,x_m)(\pi_{2,im}\psi_k)^2(X_i,x_m) dP_m(x_m) \right\|_{L^1(\pr)}    \\
    & \le \frac{1}{4}\E\left[ \max_{(j,k)\in[p]^2}  \max_{i\in[n]} \sum_{m\neq i} \left(P_m(\pi_{2,im}\psi_j)^4(X_i)\right)^{1/2}\left(P_m(\pi_{2,im}\psi_k)^4(X_i)\right)^{1/2} \right] \\
    & \le \frac{1}{4}\E\left[ \max_{j\in[p]}  \max_{i\in[n]} \sum_{m\neq i} P_m(\pi_{2,im}\psi_j)^4(X_i) \right],
\end{align*}
where the second inequality follows from Schwarz inequality. 
In terms of the second term, similar evaluation gives
\begin{align*}
    & \frac{1}{8}\left\| \max_{(j,k)\in[p]^2} \max_{i\in[n]} \sum_{m\neq i}  \int \Big( \pi_{2,mi}\psi_j(x_m,X_i)\pi_{2,mi}\psi_k(x_m,X_i) \right. \\
    & \left. \qquad\qquad\qquad\qquad\qquad\qquad - P_m\{\pi_{2,mi}\psi_j(x_m,X_i)\pi_{2,mi}\psi_k(x_m,X_i)\} \Big)^2 dP_m(x_m)  \right\|_{L^1(\pr)}  \\  
    & \le  \frac{1}{4}\E\left[ \max_{j\in[p]}  \max_{i\in[n]} \sum_{m\neq i} P_m(\pi_{2,mi}\psi_j)^4(X_i) \right] .
\end{align*}

Summing up
\begin{align*}
    & \left\| \max_{(j,k)\in[p]^2}  \max_{i\in[n]} \sum_{m\neq i} \int (\tilde{\chi}_{im,jk}^{(2,2),\texttt{diag}})^2(X_i,x_m)dP_m(x_m) \right\|_{L^1(\pr)}  \le \frac{1}{2}\E\left[ \max_{j\in[p]}  \max_{i\in[n]} \sum_{m\neq i} P_m(\pi_{2,im}\psi_j)^4(X_i) \right] . 
\end{align*}

\end{proof}

\begin{proof}[Proof of \cref{eq:lem5-8-inid}]
From \cref{eq:chi22-cr-1}, we have
\begin{align*}
    & \left\| \max_{(j,k)\in[p]^2} \max_{(i,m)\in I_{n,2}} (\tilde{\chi}_{im,jk}^{(2,2)})^2(X_i,X_m) \right\|_{L^1(\pr)} \\
    & \le \frac{1}{4}\left\| \max_{(j,k)\in[p]^2} \max_{(i,m)\in I_{n,2}}  \Big( \pi_{2,im}\psi_j(X_i,X_m)\pi_{2,im}\psi_k(X_i,X_m) \right.\\
    & \left. \qquad\qquad\qquad\qquad\qquad\qquad   - P_i\{\pi_{2,im}\psi_j(X_i,X_m)\pi_{2,im}\psi_k(X_i,X_m)\} \Big)^2   \right\|_{L^1(\pr)} \\
    & \le \frac{1}{2}\left\| \max_{(j,k)\in[p]^2} \max_{(i,m)\in I_{n,2}} (\pi_{2,im}\psi_j)^2(X_i,X_m)(\pi_{2,im}\psi_k)^2(X_i,X_m)  \right\|_{L^1(\pr)}  \\
    & \le \frac{1}{2}\E\left[ \max_{j\in[p]} \max_{(i,m)\in I_{n,2}} (\pi_{2,im}\psi_j)^4(X_i,X_m) \right] \\
    & =  \frac{1}{2}\left\| \max_{j\in[p]} \max_{(i,m)\in I_{n,2}} |\pi_{2,im}\psi_j| \right\|_{L^4(\mathbb P)}^4,
\end{align*}
where the second inequality follows from $c_r$ inequality and Jensen's inequality and the final inequality follows from the AM-GM inequality and Schwarz inequality.
\end{proof}

\begin{proof}[Proof of \cref{eq:lem5-9-inid}]
Recall that
\begin{align}
    &\bar\chi^{(2,2)}_{iml,jk}(X_i,X_m,X_l)  \coloneqq \frac{1}{2}  \bigl( \pi_{2,im}\psi_j(X_i,X_m)\pi_{2,il}\psi_k(X_i,X_l) -  P_i\{\pi_{2,im}\psi_j(X_i,X_m)\pi_{2,il}\psi_k(X_i,X_l)\} \bigr). \label{eq:chi22-cr-2}
\end{align}
Then, from Eq.(5) in \cite{imai2025gaussian}, $c_r$ inequality and Jensen's inequality, we have, we have
\begin{align*}
     & \max_{(j,k)\in[p]^2} \sum_{i=1}^n\sum_{m\neq i}\sum_{l\neq i,m} \int (\tilde{\chi}^{(2,2)}_{iml,jk})^2(x_i,x_m,x_l) dP_i(x_i)dP_m(x_m)dP_l(x_l) \\
     & \le \max_{(j,k)\in[p]^2}\sum_{i=1}^n\sum_{m\neq i}\sum_{l\neq i,m}  \int (\bar\chi^{(2,2)}_{iml,jk})^2(x_i,x_m,x_l) dP_i(x_i)dP_m(x_m)dP_l(x_l) \\
     & \le \frac{1}{2}\max_{(j,k)\in[p]^2} \sum_{i=1}^n\sum_{m\neq i}\sum_{l\neq i,m}   \int \Big( \pi_{2,im}\psi_j(x_i,x_m)\pi_{2,il}\psi_k(x_i,x_l) \Big)^2 dP_i(x_i)dP_m(x_m)dP_l(x_l) \\
     & \quad + \frac{1}{2}\max_{(j,k)\in[p]^2} \sum_{i=1}^n\sum_{m\neq i}\sum_{l\neq i,m}   \int \Big( P_i\{\pi_{2,im}\psi_j(X_i,x_m)\pi_{2,il}\psi_k(X_i,x_l)\} \Big)^2 dP_m(x_m)dP_l(x_l)  \\
     & \le \max_{(j,k)\in[p]^2} \sum_{i=1}^n\sum_{m\neq i}\sum_{l\neq i,m}   \int \Big( \pi_{2,im}\psi_j(x_i,x_m)\pi_{2,il}\psi_k(x_i,x_l) \Big)^2 dP_i(x_i)dP_m(x_m)dP_l(x_l) \\
     & \le \max_{(j,k)\in[p]^2} \sum_{i=1}^n\sum_{m\neq i}\sum_{l\neq i,m}   \int \Big( \pi_{2,im}\psi_j(x_i,x_m)\pi_{2,il}\psi_k(x_i,x_l) \Big)^2 dP_i(x_i)dP_m(x_m)dP_l(x_l) .
\end{align*} 
Tower property of the conditionals expectations gives
\begin{align*}
    & \max_{(j,k)\in[p]^2} \sum_{i=1}^n\sum_{m\neq i}\sum_{l\neq i,m}   \int \Big( \pi_{2,im}\psi_j(x_i,x_m)\pi_{2,il}\psi_k(x_i,x_l) \Big)^2 dP_i(x_i)dP_m(x_m)dP_l(x_l) \\
    & = \max_{(j,k)\in[p]^2} \sum_{i=1}^n\sum_{m\neq i}\sum_{l\neq i,m}  \int P_m(\pi_{2,im}\psi_j)^2(x_i) P_l(\pi_{2,il}\psi_k)^2(x_i) dP_i(x_i) \\
    & \le \max_{(j,k)\in[p]^2} \sum_{i=1}^n\sum_{m\neq i}\sum_{l\neq i}  \int P_m(\pi_{2,im}\psi_j)^2(x_i) P_l(\pi_{2,il}\psi_k)^2(x_i) dP_i(x_i)\\
    & = \max_{(j,k)\in[p]^2} \sum_{i=1}^n  \int \left( \sum_{m\neq i} P_m(\pi_{2,im}\psi_j)^2(x_i) \right) \left( \sum_{l\neq i}P_l(\pi_{2,il}\psi_k)^2(x_i) \right) dP_i(x_i).
\end{align*}
Schwarz inequality gives
\begin{align*}
    & \max_{(j,k)\in[p]^2} \sum_{i=1}^n  \int \left( \sum_{m\neq i} P_m(\pi_{2,im}\psi_j)^2(x_i) \right) \left( \sum_{l\neq i}P_l(\pi_{2,il}\psi_k)^2(x_i) \right) dP_i(x_i) \\
    & \le \max_{(j,k)\in[p]^2} \left[\sum_{i=1}^n P_i \left( \sum_{m\neq i} P_m(\pi_{2,im}\psi_j)^2(x_i) \right)^2 \right]^{1/2} \left[  \sum_{i=1}^n P_i \left( \sum_{l\neq i}P_l(\pi_{2,il}\psi_k)^2(x_i) \right) ^2 \right]^{1/2} \\
    & \le \max_{j\in[p]} \sum_{i=1}^n \left\| \sum_{m\neq i} P_m(\pi_{2,im}\psi_j)^2 \right\|_{L^2(P_i)}^2.
\end{align*}
\end{proof}

\begin{proof}[Proof of \cref{eq:lem5-10-inid}]
From the $c_r$ inequality, we have,
\begin{align*}
     & \left\| \max_{(j,k)\in[p]^2} \max_{i\in[n]} \sum_{m\neq i}\sum_{l\neq i,m} \int (\tilde{\chi}^{(2,2)}_{iml,jk})^2(X_i,x_m,x_l)dP_m(x_m)dP_l(x_l) \right\|_{L^1(\pr)} \\
     & \le \frac{1}{3} \left\| \max_{(j,k)\in[p]^2} \max_{i\in[n]} \sum_{m\neq i}\sum_{l\neq i,m} \int (\bar\chi^{(2,2)}_{iml,jk})^2(X_i,x_m,x_l)dP_m(x_m)dP_l(x_l) \right\|_{L^1(\pr)}  \\
     & \quad + \frac{1}{3} \left\| \max_{(j,k)\in[p]^2}\max_{i\in[n]} \sum_{m\neq i}\sum_{l\neq i,m} \int (\bar\chi^{(2,2)}_{mil,jk})^2(x_m,X_i,x_l)dP_m(x_m)dP_l(x_l) \right\|_{L^1(\pr)}  \\
     & \quad + \frac{1}{3}  \left\| \max_{(j,k)\in[p]^2} \max_{i\in[n]} \sum_{m\neq i}\sum_{l\neq i,m} \int (\bar\chi^{(2,2)}_{mli,jk})^2(x_m,x_l,X_i)dP_m(x_m)dP_l(x_l) \right\|_{L^1(\pr)}.
\end{align*}
In terms of the first term, from \cref{eq:chi22-cr-2} and Jensen's inequality, we can see that
\begin{align*}
    & \frac{1}{3} \left\| \max_{(j,k)\in[p]^2} \max_{i\in[n]} \sum_{m\neq i}\sum_{l\neq i,m} \int (\bar\chi^{(2,2)}_{iml,jk})^2(X_i,x_m,x_l)dP_m(x_m)dP_l(x_l) \right\|_{L^1(\pr)} \\
    & \le \frac{1}{6}  \left\| \max_{(j,k)\in[p]^2} \max_{i\in[n]} \sum_{m\neq i}\sum_{l\neq i,m} \int (\pi_{2,im}\psi_j)^2(X_i,x_m)(\pi_{2,il}\psi_k)^2(X_i,x_l)dP_m(x_m)dP_l(x_l) \right\|_{L^1(\pr)} \\
    & \quad + \frac{1}{6} \max_{(j,k)\in[p]^2} \max_{i\in[n]} \sum_{m\neq i}\sum_{l\neq i,m} \int \left( \int \pi_{2,im}\psi_j(x_i,x_m)\pi_{2,il}\psi_k(x_i,x_l)dP_i(x_i)\right)^2 dP_m(x_m)dP_l(x_l) \\
    & \le\frac{1}{3}  \left\| \max_{(j,k)\in[p]^2} \max_{i\in[n]} \sum_{m\neq i}\sum_{l\neq i,m} \int (\pi_{2,im}\psi_j)^2(X_i,x_m)(\pi_{2,il}\psi_k)^2(X_i,x_l)dP_m(x_m)dP_l(x_l) \right\|_{L^1(\pr)}  \\
    & \le \frac{1}{3} \E\left[ \max_{(j,k)\in[p]^2} \max_{i\in[n]} \left( \sum_{m\neq i} P_m(\pi_{2,im}\psi_j)^2(X_i) \right) \left( \sum_{l\neq i}P_l(\pi_{2,il}\psi_k)^2(X_i)  \right) \right]  \\
    & \le \frac{1}{3} \left\| \max_{j\in[p]} \max_{i\in[n]} \sum_{m\neq i} P_m(\pi_{2,im}\psi_j)^2(X_i) \right\|_{L^2(\mathbb P )}^2,
\end{align*}
In terms of the second term,  from \cref{eq:chi22-cr-2}, $c_r$ inequality and Jensen's inequality, we can see that
\begin{align*}
    & \frac{1}{3} \left\| \max_{(j,k)\in[p]^2} \max_{i\in[n]} \sum_{m\neq i}\sum_{l\neq i,m} \int (\bar\chi^{(2,2)}_{mil,jk})^2(x_m,X_i,x_l)dP_m(x_m)dP_l(x_l) \right\|_{L^1(\pr)} \\
    & \le \frac{1}{6} \left\| \max_{(j,k)\in[p]^2} \max_{i\in[n]} \sum_{m\neq i}\sum_{l\neq i,m}\int (\pi_{2,mi}\psi_j)^2(x_m,X_i)(\pi_{2,ml}\psi_k)^2 (x_m,x_l)dP_m(x_m)dP_l(x_l)\right\|_{L^1(\pr)} \\
    & \quad + \frac{1}{6} \left\| \max_{(j,k)\in[p]^2}   \max_{i\in[n]} \sum_{m\neq i}\sum_{l\neq i,m}\int \left( \int\pi_{2,mi}\psi_j(x_m,X_i)\pi_{2,ml}\psi_k(x_m,x_l) dP_m(x_m)\right)^2 dP_l(x_l) \right\|_{L^1(\pr)} \\
    & \le \frac{1}{3} \left\| \max_{(j,k)\in[p]^2} \max_{i\in[n]} \sum_{m\neq i}\sum_{l\neq m}\int (\pi_{2,mi}\psi_j)^2(x_m,X_i)(\pi_{2,ml}\psi_k)^2 (x_m,x_l)dP_m(x_m)dP_l(x_l)\right\|_{L^1(\pr)}\\
    & = \frac{1}{3} \left\| \max_{(j,k)\in[p]^2} \max_{i\in[n]} \sum_{m\neq i} P_m \left( (\pi_{2,im}\psi_j)^2(X_i,X_m) \sum_{l\neq m} P_l(\pi_{2,ml}\psi_k)^2(X_m) \right)  \right\|_{L^1(\pr)}\\
    & \le \frac{1}{3}\mathbb{E}\left[ \max_{j\in[p]} \max_{i\in[n]}  \sum_{m\neq i} P_m (\pi_{2,im}\psi_j)^4(X_i)\right]^{1/2} \left(\max_{j\in[p]} \sum_{i=1}^n \left\| \sum_{m\neq i} P_m(\pi_{2,im}\psi_j)^2 (X_i)\right\|_{L^2(P_i)}^2\right)^{1/2},
\end{align*}
where the final inequality follows from Schwarz inequality.
In terms of the third term,  from \cref{eq:chi22-cr-2}, $c_r$ inequality and Jensen's inequality, we can see that
\begin{align*}
    & \frac{1}{3}  \left\| \max_{(j,k)\in[p]^2} \max_{i\in[n]} \sum_{m\neq i}\sum_{l\neq i,m} \int (\bar\chi^{(2,2)}_{mli,jk})^2(x_m,x_l,X_i)dP_m(x_m)dP_l(x_l) \right\|_{L^1(\pr)} \\
    & \le \frac{1}{6} \left\|  \max_{(j,k)\in[p]^2} \max_{i\in[n]} \sum_{m\neq i}\sum_{l\neq i,m} \int (\pi_{2,ml}\psi_j)^2(x_m,x_l)(\pi_{2,mi}\psi_k)^2(x_m,X_i) dP_m(x_m)dP_l(x_l)\right\|_{L^1(\pr)} \\
    & \quad + \frac{1}{6} \left\|\max_{(j,k)\in[p]^2} \max_{i\in[n]} \sum_{m\neq i}\sum_{l\neq i,m} \int  \left( \int \pi_{2,ml}\psi_j(x_m,x_l)\pi_{2,mi}\psi_k(x_m,X_i) dP_m(x_m)\right)^2 dP_l(x_l) \right\|_{L^1(\pr)} \\
    & \le\frac{1}{3} \left\|  \max_{(j,k)\in[p]^2} \max_{i\in[n]} \sum_{m\neq i}\sum_{l\neq m} \int (\pi_{2,ml}\psi_j)^2(x_m,x_l)(\pi_{2,mi}\psi_k)^2(x_m,X_i) dP_m(x_m)dP_l(x_l)\right\|_{L^1(\pr)}  \\
    & = \frac{1}{3} \left\|  \max_{(j,k)\in[p]^2} \max_{i\in[n]} \sum_{m\neq i} P_m \left( (\pi_{2,mi}\psi_k)^2(X_m,X_i) \sum_{l\neq m} P_l(\pi_{2,ml}\psi_j)^2(X_m) \right)\right\|_{L^1(\pr)} \\
    & \le \frac{1}{3}\mathbb{E}\left[ \max_{j\in[p]} \max_{i\in[n]}  \sum_{m\neq i} P_m (\pi_{2,im}\psi_j)^4(X_i)\right]^{1/2} \left(\max_{j\in[p]} \sum_{i=1}^n \left\| \sum_{m\neq i} P_m(\pi_{2,im}\psi_j)^2 (X_i)\right\|_{L^2(P_i)}^2 \right)^{1/2}.
\end{align*}
Summing up
\begin{align*}
    &   \left\| \max_{(j,k)\in[p]^2} \max_{i\in[n]} \sum_{m\neq i}\sum_{l\neq i,m} \int (\tilde{\chi}^{(2,2)}_{iml,jk})^2(X_i,x_m,x_l)dP_m(x_m)dP_l(x_l) \right\|_{L^1(\pr)}  \\
    & \le \frac{1}{3} \left\| \max_{j\in[p]} \max_{i\in[n]} \sum_{m\neq i} P_m(\pi_{2,im}\psi_j)^2(X_i) \right\|_{L^2(\mathbb P )}^2 \\
    & \quad + \frac{2}{3}\mathbb{E}\left[ \max_{j\in[p]} \max_{i\in[n]}  \sum_{m\neq i} P_m (\pi_{2,im}\psi_j)^4(X_i)\right]^{1/2} \left(\max_{j\in[p]} \sum_{i=1}^n \left\| \sum_{m\neq i} P_m(\pi_{2,im}\psi_j)^2 (X_i)\right\|_{L^2(P_i)}^2 \right)^{1/2}.
\end{align*}

\end{proof}

\begin{proof}[Proof of \cref{eq:lem5-11-inid}]
From $c_r$ inequality, we have
\begin{align*}
    & \left\| \max_{(j,k)\in[p]^2} \max_{(i,m)\in I_{n,2}} \sum_{l\neq i,m} \int (\tilde{\chi}^{(2,2)}_{iml,jk})^2(X_i,X_m,x_l)dP_l(x_l)\right\|_{L^1(\pr)} \\
     &  \le \frac{1}{3} \left\| \max_{(j,k)\in[p]^2}\max_{(i,m)\in I_{n,2}} \sum_{l\neq i,m}  \int (\bar\chi^{(2,2)}_{iml,jk})^2(X_i,X_m,x_l)dP_l(x_l) \right\|_{L^1(\pr)}  \\
     & \quad + \frac{1}{3} \left\| \max_{(j,k)\in[p]^2} \max_{(i,m)\in I_{n,2}} \sum_{l\neq i,m}  \int (\bar\chi^{(2,2)}_{lim,jk})^2(x_l,X_i,X_m)dP_l(x_l) \right\|_{L^1(\pr)}  \\
     & \quad + \frac{1}{3}  \left\| \max_{(j,k)\in[p]^2} \max_{(i,m)\in I_{n,2}} \sum_{l\neq i,m}  \int (\bar\chi^{(2,2)}_{mli,jk})^2(X_m,x_l,X_i)dP_l(x_l) \right\|_{L^1(\pr)} .
\end{align*}
In terms of the first term, from \cref{eq:chi22-cr-2} and $c_r$ inequality, we can see that
\begin{align*}
    &  \frac{1}{3} \left\| \max_{(j,k)\in[p]^2} \max_{(i,m)\in I_{n,2}} \sum_{l\neq i,m}  \int (\bar\chi^{(2,2)}_{iml,jk})^2(X_i,X_m,x_l)dP_l(x_l) \right\|_{L^1(\pr)} \\
    & \le \frac{1}{6} \left\| \max_{(j,k)\in[p]^2} \max_{(i,m)\in I_{n,2}}   (\pi_{2,im}\psi_j)^2(X_i,X_m) \sum_{l\neq i,m}\int (\pi_{2,il}\psi_k)^2(X_i,x_l) dP_l(x_l) \right\|_{L^1(\pr)} \\
    & \quad + \frac{1}{6} \left\| \max_{(j,k)\in[p]^2} \max_{(i,m)\in I_{n,2}} \sum_{l\neq i,m}  \int \left( \int \pi_{2,im}\psi_j(x_i,X_m)\pi_{2,il}\psi_k(x_i,x_l) dP_i(x_i) \right)^2 dP_l(x_l) \right\|_{L^1(\pr)} \\
    & \le \frac{1}{6}\E\left[ \max_{j\in[p]} \max_{(i,m)\in I_{n,2}} (\pi_{2,im}\psi_j)^4(X_i,X_m) \right]^{1/2} \left\| \max_{j\in[p]} \max_{i\in[n]}  \sum_{l\neq i}P_l(\pi_{2,il}\psi_j)^2(X_i) \right\|_{L^2(\mathbb P)} \\
    & \quad + \frac{1}{6} \E\left[ \max_{j\in[p]} \max_{(i,m)\in I_{n,2}} P_i(\pi_{2,im}\psi_j)^4(X_m) \right]^{1/2} \left( \max_{j\in[p]}\max_{i\in[n]} \left\|\sum_{l\neq i} P_l(\pi_{2,il}\psi_j)^2(X_i) \right\|_{L^2(P_i)}^2 \right)^{1/2} \\
    & \le \frac{1}{6}\left\| \max_{j\in[p]} \max_{(i,m)\in I_{n,2}} |\pi_{2,im}\psi_j| \right\|^{2}_{L^4(\mathbb P)} \left\| \max_{j\in[p]} \max_{i\in[n]}  \sum_{m\neq i}P_m(\pi_{2,im}\psi_j)^2(X_i) \right\|_{L^2(\mathbb P)} \\
    & \quad + \frac{1}{6} \E\left[ \max_{j\in[p]} \max_{i\in[n]} \sum_{m\neq i} P_m(\pi_{2,im}\psi_j)^4(X_i) \right]^{1/2} \left( \max_{j\in[p]}\sum_{i=1}^n \left\|\sum_{m\neq i} P_l(\pi_{2,il}\psi_j)^2(X_i) \right\|_{L^2(P_i)}^2 \right)^{1/2},
\end{align*}
where the second inequality follows from Schwarz inequality.
In terms of the second term, from \cref{eq:chi22-cr-2}, $c_r$ inequality and Jensen's inequality, we can see that
\begin{align*}
    & \frac{1}{3} \left\| \max_{(j,k)\in[p]^2}  \max_{(i,m)\in I_{n,2}} \sum_{l\neq i,m}  \int (\bar\chi^{(2,2)}_{lim,jk})^2(x_l,X_i,X_m)dP_l(x_l) \right\|_{L^1(\pr)} \\
    & \le \frac{1}{6} \left\| \max_{(j,k)\in[p]^2}  \max_{(i,m)\in I_{n,2}} \sum_{l\neq i,m}  \int (\pi_{2,li}\psi_j)^2(x_l,X_i)(\pi_{2,lm}\psi_k)^2(x_l,X_m)dP_l(x_l) \right\|_{L^1(\pr)} \\
    & \quad + \frac{1}{6} \left\| \max_{(j,k)\in[p]^2}  \max_{(i,m)\in I_{n,2}} \sum_{l\neq i,m}  \left(\int  \pi_{2,li}\psi_j(x_l,X_i)\pi_{2,lm}\psi_k(x_l,X_m )dP_l(x_l)\right)^2 \right\|_{L^1(\pr)} \\
    & \le \frac{1}{3} \left\| \max_{(j,k)\in[p]^2}  \max_{(i,m)\in I_{n,2}} \sum_{l\neq i,m}  \int (\pi_{2,li}\psi_j)^2(x_l,X_i)(\pi_{2,lm}\psi_k)^2(x_l,X_m)dP_l(x_l) \right\|_{L^1(\pr)}\\
    & \le \frac{1}{3}  \E\left[ \max_{(j,k)\in[p]^2}  \max_{(i,m)\in I_{n,2}}  \left( \sum_{l\neq i} P_l(\pi_{2,li}\psi_j)^4(X_i)\right)^{1/2} \left( \sum_{l\neq m} P_l(\pi_{2,lm}\psi_k)^4(X_m)\right)^{1/2}\right] \\
    & \le \frac{1}{3}  \E\left[ \max_{j\in[p]}  \max_{i\in [n]}   \sum_{m\neq i} P_m(\pi_{2,im}\psi_j)^4(X_i)\right].
\end{align*}
In terms of the third term,  from \cref{eq:chi22-cr-2}, we can see that
\begin{align*}
    & \frac{1}{3}  \left\| \max_{(j,k)\in[p]^2} \max_{(i,m)\in I_{n,2}} \sum_{l\neq i,m}  \int (\bar\chi^{(2,2)}_{mli,jk})^2(X_m,x_l,X_i)dP_l(x_l) \right\|_{L^1(\pr)} \\
    & \le \frac{1}{6}\left\| \max_{(j,k)\in[p]^2}  \max_{(i,m)\in I_{n,2}} (\pi_{2,mi}\psi_k)^2(X_m,X_i)  \sum_{l\neq i,m}\int (\pi_{2,ml}\psi_j)^2(X_m,x_l) dP_l(x_l) \right\|_{L^1(\pr)} \\
    & \quad + \frac{1}{6} \left\| \max_{(j,k)\in[p]^2}  \max_{(i,m)\in I_{n,2}} \sum_{l\neq i,m} \int \left( \int \pi_{2,ml}\psi_j(x_m,x_l)\pi_{2,mi}\psi_k(x_m,X_i)dP_m(x_m)\right)^2   dP_l(x_l) \right\|_{L^1(\pr)} \\
    & \le \frac{1}{6} \left\| \max_{j\in[p]} \max_{(i,m)\in I_{n,2}} |\pi_{2,im}\psi_j| \right\|^{2}_{L^4(\mathbb P)} \left\| \max_{j\in[p]} \max_{i\in[n]} \sum_{m\neq n} P_m(\pi_{2,im}\psi_j)^2(X_i) \right\|_{L^2(\mathbb P)} \\
    & \quad + \frac{1}{6} \mathbb{E}\left[  \max_{j\in[p]} \max_{i\in[n]} \sum_{m\neq i} P_m(\pi_{2,im}\psi_j)^4(X_i)\right]^{1/2} \left( \max_{j\in[p]} \sum_{i=1}^n \left\| \sum_{m\neq i } P_m(\pi_{2,im}\psi_j)^2(X_i)\right\|_{L^2(P_i)}^2 \right)^{1/2} .
\end{align*}
Summing up,
\begin{align*}
    & \left\| \max_{(j,k)\in[p]^2} \max_{(i,m,l)\in I_{n,3}} \int (\tilde{\chi}^{(2,2)}_{iml,jk})^2(X_i,X_m,x_l)dP_l(x_l)\right\|_{L^1(\pr)} \\
    & \le \frac{1}{3} \left\| \max_{j\in[p]} \max_{(i,m)\in I_{n,2}} |\pi_{2,im}\psi_j| \right\|^{2}_{L^4(\mathbb P)} \left\| \max_{j\in[p]} \max_{i\in[n]} \sum_{m\neq n} P_m(\pi_{2,im}\psi_j)^2(X_i) \right\|_{L^2(\mathbb P)} \\
    & \quad + \frac{1}{3} \mathbb{E}\left[  \max_{j\in[p]} \max_{i\in[n]} \sum_{m\neq i} P_m(\pi_{2,im}\psi_j)^4(X_i)\right]^{1/2} \left( \max_{j\in[p]} \sum_{i=1}^n \left\| \sum_{m\neq i } P_m(\pi_{2,im}\psi_j)^2(X_i)\right\|_{L^2(P_i)}^2 \right)^{1/2}  \\
    & \quad + \frac{1}{3}  \E\left[ \max_{j\in[p]}  \max_{i\in [n]}   \sum_{m\neq i} P_m(\pi_{2,im}\psi_j)^4(X_i)\right].
\end{align*}
\end{proof}

\begin{proof}[Proof of \cref{eq:lem5-12-inid}]
From Eq.(5) in \cite{imai2025gaussian}, we have
\begin{align*}
    & \left\| \max_{(j,k)\in[p]^2} \max_{(i,m,l)\in I_{n,3}} (\tilde{\chi}^{(2,2)}_{iml,jk})^2(X_i,X_m,X_l) \right\|_{L^1(\pr)}  
    \le  \left\| \max_{(j,k)\in[p]^2} \max_{(i,m,l)\in I_{n,3}} (\bar\chi^{(2,2)}_{iml,jk})^2(X_i,X_m,X_l)  \right\|_{L^1(\pr)}.
\end{align*}
From \cref{eq:chi22-cr-2} and Schwarz inequality, we can see that
\begin{align*}
    & \left\| \max_{(j,k)\in[p]^2} \max_{(i,m,l)\in I_{n,3}} (\chi^{(2,2)}_{iml,jk})^2(X_i,X_m,X_l)  \right\|_{L^1(\pr)} \\
    & \le \frac{1}{2} \left\| \max_{(j,k)\in[p]^2} \max_{(i,m,l)\in I_{n,3}} (\pi_{2,im}\psi_j)^2(X_i,X_m)(\pi_{2,il}\psi_k)^2(X_i,X_l)   \right\|_{L^1(\pr)} \\
    & \quad + \frac{1}{2} \left\| \max_{(j,k)\in[p]^2} \max_{(i,m,l)\in I_{n,3}} \left( \int \pi_{2,im}\psi_j(x_i,X_m)\pi_{2,il}\psi_k(x_i,X_l) dP_i(x_i)\right)^2  \right\|_{L^1(\pr)}  \\ 
    & \le \frac{1}{2} \E\left[ \max_{j\in[p]} \max_{(i,m)\in I_{n,2}} (\pi_{2,im}\psi_j)^4(X_i,X_m)\right]^{1/2} \E\left[ \max_{k\in[p]} \max_{(i,l)\in I_{n,2}} (\pi_{2,il}\psi_k)^4(X_i,X_l)\right]^{1/2} \\
    & \quad + \frac{1}{2}\E\left[ \max_{(j,k)\in[p]^2} \max_{(i,m,l)\in I_{n,3}} P_i(\pi_{2,im}\psi_j)^2(X_m) P_i(\pi_{2,il}\psi_k)^2(X_l) \right] \\
    & = \frac{1}{2} \E\left[ \max_{j\in[p]} \max_{(i,m)\in I_{n,2}} (\pi_{2,im}\psi_j)^4(X_i,X_m)\right]\\
    & \quad + \frac{1}{2}\E\left[ \max_{(j,k)\in[p]^2} \max_{(i,m,l)\in I_{n,3}} P_i(\pi_{2,im}\psi_j)^2(X_m) P_i(\pi_{2,il}\psi_k)^2(X_l) \right].
\end{align*}
In terms of the second term
\begin{align*}
    & \frac{1}{2}\E\left[ \max_{(j,k)\in[p]^2} \max_{(i,m,l)\in I_{n,3}} P_i(\pi_{2,im}\psi_j)^2(X_m) P_i(\pi_{2,il}\psi_k)^2(X_l) \right] \\
    & \le \frac{1}{4}\E\left[ \max_{(j,k)\in[p]^2} \max_{(i,m,l)\in I_{n,3}} \left\{ \left( P_i(\pi_{2,im}\psi_j)^2(X_m) \right)^2 + \left( P_i(\pi_{2,il}\psi_k)^2(X_l) \right)^2 \right\}\right] \\
    & \le \frac{1}{4}\E\left[ \max_{(j,k)\in[p]^2} \max_{(i,m,l)\in I_{n,3}} \left\{  P_i(\pi_{2,im}\psi_j)^4(X_m) +  P_i(\pi_{2,il}\psi_k)^4(X_l) \right\}\right] \\
    & \le \frac{1}{2} \E\left[ \max_{j\in[p]} \max_{(i,m)\in I_{n,2}} P_i(\pi_{2,im}\psi_j)^4(X_m) \right]\\
    & \le\frac{1}{2} \E\left[ \max_{j\in[p]} \max_{(i,m)\in I_{n,2}} (\pi_{2,im}\psi_j)^4(X_i, X_m) \right] ,
\end{align*}
where the first inequality follows from AM-GM inequality, the second inequality follows from Jensen's inequality and the final inequality follows from \cref{lem:max-Jensen_inid}.
Therefore
\begin{align*}
    & \left\| \max_{(j,k)\in[p]^2} \max_{(i,m,l)\in I_{n,3}} (\chi^{(2,2)}_{iml,jk})^2(X_i,X_m,X_l)  \right\|_{L^1(\pr)}  \le \E\left[ \max_{j\in[p]} \max_{(i,m)\in I_{n,2}} (\pi_{2,im}\psi_j)^4(X_i,X_m)\right]  .
\end{align*}
\end{proof}

\begin{proof}[Proof of \cref{eq:lem5-13-inid}]
First recall $\varphi^{(2,2)}_{ml,jk}(X_m,X_l)$ is defined as follows
\begin{align}
    & \varphi^{(2,2)}_{ml,jk}(X_m,X_l) \nonumber\\
    & \coloneqq \sum_{i:m,l\neq i} \Big( P_i\{\pi_{2,im}\psi_j(X_i,X_m)\pi_{2,il}\psi_k(X_i,X_l)\} - P_iP_mP_l\{\pi_{2,im}\psi_j(X_i,X_m)\pi_{2,il}\psi_k(X_i,X_l)\}  \Big). \label{eq:recall_def_varphi22}
\end{align}
From Eq.(5) in \cite{imai2025gaussian}, we have
\begin{align*}
     & \max_{(j,k)\in[p]^2} \sum_{m=1}^n\sum_{l\neq m} \int (\tilde{\varphi}_{ml,jk}^{(2,2)})^2(x_m,x_l) dP_m(x_m)dP_l(x_l) \\
     & \le \max_{(j,k)\in[p]^2}  \sum_{m=1}^n\sum_{l\neq m}  \int ({\varphi}_{ml,jk}^{(2,2)})^2(x_m,x_l) dP_m(x_m)dP_l(x_l) .
\end{align*}
Since we consider the case where $i\neq m\neq l$, it holds that $P_iP_mP_l\{\pi_{2,im}\psi_j(X_i,X_m)\pi_{2,il}\psi_k(X_i,X_l)\} = 0$.
Therefore, we can see that
\begin{align*}
    & \max_{(j,k)\in[p]^2}  \sum_{m=1}^n\sum_{l\neq m}  \int ({\varphi}_{ml,jk}^{(2,2)})^2(x_m,x_l) dP_m(x_m)dP_l(x_l) \\
    & = \max_{(j,k)\in[p]^2} \sum_{m=1}^n\sum_{l\neq m}  \int \left( \sum_{i:m,l\neq i}^n P_i\{\pi_{2,im}\psi_j(X_i,x_m)\pi_{2,il}\psi_k(X_i,x_l)\} \right)^2 dP_m(x_m)dP_l(x_l) \\
    & =  \max_{(j,k)\in[p]^2} \sum_{m=1}^n\sum_{l\neq m} \left\| \sum_{i\neq m,l} \pi_{2,im}\psi_j \star_i^1 \pi_{2,il}\psi_k\right\|_{L^2(P_m\otimes P_l)}^2.
\end{align*}
\end{proof}

\begin{proof}[Proof of \cref{eq:lem5-14-inid}]
From $c_r$ inequality, we have
\begin{align*}
    & \left\| \max_{(j,k)\in[p]^2} \max_{m\in [n]} \sum_{l\neq m} \int (\tilde{\varphi}_{ml,jk}^{(2,2)})^2(X_m,x_l)dP_l(x_l) \right\|_{L^1(\pr)}  \\
    & \le \frac{1}{2} \left\| \max_{(j,k)\in[p]^2}  \max_{m\in [n]} \sum_{l\neq m} \int \left(  \sum_{i:i\neq m,l} P_i\{\pi_{2,im}\psi_j(X_i,X_m)\pi_{2,il}\psi_k(X_i,x_l)\}\right)^2 dP_l(x_l) \right\|_{L^1(\pr)} \\
    & \quad + \frac{1}{2}\left\| \max_{(j,k)\in[p]^2}  \max_{m\in [n]} \sum_{l\neq m} \int \left( \sum_{i:i\neq m,l}   P_i\{\pi_{2,il}\psi_j(X_i,x_l)\pi_{2,im}\psi_k(X_i,X_m)\}\right)^2 dP_l(x_l) \right\|_{L^1(\pr)}.
\end{align*}
Likewise the proof of Lemma 17(c) in \cite{imai2025gaussian}, from Fubini's theorem, we have
\begin{align*}
    & \int \left(  \sum_{i:i\neq m,l} P_i\{\pi_{2,im}\psi_j(X_i,X_m)\pi_{2,il}\psi_k(X_i,x_l)\}\right)^2 dP_l(x_l) \\
    & = \int \left(\sum_{i:i\neq m,l}\int \pi_{2,im}\psi_j(x_i,X_m)\pi_{2,il}\psi_k(x_i,x_l) dP_i(x_i) \right)^2 dP_l(x_l) \\
    & = \int \sum_{i_1: i_1\neq m,l} \sum_{i_2:i_2\neq m,l} \int\int \pi_{2,i_1m}\psi_j(x_{i_1},X_m)\pi_{2,i_1l}\psi_k(x_{i_1},x_l) \\
    & \qquad\qquad\qquad\qquad\qquad\times\pi_{2,i_2m}\psi_j(x_{i_2},X_m)\pi_{2,i_2l}\psi_k(x_{i_2},x_l) dP_{i_1}(x_{i_1})dP_{i_2}(x_{i_2})dP_l(x_l) \\
    & = \sum_{i_1: i_1\neq m,l} \sum_{i_2:i_2\neq m,l}  \int\int (\pi_{2,i_1l}\psi_k \star^1_{l}\pi_{2,i_2l}\psi_k)(x_{i_1},x_{i_2})\pi_{2,i_1m}\psi_j(x_{i_1},X_m)\pi_{2,i_2m}\psi_j(x_{i_2},X_m)dP_{i_1}(x_{i_1})dP_{i_2}(x_{i_2}).
\end{align*}
Then, Schwarz inequality gives
\begin{align*}
    & \sum_{i_1: i_1\neq m,l} \sum_{i_2:i_2\neq m,l}  \int\int (\pi_{2,i_1l}\psi_k \star^1_{l}\pi_{2,i_2l}\psi_k)(x_{i_1},x_{i_2})\pi_{2,i_1m}\psi_j(x_{i_1},X_m)\pi_{2,i_2m}\psi_j(x_{i_2},X_m)dP_{i_1}(x_{i_1})dP_{i_2}(x_{i_2}) \\
    & \le  \sum_{i_1: i_1\neq m,l} \sum_{i_2:i_2\neq m,l}  \|\pi_{2,i_1l}\psi_k \star^1_{l}\pi_{2,i_2l}\psi_k\|_{L^2(P_{i_1}\otimes P_{i_2})}  \left\{ P_{i_1} (\pi_{2,i_1m}\psi_j)^2(X_m)\right\}^{1/2}  \left\{ P_{i_2} (\pi_{2,i_2m}\psi_j)^2(X_m)\right\}^{1/2}.
\end{align*}
Therefore
\begin{align*}
    & \max_{(j,k)\in[p]^2}  \max_{m\in [n]} \sum_{l\neq m} \int \left(  \sum_{i:i\neq m,l} P_i\{\pi_{2,im}\psi_j(X_i,X_m)\pi_{2,il}\psi_k(X_i,x_l)\}\right)^2 dP_l(x_l) \\
    & \le \max_{(j,k)\in[p]^2}  \max_{m\in [n]} \sum_{i_1: i_1\neq m,l} \sum_{i_2:i_2\neq m,l}  \\
    & \qquad\qquad  \left(\sum_{l\neq m}\|\pi_{2,i_1l}\psi_k \star^1_{l}\pi_{2,i_2l}\psi_k\|_{L^2(P_{i_1}\otimes P_{i_2})} \right) \left\{ P_{i_1} (\pi_{2,i_1m}\psi_j)^2(X_m)\right\}^{1/2}  \left\{ P_{i_2} (\pi_{2,i_2m}\psi_j)^2(X_m)\right\}^{1/2} \\
    & \le \left( \max_{(j,k)\in[p]^2} \sum_{m=1}^n\sum_{l\neq m} \left\| \sum_{i\neq m,l} \pi_{2,im}\psi_j \star_i^1 \pi_{2,il}\psi_k\right\|_{L^2(P_m\otimes P_l)}^2\right)^{1/2} \left\| \max_{j\in[p]} \max_{i\in[n]} \sum_{m\neq i} P_m(\pi_{2,im}\psi_j)^2(X_i) \right\|_{L^2(\mathbb P)},
\end{align*}
where the final inequality follows from Schwarz inequality.
\end{proof}

\begin{proof}[Proof of  \cref{eq:lem5-15-inid}]
From $c_r$ inequality, we have
\begin{align*}
    & \left\| \max_{(j,k)\in[p]^2} \max_{(m,l)\in I_{n,2}} (\tilde{\varphi}_{ml,jk}^{(2,2)})^2(X_m,X_l) \right\|_{L^1(\pr)} \\
    & \le \frac{1}{2} \left\| \max_{(j,k)\in[p]^2} \max_{(m,l)\in I_{n,2}} \left(  \sum_{i:i\neq m,l}  P_i\{\pi_{2,im}\psi_j(X_i,X_m)\pi_{2,il}\psi_k(X_i,X_l)\}\right)^2  \right\|_{L^1(\pr)} \\
    & \quad +  \frac{1}{2} \left\| \max_{(j,k)\in[p]^2} \max_{(m,l)\in I_{n,2}} \left(  \sum_{i:i\neq m,l} P_i\{\pi_{2,il}\psi_j(X_i,X_l)\pi_{2,im}\psi_k(X_i,X_m)\}\right)^2  \right\|_{L^1(\pr)} \\
    & = \left\| \max_{(j,k)\in[p]^2} \max_{(m,l)\in I_{n,2}} \left(  \sum_{i:i\neq m,l}  P_i\{\pi_{2,im}\psi_j(X_i,X_m)\pi_{2,il}\psi_k(X_i,X_l)\}\right)^2  \right\|_{L^1(\pr)}\\
    & \le  \left\| \max_{(j,k)\in[p]^2} \max_{(m,l)\in I_{n,2}}   \left(  \sum_{i:i\neq m,l}  P_i(\pi_{2,im}\psi_j)^2(X_m)\right)  \left(  \sum_{i:i\neq m,l}  P_i(\pi_{2,il}\psi_k)^2(X_l)\right)  \right\|_{L^1(\pr)} \\
    & \le  \left\| \max_{j\in[p]} \max_{i\in[n]} \sum_{m\neq i} P_m(\pi_{2,im}\psi_j)^2(X_i) \right\|_{L^2(\mathbb P)}^2,
\end{align*}
where the second follows from Schwarz inequality.
\end{proof}

\begin{proof}[Proof of \cref{eq:lem5-16-inid}]
Recall that $\varphi_{m,jk}^{(2,2),\mathtt{diag}}(X_m)$ is defined as $\varphi_{m,jk}^{(2,2),\mathtt{diag}}(X_m) \coloneqq  \varphi^{(2,2)}_{mm,jk}(X_m,X_m)$.
From \cref{eq:recall_def_varphi22} and $c_r$ inequality, we have
\begin{align*}
    & \max_{(j,k)\in[p]^2} \sum_{m=1}^n \int (\varphi^{(2,2), \mathtt{diag}}_{m,jk})^2(x_m)dP_m(x_m) \\ 
    & \le 2 \max_{(j,k)\in[p]^2} \sum_{m=1}^n  \int \left( \sum_{i\neq m}P_i\{\pi_{2,im}\psi_j(X_i,x_m)\pi_{2,im}\psi_k(X_i,x_m)\}\right)^2 dP_m(x_m) \\
    & \quad + 2 \max_{(j,k)\in[p]^2}  \sum_{m=1}^n  \left(\sum_{i\neq m} P_iP_m\{\pi_{2,im}\psi_j(X_i,X_m)\pi_{2,im}\psi_k(X_i,X_m)\} \right)^2 \\
    & \le 4 \max_{(j,k)\in[p]^2}  \sum_{m=1}^n  \int \left( \sum_{i\neq m} P_i\{\pi_{2,im}\psi_j(X_i,x_m)\pi_{2,im}\psi_k(X_i,x_m)\}\right)^2 dP_m(x_m) \\
    & \le 4 \max_{(j,k)\in[p]^2} \sum_{m=1}^n \int \left(\sum_{i\neq m}P_i(\pi_{2,im}\psi_j)^2(x_m) \right)\left( \sum_{i\neq m}P_i(\pi_{2,im}\psi_k)^2(x_m)\right) dP_m(x_m)  \\
    & \le  4 \left( \max_{j\in[p]} \sum_{m=1}^n \left\| \sum_{i\neq m} P_i(\pi_{2,im}\psi_j)^2(X_m)\right\|_{L^2(P_m)}^2 \right)^{1/2}  \left( \max_{k\in[p]} \sum_{m=1}^n \left\| \sum_{i\neq m} P_i(\pi_{2,im}\psi_k)^2(X_m)\right\|_{L^2(P_m)}^2 \right)^{1/2}   \\
    & \le 4  \max_{j\in[p]} \sum_{i=1}^n \left\| \sum_{m\neq i} P_m(\pi_{2,im}\psi_j)^2(X_i)\right\|_{L^2(P_i)}^2 ,
\end{align*}
where the second inequality follows from Jensen's inequality, the third and the final inequality follows from Schwarz inequality.
\end{proof}

\begin{proof}[Proof of \cref{eq:lem5-17-inid}]
From \cref{eq:recall_def_varphi22} and $c_r$ inequality, we have
\begin{align*}
    & \left\| \max_{(j,k)\in[p]^2} \max_{m\in[n]}  (\varphi^{(2,2), \mathtt{diag}}_{m,jk})^2(X_m)\right\|_{L^1(\pr)}  \\
    & \le 2  \left\| \max_{(j,k)\in[p]^2} \max_{m\in[n]} \left(\sum_{i\neq m} P_i\{\pi_{2,im}\psi_j(X_i,X_m)\pi_{2,im}\psi_k(X_i,X_m)\} \right)^2\right\|_{L^1(\pr)} \\
    & \quad + 2  \max_{(j,k)\in[p]^2} \max_{m\in[n]} \left(\sum_{i\neq m} P_iP_m\{\pi_{2,im}\psi_j(X_i,X_m)\pi_{2,im}\psi_k(X_i,X_m)\} \right)^2,
\end{align*}
where the second inequality follows from Jensen's inequality and Schwarz inequality and the final inequality follows from Schwarz inequality.
In terms of the first term, Schwarz inequality gives
\begin{align*}
    &  2  \left\| \max_{(j,k)\in[p]^2} \max_{m\in[n]} \left(\sum_{i\neq m} P_i\{\pi_{2,im}\psi_j(X_i,X_m)\pi_{2,im}\psi_k(X_i,X_m)\} \right)^2\right\|_{L^1(\pr)} \\
    & \le 2\left\| \max_{(j,k)\in[p]^2} \max_{m\in[n]} \left(\sum_{i\neq m} P_i(\pi_{2,im}\psi_j)^2(X_m) \right)\left(\sum_{i\neq m} P_i(\pi_{2,im}\psi_k)^2(X_m) \right)\right\|_{L^1(\pr)} \\
    & \le  2\left\| \max_{j\in[p]} \max_{m\in[n]} \left(\sum_{i\neq m} P_i(\pi_{2,im}\psi_j)^2(X_m) \right)^2\right\|_{L^1(\pr)} \\
    & =  2\left\| \max_{j\in[p]} \max_{m\in[n]} \sum_{i\neq m} P_i(\pi_{2,im}\psi_j)^2(X_m)\right\|_{L^2(\pr)}^2.
\end{align*}
In terms of the second term, Jensen's inequality gives
\begin{align*}
    & 2  \max_{(j,k)\in[p]^2} \max_{m\in[n]} \left(\sum_{i\neq m} P_iP_m\{\pi_{2,im}\psi_j(X_i,X_m)\pi_{2,im}\psi_k(X_i,X_m)\} \right)^2 \\
    & =  2  \max_{(j,k)\in[p]^2} \max_{m\in[n]} \left(P_m\sum_{i\neq m} P_i\{\pi_{2,im}\psi_j(X_i,X_m)\pi_{2,im}\psi_k(X_i,X_m)\} \right)^2 \\
    & \le 2  \max_{(j,k)\in[p]^2} \max_{m\in[n]} P_m \left(\sum_{i\neq m} P_i\{\pi_{2,im}\psi_j(X_i,X_m)\pi_{2,im}\psi_k(X_i,X_m)\} \right)^2 \\
    & \le 2\max_{j\in[p]} \max_{m\in[n]} P_m \left(\sum_{i\neq m} P_i(\pi_{2,im}\psi_j)^2(X_m)\right)^2 \\
    & \le 2\left\| \max_{j\in[p]} \max_{m\in[n]} \sum_{i\neq m} P_i(\pi_{2,im}\psi_j)^2(X_m)\right\|_{L^2(\pr)}^2.
\end{align*}
\end{proof}

\bibliography{ref}
\bibliographystyle{apalike}
\end{document}